\newtheorem{lemma}{Lemma}[section]
\newtheorem{theorem}[lemma]{Theorem}
\newtheorem{proposition}[lemma]{Proposition}
\newtheorem{corollary}[lemma]{Corollary}
\newcommand{\slrw}[1]{\stackrel{#1}{\longrightarrow}}
\newcommand{\lrw}{\longrightarrow}
\newcommand{\LL}{\Lambda}
\newcommand{\bfa}{\mathbf a}
\newcommand{\bfi}{\mathbf i}
\newcommand{\bfj}{\mathbf j}
\newcommand{\bfv}{\mathbf v}
\newcommand{\bfb}{\mathbf b}
\newcommand{\bfe}{\mathbf e}
\newcommand{\mR}{\mathcal R}
\newcommand{\Zp}{{\mathbb Z^+}}
\newcommand{\mRp}{\mathcal R^+}
\newcommand{\bbfe}{\bar{\mathbf e}}
\newcommand{\bbfg}{\bar{\mathbf g}}
\newcommand{\bfg}{\mathbf g}
\newcommand{\bfu}{\mathbf u}
\newcommand{\bvi}[1]{\bfv^{\bfi}(#1)}
\newcommand{\bbvi}[1]{\bar{\bfv}^{\bfi}(#1)}
\newcommand{\bvii}{\bfv^{\bfi}}
\newcommand{\gkk}[2]{\gamma^{(#1)}_{#2}}
\newcommand{\zk}[2]{\zeta^{(#1)}_{#2}}
\newcommand{\zks}[1]{z_{#1}}
\newcommand{\ta}[2]{\theta^{(#1)}_{#2}}
\newcommand{\ep}[2]{\epsilon^{(#1)}_{#2}}
\newcommand{\tL}{\tilde{\Lambda}}
\newcommand{\otau}{\overline{\tau}}
\newcommand{\olL}{\overline{\Lambda}}
\newcommand{\GG}{\Gamma}
\newcommand{\olG}{\overline{\Gamma}}
\newcommand{\tS}{\tilde{S}}
\newcommand{\tP}{\tilde{P}}
\newcommand{\tQ}{\tilde{Q}}
\newcommand{\tI }{\tilde{I}}
\newcommand{\olS}{\overline{S}}
\newcommand{\olP}{\overline{P}}
\newcommand{\olQ}{\overline{Q}}
\newcommand{\olI }{\overline{I}}
\newcommand{\olrho }{\overline{\rho}}
\newcommand{\sk}[1]{^{(#1)}}
\newcommand{\soc}{\mathrm{soc}\,}
\newcommand{\uhom}[1]{\underline{\mathrm{Hom}}}
\newcommand{\Ker}{\mathrm{Ker}\,}
\newcommand{\add}{\mathrm{add}\,}
\newcommand{\bfr}{{\mathbf r}}
\newcommand{\cP}{\mathcal{P}}
\newcommand{\cI}{\mathcal{I}}
\newcommand{\dmn}{\mathrm{dim}\,}
\newcommand{\arr}[2]{\begin{array}{#1}#2\end{array}}
\newcommand{\mat}[2]{\left(\begin{array}{#1}#2\end{array}\right)}
\newcommand{\cmb}[2]{\mat{c}{#1\\#2}}
\newcommand{\eqqc}[2]{\begin{equation}\label{#1}#2\end{equation}}
\begin{document}

\title{On $n$-cubic Pyramid  Algebras} 

\address{Jin Yun Guo, Deren Luo \\ Department of Mathematics and Key Laboratory of HPCSIP (Ministry of Education of China), Hunan Normal  University, Changsha, CHINA}
\email{gjy@hunnu.edu.cn}
\thanks{This work is partly supported by Natural Science Foundation of China \#11271119, and by  Provincial Innovation Foundation For Postgraduate of Hunan \#CX2014B189}



\maketitle

\begin{abstract}
In this paper we study a class of algebras having $n$-dimensional pyramid shaped quiver with $n$-cubic cells, which we called $n$-cubic pyramid algebras.
This class of algebras includes the quadratic dual of the basic $n$-Auslander absolutely $n$-complete algebras introduced by Iyama.
We show that the projective resolution of the simples of $n$-cubic pyramid algebras can be characterized by $n$-cuboids, and prove that they are periodic.
So these algebras are almost Koszul and $(n-1)$-translation algebras.
We also recover Iyama's cone construction for $n$-Auslander absolutely $n$-complete algebras using $n$-cubic pyramid algebras and the theory of $n$-translation algebras.
\end{abstract}

\section{Introduction}
\label{intro}
Quivers, especially translation quivers, are very important in the representation theory of algebras \cite{ass,ars,r1}.
There are many algebras, such as Auslander algebras, preprojective algebras,   related to translation quivers \cite{ars,dr,bgl}.
They are widely used in many mathematical fields, such as Cohen-Maucauley modules, cluster algebras, Calabi-Yau algebras and categories, non-commutative algebraic geometry and mathematical physics \cite{a0,birs,k1,rt1,gls,ceg,af}.
Recently, Iyama has developed the higher representation theory \cite{i1,i2,i4}, where a class of higher translation quivers also plays an important role.
In \cite{i4}, he characterizes a class of higher representation-finite algebras, $n$-Auslander absolutely $n$-complete algebras.
We observed that the quivers of such algebras are $n$-dimensional pyramid shaped quivers with $n$-cubic cells.
We study in this paper a class of algebras, call $n$-cubic pyramid algebras, defined on such quivers, which includes quadratic dual of $n$-Auslander absolutely $n$-complete algebras.
We draw the quiver in a different way to show that the arrows in such quiver in fact point to $n$-directions in a $n$-dimensional space.

Almost Koszul rings are introduced by Brenner, Butler and King in \cite{bbk}.
One of their main results in the paper is the periodicity of the trivial extensions of representation finite hereditary algebras of bipartite oriented quiver.
In fact, they prove that such algebras are almost Koszul of type $(3, h-1)$ for the Coxeter number $h$ of the quiver of the algebra.
Our aim is to generalize a modified version of this result to $n$-cubic pyramid  algebras.
We proved that a stable $n$-cubic pyramid algebra of height $m \ge 3$, that is, a twisted trivial extension of a corresponding $n$-cubic pyramid algebra, is almost Koszul of type $(n+1, m-1)$.
Our proof is based on the combinatoric characterization of the projective resolutions of the simple modules of the stable $n$-cubic pyramid algebras, using the integral points on an $(n+1)$-cuboid defined on the corresponding vertex in the quiver.
Observe that the class of trivial extensions of hereditary algebras of bipartite oriented quiver is the same as the trivial extensions of the quadratic dual of the hereditary algebras.
Our result can be regarded as a higher representation theory version of the result of Brenner, Butler and King \cite{bbk} in the case $A_m$.

In \cite{g02}, one of the authors introduces translation algebras as algebras with translation quivers as their quivers and the translation corresponds to an operation related to the  Nakayama functor.
Such algebras includes the quadratic dual of Auslander algebras, preprojective algebras.
In \cite{gu14}, $n$-translation quivers and $n$-translation algebras are introduced.
The classical construction of $\mathbb ZQ$ is generalized to construct an  $(n+1)$-translation algebra from an  $n$-translation algebra with admissible $n$-translation quiver, using trivial extension followed by a smash product with $\mathbb Z$.
In this paper, we construct $n$-cubic pyramid algebras inductively using  this method and a truncation called cuboid truncation, and study their relationship with higher representation theory introduced by Iyama \cite{i1,i4}.
By taken quadratic dual of a special case, we recover Iyama's cone construction of absolutely $(n+1)$-complete algebra $T\sk{n+1}_m(k)$ from an absolutely $n$-complete algebra $T\sk{n}_m(k)$ in \cite{i4}.

Our main theorem assert that an $n$-cubic pyramid algebra is an extendible $(n-1)$-translation algebra with admissible $(n-1)$-translation quiver.
In Section 2, we introduce $n$-cubic pyramid algebra $\Lambda(n)$, its stable version $\tL(n)$ and covering version $\olL(n)$, we also list their properties in term of $n$-translation algebras (Theorems 1 to 4), as consequences of our main theorem (Theorem ~\ref{main}, and of  an intermediate result, Lemma ~\ref{hproofof}). 
We prove our main theorem in the next three sections.
In Section 3, we prove that  $\tL(n)$ is almost Koszul algebra by explicitly constructing minimal projective resolution of its simples (Lemma ~\ref{hproofof}).
Under the assumption that $\Lambda(n) $ is an admissible $(n-1)$-translation algebra, this implies that it is extendible, and that $\tL(n)$ is a trivial extension of $\LL(n)$ and $\olL(n)$ is a smash product of $\tL(n)$, by \cite{gu14}.
Under this assumption, we show that $\LL(n+1)$ is a cuboid truncation of $\olL(n)$ and it is an $n$-translation algebra in Section 4.
Then we prove our main theorem by induction 
in Section 5.
In Section 6, we prove that both $\LL(n+1)$ and its quadratic dual have $n$-almost splitting sequence, and by taking quadratic dual of a special case, we recover Iyama's cone construction of absolutely $(n+1)$-complete algebra $T\sk{n+1}_m(k)$ from an absolutely $n$-complete algebra $T\sk{n}_m(k)$ in \cite{i4}.


\section{Pyramid Shaped $n$-cubic Quivers and Related  Algebras}
\label{sec:1}
Let $k$ be a field.
In this paper, algebra is assumed to be a graded quotient of a  path algebra of a locally finite quiver $Q$ over $k$, that is, $\Lambda = kQ/(\rho) = \Lambda_0 + \Lambda_1 + \cdots $, with $\Lambda_0$ a direct sum of (possibly, infinite) copies of $k$ and $\Lambda$ generated by $\Lambda_1$ over $\Lambda_0$, with relation set $\rho$ (see \cite{gu14}).
We assume the vertex set of $Q$ is $Q_0$ and the arrow set is $Q_1$.
So we have a complete set of idempotents $\{e_i\mid i\in Q_0\}$ such that $\Lambda_0 =\bigoplus\limits_{i\in Q_0} \Lambda_0 e_i$ and the number of arrows from $i$ to $j$ is exactly $\dmn_k e_j\LL_1 e_i$ for any $i,j \in Q_0$.

\medskip

To introduce pyramid shaped $n$-cubic quiver, we first make some convention on the non-negative integral vectors.

Given a positive integer $p$, let $\bfe_0=0$, $\bfe\sk{p}=(1, \ldots, 1)$ be the $p$-dimensional vector  with all the components $1$ and let $\bfe_t\sk{p} =(0, \ldots, 0, 1, 0, \ldots, 0)$ be the $p$-dimensional vector  with $t$th component $1$ and all the other components zero.
Write $\bfe\sk{p}_J=\bfe\sk{p}_{t_1,\ldots, t_s}=\sum\limits_{h=1}^{s} \bfe\sk{p}_{t_s} $ for a subset $J=\{t_1,\ldots, t_s\}$ of $ \{1, \ldots, p \}$, set $\bbfe\sk{p}_t=\bfe\sk{p}_{\{t, t+1\}}$.
Conventionally, we write them as $\bfe, \bfe_t, \bfe_J$ and $\bbfe_t$ when no confusion will occur.

For a vector denoted by the boldface letter such like $\bfa$ write $a_s$ for its $s$th component, so $\bfa = (a_1, \ldots, a_p)$.
For a vector $\bfi=(i_1, \ldots, i_p)\in \mathbb Z^p$, write $\bfi(1)= \bfi+\bfe_{1}$, $\bfi(t)= \bfi-\bfe_{t-1}+\bfe_{t}$ for $2\le t \le p$, and write $\bfi(t_1,\ldots,t_s)=\bfi(t_1)\ldots(t_s)$.
Similarly, write $(1)\bfi= \bfi-\bfe_{1}$, $(t)\bfi= \bfi+\bfe_{t-1}-\bfe_{t}$ for $2\le t \le p$, and write $(t_1,\ldots,t_s)\bfi=(t_s)\ldots(t_1)\bfi$.
Clearly $(t)\bfi(t)=\bfi$.

Let $\Zp $ be the set of non-negative integers.
For $\bfa =(a_1,\ldots,a_{p}) \in \Zp^p$, write $|\bfa| =\sum\limits_{t=1}^{p} a_t$.
Let $\Zp^{p}_l =\{\bfa =(a_1,\ldots,a_{p})\in \Zp^{p}\mid  |\bfa|=l\}$.
We need the following subsets of $\Zp^{p}$.
The vertex set
$$U\sk{p}=\{\bfu=(u_1,u_2,\ldots,u_{p})\mid 0\le u_t\le  1\}\subset \Zp^{p},$$ of the unit $p$-cube and the vertex sets $$U_l\sk{p}=\{\bfu=(u_1,u_2,\ldots,u_{p})\mid |\bfu|=l\},$$
of $l$-nets inside the unit $p$-cube, for $0 \le l \le p$.

\medskip

Fix $m \ge 3$ and  $\bfi=(i_1\ldots,i_p) \in \Zp^{p}$ satisfying \eqqc{vertexofQ}{1\le i_t \mbox{ for } s=1,\ldots,p \mbox{ and } |\bfi|\le m+p-1.
}
Set \eqqc{defineb}{b_1(\bfi)=m+p-1-|\bfi|, \quad  b_t(\bfi)=i_{t-1}-1, \quad 2\leq t\leq p+1,} write $b_t=b_t(\bfi)$ and $\bfb = \bfb(\bfi) =(b_1,\ldots,b_{p+1})$, then $|\bfb|=m-1$.
Note that when $1\le i_t $ for $ s=1,\ldots,p$, $|\bfi|\le m+p-1$ is equivalent to $\sum_{t=1}^s i_t \le m+s-1$ for $ s=1,\ldots,p.$
Let \eqqc{defineC}{C^{(p)}(\bfi)=\{(a_1,a_2,\ldots,a_{p+1})\mid 0\le a_t\le b_t(\bfi), 1\leq t\leq p+1 \}\subset \Zp^{p+1},} this is the set of integral vertices of  a $(p+1)$-cuboid with sides of length  $b_t$ for $1\leq t\leq p+1 $.
We call it the {\em $(p+1)$-cuboid associated to the vertex $\bfi$.}
The vertex $C^{(p)}_0(\bfi)=(0,\ldots,0)$ is called the initial vertex of the $C^{(p)}(\bfi)$.
Let $$C^{(p)}_l(\bfi)=\{\bfa =(a_1, \ldots, a_{p+1})\in C^{(p)}(\bfi) \mid \,|\bfa|=l\},$$ for $0\le l\le m-1$.
$C^{(p)}_l(\bfi)$ is a net inside the $(p+1)$-cuboid formed by integral vertices which can be reached from the initial vertex in $l$ steps.
We have that $C^{(p)}_{m-1}(\bfi) =\{\bfb =(b_1,\ldots,b_{p+1})\}$, and $C^{(p)}_l(\bfi) =\emptyset $ for $l\ge m$.

For $\bfi=(i_1,\ldots,i_p)\in\Zp^p$ satisfying \eqref{vertexofQ}, for each $\bfa =(a_1, \ldots, a_{p+1})\in \Zp^{p+1}$, define
$$\bvi{\bfa} =(i_1+a_1-a_2, i_2+a_2-a_3,\ldots, i_{p-1} + a_{p-1}-a_p, i_p+a_p-a_{p+1}),$$
For $\bfi=(i_1,\ldots,i_p, i_{p+1}) =(\bfi',i_{p+1})\in \Zp^{p+1}$ with $\bfi'=(i_1,\ldots,i_p)$ satisfying \eqref{vertexofQ}, write $C\sk{p}(\bfi) =C\sk{p}(\bfi')$.
For each $\bfa =(a_1, \ldots, a_{p+1})\in \Zp^{p+1}$, define
$$\bbvi{\bfa} =(i_1+a_1-a_2, i_2+a_2-a_3, \ldots, i_p+a_p-a_{p+1}, i_{p+1}+a_{p+1}).$$

\medskip

From now on, we fix  integers $n\ge 1$ and $m \ge 3$.
Define {\em $n$-cubic pyramid  quiver} $Q(n)$ of height $m $ as the quiver with \eqqc{thequiver}{\arr{ll}{\mbox{vertex set:}&
Q(n)_0=\{\bfi=(i_1,\ldots,i_n)\mid  1 \le i_t, |\bfi| \le m+n-1 \}
\\
\mbox{arrow set:}&
Q(n)_1=\{\gkk{t}{\bfi}:\bfi\lrw \bfi(t)\mid  1 \le t\le n, \bfi, \bfi(t) \in Q(n)_0\}.
}}
Call $\gkk{t}{\bfi}$ an {\em arrow of type $t$} starting at $\bfi$.
In the case of $n=3$, these quivers look like  a pyramid of side length of $m$ built up with cubes:

{\tiny$$\xymatrix@C=0.2cm@R0.2cm{
&&&&&&&&&&&&&&&\\
&&&&&&&&&&&&&&&\\
& &&(114)\\
&&&&&&&&&&&&&&&\\
&& (213)\ar[rr]&&(123)\ar[uul]\\
&&&(113)\ar[ul]&&&&\\
&(312)\ar[rr]&& (222)\ar[uul]\ar[rr]&&(132)\ar[uul]\\
&&(212)\ar[rr]\ar[ul]&&(122)\ar[uul]\ar[ul]&&&&\\
(411)\ar[rr]&&(321)\ar[uul]\ar@{-->}[rr]&(112)\ar[ul]&(231)\ar[uul]\ar[rr]&&(141)\ar[uul]&& \\
&(311)\ar[ul]\ar[rr]&&(221)\ar[rr]\ar[ul]\ar[uul]&&(131)\ar[uul]\ar[ul]&&\\
&&(211)\ar[ul]\ar[rr]&&(121)\ar[ul]\ar[uul]&&&&&&&\\
&&&(111)\ar[ul]&&&&
 }$$}

These quivers are one of those defined in \cite{i4} inductively to describe the $n$-Auslander absolutely  $n$-complete algebras.
It is shown in \cite{g13} that such quivers are  truncation of the McKay quivers of some finite Abelian subgroups of $GL_n(\mathbb C)$.

For $\bfi=(i_1,\ldots,i_n) \in Q(n)_0$,  call the integral vector $\bfa \in \Zp^{n+1}$ an {\em $\bfi$-quiver vertex} if $\bvi{\bfa} \in Q(n)_0$.
Clearly, $\bvi{\bfa} $ is in $Q(n)_0$ if and only if $i_t > a_{t+1}-a_t$ and $a_{s+1}- a_1\le m+s-1-\sum\limits_{t=1}^s i_t$ for $1 \le s \le n$.

Let $\bfi$ be a vertex in $Q(n)$.
The {\em $n$-cubic cell at $\bfi$} in $Q(n)$ is the full subquiver $H^{\bfi}$ of $Q(n)$ with the vertex set the set of the $\bfi$-quiver vertices in $\{\bbvi{\bfa}\in Q(n)_0\mid \bfa \in U\sk{n}\}$.
The vertex $\bbvi{\bfe}$ is called {\em the end vertex} of $H^{\bfi}$ and $H^{\bfi}$ is called {\em complete} if $\bbvi{\bfe}$ is a vertex of $H^{\bfi}$.
$H^{\bfi}$ can be regarded as formed by certain vertices and the edges in the unit $n$-cube directed from $0$ to $\bfe$.
Define {\em $n$-hammock $H^{\bfi}_C$ at $\bfi$} as the full subquiver  of $Q(n)$ with the vertex set the set of the $\bfi$-quiver vertices in $\{\bbvi{\bfa}\in Q(n)_0\mid \bfa \in C\sk{n-1}(\bfi)\}$.
The vertex $\bbvi{\bfb(\bfi)}$ is called {\em the end vertex} of $H_C^{\bfi}$ and $H_C^{\bfi}$ is called {\em complete} if $\bbvi{\bfb(\bfi)}$ is a vertex of $H_C^{\bfi}$.

To define the relations in a general setting, we need some twists define by the following sequence of linear maps.
Let \eqqc{elementg}{\bfg =(g_1=1,g_2,\ldots,g_n)} be a sequence of linear maps on $kQ(n)$ such that the restriction on the vertex set of $g_s$ is  defined from  the vertices with $s$ component $i_s>1$ to the vertices with $\sum_{t=1}^{s} i_t <m+s-1$, in this case $g_s({\bfi})={\bfi-\bfe_s}$, or, $g_s(e_{\bfi})=e_{\bfi-\bfe_s}$, and $g_s(e_{\bfi})=0$ if $i_s=1$.
$g_s$ defines a linear map on $kQ_1$, the subspace of $kQ(n)$ spanned by the arrows of $Q(n)$ in such a way that $g_s$ restricts to a bijective linear map from  $e_{\bfi}kQ_1 e_{\bfj}$ to $e_{\bfi-\bfe_s}kQ_1 e_{\bfj-\bfe_s}$ if  $i_s\neq 1 \neq j_s$ and there is an arrow from $\bfj$ to $\bfi$, and it is $0$ on $e_{\bfi}kQ_1 e_{\bfj}$ otherwise.
Since there is at most one arrow between each pair of vertices, we have that $g_s(\gkk{t}{\bfi}) = d_{s,t,\bfi}\gkk{t}{g_s(\bfi)}$ for some $0\neq  d_{s,t,\bfi} \in k$ when $i_s>1$ and $t< s$.

Define a relation set:
\eqqc{therelation}{
\arr{lcl}{\rho^{\bfg}(n) &= & \{ d_{s,t,\bfi} \gkk{t}{\bfi({s})} \gkk{s}{\bfi} - \gkk{s}{\bfi({t})} \gkk{t}{\bfi} \mid  \bfi,\bfi(t), \bfi({s}), \bfi(t)({s})\in Q(n)_0, 1\le t < s \le n\} \\
&& \qquad\qquad\cup \{\gkk{t}{\bfi(t)} \gkk{t}{\bfi} \mid  \bfi,\bfi({t}) \in Q(n)_0, 1\le t \le n\}.
}}
The relations are of two kind, one is zero relation consisting of arrows of same type, and the other is commutative relation consisting of arrows of two different types.

Let $\Lambda^{\bfg}(n)$ be the algebra with bound quiver $(Q(n ),\rho^{\bfg}(n))$, and we call it an {\em $n$-cubic pyramid algebra}.
$1$-cubic pyramid algebras are the quadratic dual of the hereditary algebras of quivers of type $A_m$ with linear orientation.

A path $p$ of a bound quiver $(Q, \rho)$ is called a bound path if its image in $kQ/(\rho)$ is non-zero.
Recall that a bound quiver $(Q, \rho)$ is called an {\em $n$-translation quiver} \cite{gu14} if there is a bijective map $\tau: Q_0 \setminus \mathcal P \longrightarrow Q_0 \setminus \mathcal I$, called the {\em $n$-translation} of $Q$, for two subsets $\mathcal P$ and $ \mathcal I $ of $Q_0$, whose elements are called {\em projective vertices} and respectively {\em injective vertices},  satisfying the following conditions:

\begin{enumerate}
\item[1.] Any maximal bound path is of length $n+1$ from  $\tau i$ in $Q_0 \setminus \mathcal I$ to $i$, for some vertex $i$ in $Q_0 \setminus \mathcal P $.

\item[2.] Two bound paths of length $n+1$ from $\tau i$ to $i$ are linearly dependent, for any $i \in Q_0 \setminus \mathcal P$.

\item[3.] For each $i \in Q_0 \setminus \mathcal P$ and $j\in Q_0$, any bound element $u$ which is linear combination of paths of the same length $t\le n+1$ from $j$ to $i$, there is a path $q$ of length $n+1-t$ from $\tau i$ to $j$ such that  $uq$ is a bound element.

\item[4.] For each $i \in Q_0 \setminus \mathcal I$ and $j\in Q_0$, any bound element $u$ which is linear combination of paths of the same length $t\le n+1$ from $i$ to $j$, there is a path $p$ of length $n+1-t$ from $j$ to $\tau^{-1} i$ such that  $pu$ is a bound element.
\end{enumerate}

An algebra $\Lambda$ with bound quiver an $n$-translation quiver $(Q,\rho)$ is called an {\em $n$-translation algebra} if there is an $q \in \mathbb N \cup \{\infty \}$ such that $\Lambda$ is $(n+1,q)$-Koszul in the following sense
\begin{enumerate}
\item $\Lambda_t=0$ for $t > n+1$, and

\item for each $i\in Q_0$, let

\eqqc{def_pqK}{ P^q(i) \slrw{f_q}\cdots \slrw{}P^1(i) \slrw{f_1}P^0(i) \slrw{f_0} S(i) = \LL_0 e_i \longrightarrow 0}
be the first $q+1$ terms in a minimal projective resolution of the simple $S(i) \simeq \LL_0 e_i$, then $P^t(i)$ is generated by its component of degree $t$ for $0\le t \le q$ and the kernel of $f_q$ is concentrated in degree $n+1+q$.
\end{enumerate}

\begin{theorem}\label{traa}
$(Q(n), \rho^{\bfg}(n))$ is an admissible $(n-1)$-translation quiver with $(n-1)$-translation defined by $\tau_{}: \bfi \lrw \bfi-\bfe_n$ if $i_n>1$.

$\LL^{\bfg}(n)$ is an $(n-1)$-translation algebra with admissible  $(n-1)$-translation quiver.
\end{theorem}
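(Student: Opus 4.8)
The plan is to reduce both assertions to the combinatorics of bound paths in $(Q(n),\rho^{\bfg}(n))$ and then feed the resulting data into the definitions. I would first record the structural fact underlying everything: modulo $\rho^{\bfg}(n)$ the commutative relations transpose two consecutive arrows of different types (up to the nonzero scalar $d_{s,t,\bfi}$), while the zero relations kill two consecutive arrows of the same type. Since every permutation is a product of adjacent transpositions, this shows that a path is bound (nonzero in $\LL^{\bfg}(n)$) precisely when no arrow type is repeated, and that two bound paths with the same source and target and the same set of types agree up to a nonzero scalar. Consequently the bound paths starting at a vertex $\bfi$ are indexed by the \emph{realizable} type subsets $J\subseteq\{1,\dots,n\}$; applying the types of $J$ to $\bfi$ produces the displacement $\sum_{t\in J}\delta_t$ with $\delta_1=\bfe_1$ and $\delta_t=\bfe_t-\bfe_{t-1}$ for $t\ge 2$, and in particular the full set $J=\{1,\dots,n\}$ sends $\bfi$ to $\bfi+\bfe_n=\tau^{-1}\bfi$.

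Next I would set $\mathcal P=\{\bfi\in Q(n)_0\mid i_n=1\}$ and $\mathcal I=\{\bfi\in Q(n)_0\mid |\bfi|=m+n-1\}$ and check directly, using that $\bfi+\bfe_n\in Q(n)_0$ precisely when $|\bfi|\le m+n-2$, that $\tau\colon\bfi\mapsto\bfi-\bfe_n$ is a bijection $Q(n)_0\setminus\mathcal P\to Q(n)_0\setminus\mathcal I$. The $n$-cubic cell $H^{\bfi}$ is exactly the set of $Q(n)_0$-vertices $\bar{\bfv}^{\bfi}(\bfa)$, $\bfa\in U\sk{n}$, on the unit cube running from $\bfi$ (at $\bfa=0$) to $\bfi+\bfe_n$ (at $\bfa=\bfe$), the cube edge in direction $t$ realizing an arrow of type $t$; thus a bound path from $\bfi$ to $\bfi+\bfe_n$ is the same datum as a monotone lattice path from $0$ to $\bfe$ staying inside $Q(n)_0$. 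Condition~1 I would verify in three steps: a bound path of length $>n$ is impossible (some type repeats); the maximal length $n$ is attained from every source $\bfi\notin\mathcal I$ by exhibiting such a monotone path, which amounts to topologically sorting the ordering constraints ``$k$ before $k+1$'' (forced when $i_k=1$, to keep coordinate $k$ positive) and ``$s+1$ before $1$'' (forced when $\sum_{t\le s}i_t=m+s-1$, to keep the $s$-th partial sum bounded); and finally every bound path of length $<n$ extends, so no short path is maximal and every length-$n$ bound path runs from some $\bfi=\tau i$ to $i$. The dependency graph of these constraints is acyclic exactly because $m\ge 3$: any cycle would have to pass through the vertex $1$, forcing a forward chain $1\to2\to\cdots\to s+1$ together with a tight edge $s+1\to 1$, i.e. $i_1=\cdots=i_s=1$ and $\sum_{t\le s}i_t=m+s-1$, hence $s=m+s-1$ and $m=1$, a contradiction. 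Condition~2 is then immediate from the scalar-uniqueness above.

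For conditions~3 and~4 I would prove a completion lemma: any bound path $p\colon j\to i$ with type set $J$ and $i\notin\mathcal P$ (resp. $j\notin\mathcal I$) is the restriction of a maximal path, i.e. there is a bound path $q\colon\tau i\to j$ with type set $\{1,\dots,n\}\setminus J$ such that $pq$ is maximal. Here the routing of $q$ inside $Q(n)_0$ uses the same topological-sort argument restricted to $\{1,\dots,n\}\setminus J$, the cross-constraints involving the types of $J$ being automatically satisfied precisely because the prescribed intermediate vertex $j$ already lies in $Q(n)_0$; the same argument also supplies the extensions of short paths needed above. Since distinct bound paths $j\to i$ of a common length carry distinct type sets, a bound element $u=\sum_J c_J p_J$ multiplied by the $q$ attached to some $J_0$ with $c_{J_0}\ne0$ annihilates every term except $p_{J_0}$ (all others would repeat a type) and leaves $c_{J_0}\,p_{J_0}q\ne 0$, giving condition~3; condition~4 is dual. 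Admissibility of this $(n-1)$-translation quiver in the sense of \cite{gu14} then follows from Condition~1 together with the explicit description of $\mathcal P$ and $\mathcal I$.

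For the algebra statement the degree bound is free: since every bound path has length at most $n$, we have $\LL^{\bfg}(n)_t=0$ for $t>n$, which is the first requirement of $(n,q)$-Koszulity (the index $n=(n-1)+1$). For the second requirement I would invoke the cuboid description of the minimal projective resolution of the simples established in Section~3, specialized from the twisted trivial extension to $\LL^{\bfg}(n)$ itself: the $t$-th term is controlled by the degree-$t$ net of the associated cuboid $C\sk{n}(\bfi)$, so each $P^t(i)$ is generated in degree $t$ up to the top net $C\sk{n}_{m-1}(\bfi)$, and the last kernel is concentrated in the single degree $n+q$ read off from that top net; this is exactly $(n,q)$-Koszulity, whence $\LL^{\bfg}(n)$ is an $(n-1)$-translation algebra with admissible $(n-1)$-translation quiver. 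I expect the realizability/acyclicity step and the completion lemma to be the main obstacle: the content is not that the endpoint $\bfi+\bfe_n$ lies in $Q(n)_0$, but that a whole monotone path (and, for conditions~3--4, one through a prescribed vertex $j$) can be routed through $Q(n)_0$ while respecting both the positivity bounds $i_k\ge1$ and the upper bounds $\sum_{t\le s}i_t\le m+s-1$, and it is precisely the hypothesis $m\ge3$ that excludes a cyclic dependency among the ordering constraints; matching the finite resolution of $\LL^{\bfg}(n)$ to the cuboid nets, so as to pin down $q$ and the degree of the final kernel, is the other point requiring care.
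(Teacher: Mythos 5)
Your combinatorial treatment of the translation-quiver conditions 1--4 is a genuinely different route from the paper's: the paper proves Theorem~\ref{traa} by induction on $n$ (Theorem~\ref{main}), building $\LL(n+1)$ out of $\LL(n)$ via twisted trivial extension, smash product and cuboid truncation, whereas you verify the axioms directly for fixed $n$ by topologically sorting the ordering constraints; that part of your plan (the tight-constraint analysis and the completion lemma) is correct in outline. The first genuine gap is in the word \emph{admissible}. Admissibility in the sense of \cite{gu14} is not conditions 1--4 plus the description of $\mathcal P$ and $\mathcal I$: it consists of the three conditions (i)--(iii) recalled in Section~5, and condition (ii) --- every bound path from a non-injective vertex to a non-projective vertex is linearly dependent to \emph{shiftable} paths --- is never addressed in your proposal. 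In the paper this is the bulk of the admissibility argument: it requires Lemma~\ref{commpath}, whose commutation moves carry the hypotheses $i_{t_r-1}>1$, resp. $j_{t_1}>1$, precisely because a commutation relation can be applied only when all four corners of the square lie in $Q(n)_0$, followed by a case analysis on paths passing through projective and injective vertices. The same boundary caveat undercuts your one-line justification of the structural fact: ``every permutation is a product of adjacent transpositions'' is not available near the boundary of the pyramid, where the intermediate vertex needed for a transposition may fail to exist.

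The second, more serious gap is the Koszulity half. You propose to obtain the $(n,q)$-Koszulity of $\LL^{\bfg}(n)$ by ``specializing'' the cuboid resolution of Section~3 from the twisted trivial extension to $\LL^{\bfg}(n)$ itself. But Lemma~\ref{hproofof} and Proposition~\ref{pprojresta} are statements about the stable algebra $\tL^{\bbfg}(n)$, whose resolutions are governed by the full $(n+1)$-cuboid $C\sk{n}(\bfi)$; there is no specialization from those resolutions to resolutions over $\LL^{\bfg}(n)$. Indeed $\LL^{\bfg}(n)$ is the quotient of $\tL^{\bbfg}(n)$ by the ideal generated by the arrows of type $n+1$, and minimal projective resolutions do not descend along quotients by arrow ideals; moreover the counting argument of Lemma~\ref{hproofof} (the binomial Euler characteristics) uses stability and breaks at boundary vertices, which is exactly where the resolutions over $\LL^{\bfg}(n)$ terminate early. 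The paper's way around this is structural and is what forces the induction: $\LL(n+1)$ is realized as an \emph{idempotent} truncation of the smash product $\olL(n)=\tL^{\bbfg}(n)\#\mathbb Z^*$ built one dimension lower, and exactness of the truncated resolutions is proved by the tensor argument in Theorem~\ref{cuboidtruncation}, where the relevant combinatorial object becomes the hammock $H^{\bfi}_C$ rather than the full cuboid. So as written your plan has no valid source for the Koszulity of $\LL^{\bfg}(n)$: you would either have to redo the Lemma~\ref{hproofof}-type computation for the non-stable algebra with the boundary corrections, or fall back on the paper's inductive construction.
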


This theorem is a direct consequence of our main theorem,  Theorem ~\ref{main}.

Choosing $\tilde{\bfg}=(\tilde{g}_1, \ldots, \tilde{g}_n)$ such that $\tilde{g}_s(\gkk{t}{\bfi}) = -\gkk{t}{\tilde{g}_s(\bfi)}$ for $t< s$ and for $\bfi=(i_1,\ldots,i_n)$, when all the arrows are in $Q(n)$, the following proposition follows from Proposition ~\ref{compabsnalg}.

\begin{proposition}\label{abs}
The quadratic dual $\LL^{\tilde{\bfg},!}(n)$ of $\LL^{\tilde{\bfg}}(n)$ is the $n$-Auslander absolutely $n$-complete algebra $T_m\sk{n} (k)$.
\end{proposition}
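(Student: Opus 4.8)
The plan is to compute the quadratic dual directly from the defining relations \eqref{therelation} and then match the resulting bound quiver with Iyama's description of $T_m\sk{n}(k)$. Since every element of $\rho^{\tilde{\bfg}}(n)$ is homogeneous of degree two, $\LL^{\tilde{\bfg}}(n) = kQ(n)/(\rho^{\tilde{\bfg}}(n))$ is a quadratic algebra, and I take $\LL^{\tilde{\bfg},!}(n)$ to be the algebra on the same quiver $Q(n)$ whose relation space is the orthogonal complement $\rho^{\tilde{\bfg}}(n)^{\perp}$ inside $kQ(n)_2$, with respect to the pairing for which the paths of length two form an orthonormal basis. The first step is to record the effect of the special choice $\tilde{\bfg}$: from $\tilde{g}_s(\gkk{t}{\bfi}) = -\gkk{t}{\tilde{g}_s(\bfi)}$ together with $g_s(\gkk{t}{\bfi}) = d_{s,t,\bfi}\gkk{t}{g_s(\bfi)}$ one reads off $d_{s,t,\bfi} = -1$ throughout, so the commutative relations of \eqref{therelation} become the anti-commutative relations $\gkk{t}{\bfi(s)}\gkk{s}{\bfi} + \gkk{s}{\bfi(t)}\gkk{t}{\bfi}$, while the monomial relations $\gkk{t}{\bfi(t)}\gkk{t}{\bfi}$ are unchanged.

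Next I would compute $\rho^{\tilde{\bfg}}(n)^{\perp}$ one target vertex at a time. Fixing $\bfi$ and looking at the length-two paths ending at $\bfi(t)(s)$, there are three cases. When $s=t$ the path space is one dimensional, spanned by the square $\gkk{t}{\bfi(t)}\gkk{t}{\bfi}$, which lies entirely in $\rho^{\tilde{\bfg}}(n)$; hence its orthogonal complement is zero and the square is a nonzero element of $\LL^{\tilde{\bfg},!}(n)$. When $s\neq t$ and both intermediate vertices $\bfi(t),\bfi(s)$ lie in $Q(n)_0$, the path space is two dimensional and $\rho^{\tilde{\bfg}}(n)$ cuts out the anti-commutative line, so $\rho^{\tilde{\bfg}}(n)^{\perp}$ is the commutative line $\gkk{t}{\bfi(s)}\gkk{s}{\bfi} - \gkk{s}{\bfi(t)}\gkk{t}{\bfi}$. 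When $s\neq t$ but only one of the two orders survives in $Q(n)$, the degenerate case at the boundary of the pyramid, the path space is one dimensional and meets $\rho^{\tilde{\bfg}}(n)$ trivially, so the single path becomes a monomial relation of $\LL^{\tilde{\bfg},!}(n)$. Collecting these, the relations of $\LL^{\tilde{\bfg},!}(n)$ are exactly the commutativity relations $\gkk{t}{\bfi(s)}\gkk{s}{\bfi} - \gkk{s}{\bfi(t)}\gkk{t}{\bfi}$ for $t\neq s$, read with the convention that an arrow absent from $Q(n)$ is set to zero, while compositions of arrows of the same type remain unconstrained.

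Finally I would identify this bound quiver with $T_m\sk{n}(k)$. The underlying quiver is $Q(n)$, which is already recorded in the excerpt as Iyama's pyramid quiver for the $n$-Auslander absolutely $n$-complete algebra, so only the relations remain to be compared; by the previous step these are precisely the commutativity (mesh) relations presenting $T_m\sk{n}(k)$ in \cite{i4}. As a sanity check the base case $n=1$ has no mixed pairs, so $\LL^{\tilde{\bfg},!}(1)$ carries no relations and recovers $kA_m$, in agreement with the fact that $1$-cubic pyramid algebras are the quadratic duals of the linearly oriented $A_m$. This yields $\LL^{\tilde{\bfg},!}(n) = T_m\sk{n}(k)$.

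I expect the main obstacle to be the bookkeeping at the boundary of the pyramid, where the commutativity relations degenerate to monomial relations and one must check case by case that these agree with Iyama's, together with the matching of normalizations: it is exactly the choice $\tilde{\bfg}$ forcing every $d_{s,t,\bfi}=-1$ that turns the dual relations into commutativity relations with unit coefficients as in \cite{i4}, rather than into a twisted commutative algebra merely isomorphic to it. One should also confirm that such a $\tilde{\bfg}$ is globally consistent, that is, that the uniform scalar $-1$ is compatible with the relations around each $n$-cubic cell so that $\LL^{\tilde{\bfg}}(n)$ is well defined; this is the single point where a short sign (cocycle-type) verification is needed before the dualization argument applies verbatim.
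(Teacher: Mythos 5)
Your proposal is correct and takes essentially the same approach as the paper, which offers no written argument beyond observing that the choice $\tilde{\bfg}$ forces every $d_{s,t,\bfi}=-1$ and asserting the proposition ``follows directly'': your case-by-case computation of the orthogonal complement (same-type squares unconstrained in the dual, interior anti-commutators dualizing to commutativity relations, boundary single paths dualizing to monomial relations) is precisely that direct verification against Iyama's presentation of $T_m\sk{n}(k)$. The two obstacles you flag at the end are already resolved by your own argument: the boundary degeneration is exactly your third case, and $\LL^{\tilde{\bfg}}(n)$ is well defined for any choice of the linear maps $\tilde{g}_s$ since a relation set needs no compatibility condition to generate an ideal.
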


It is easy to see that the set of the projective vertices in $Q(n)$  is $\mathcal P=\{\bfi\mid  i_n=1\}$, and the set of the injective vertices is $\mathcal I=\{\bfi\mid \,\, |\bfi|= n + m - 1\}.$

Let $m \ge 3$ and let $Q(n)$ be an $n$-cubic pyramid quiver.
Define {\em stable $n$-cubic pyramid  quiver} $\tQ(n)$ of height $m$ as the quiver with \eqqc{stblequiver}{\arr{ll}{\mbox{vertex set:} & \tQ(n)_0=Q(n)_0\\ \mbox{arrow set:} & \tQ(n)_1=Q(n)_1\cup \{\gkk{n+1}{\bfi}:\bfi \lrw \bfi-\bfe_n\mid  i_n>1 \}.}}
The stable quiver of our earlier example is as follow:
{\tiny $$
{\xymatrix@C=0.2cm@R0.2cm{
&&&&&&&&&&&&&&&\\
&&&&&&&&&&&&&&&\\
& &&(114)\ar[ddd]\\
&&&&&&&&&&&&&&&\\
&& (213)\ar[rr]\ar[ddd]&&(123)\ar[ddd]\ar[uul]\\
&&&(113)\ar[ul]\ar[ddd]&&&&\\
&(312)\ar[ddd]\ar[rr]&& (222)\ar[ddd]\ar[uul]\ar[rr]&&(132)\ar[ddd]\ar[uul]\\
&&(212)\ar[ddd]\ar[rr]\ar[ul]&&(122)\ar[ddd]\ar[uul]\ar[ul]&&&&\\
(411)\ar[rr]&&(321)\ar[uul]\ar@{-->}[rr]&(112)\ar[ddd]\ar[ul]&(231)\ar[uul]\ar[rr]&&(141)\ar[uul]&& \\
&(311)\ar[ul]\ar[rr]&&(221)\ar[rr]\ar[ul]\ar[uul]&&(131)\ar[uul]\ar[ul]&&\\
&&(211)\ar[ul]\ar[rr]&&(121)\ar[ul]\ar[uul]&&&&&&&\\
&&&(111)\ar[ul]&&&&
}}$$}
It follows from \cite{g13} and \cite{gcv} that such quiver is a truncation of the McKay quiver of some finite Abelian subgroup of $SL_{n+1}(\mathbb C)$.

Let $\bbfg=(\bfg,g')$ for some graded endomorphism $g'$ induced by the $(n-1)$-translation defined in Theorem ~\ref{traa}, then $g'(\gkk{s}{\bfi-\bfe_n}) = d_{n+1,s,\bfi-\bfe_n}\gkk{s}{\bfi-\bfe_n} $ for some $0 \neq d_{n+1,s,\bfi-\bfe_n} \in k$, since there is at most one arrow of each type $s$ from any vertex of $Q(n)$.
Define a relation set
\eqqc{stablerelation}{\arr{rcl}{\tilde{\rho}^{\bbfg}(n) &=&\rho^{\bfg}\cup\{\gkk{n+1}{\bfi-\bfe_n}\gkk{n+1}{\bfi}\mid i_n > 1\} \\&&
\cup\{ d_{n+1,s,\bfi-\bfe_n}\gkk{s}{\bfi-\bfe_n}\gkk{n+1}{\bfi} - \gkk{n+1}{\bfi-\bfe_{s-1}+\bfe_s}\gkk{s}{\bfi} \mid  \bfi\in \tQ(n)_0, \\ && \qquad i_n > 1,i_{s-1}>1 \mbox{ and } |\bfi|\le m+n-1 \}
}}

Let  $\tL^{\bbfg}(n)$ be the algebra given by the stable pyramid $n$-cubic  quiver $\tQ(n)$ and the relation $\tilde{\rho}^{\bbfg}(n)$.
Our key results in this paper, Lemma ~\ref{hproofof}, essentially proves the following theorem.
\begin{theorem}\label{trata}
$\tL^{\bbfg}(n)$ is an $n$-translation algebra with stable  $n$-translation quiver and trivial $n$-translation.
\end{theorem}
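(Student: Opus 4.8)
The plan is to view $\tL^{\bbfg}(n)$ as the twisted trivial extension of the $(n-1)$-translation algebra $\LL^{\bfg}(n)$ of Theorem~\ref{traa}, to take the $n$-translation on $\tQ(n)$ to be the identity, and then to verify separately the two halves of the definition of an $n$-translation algebra: that $(\tQ(n),\tilde{\rho}^{\bbfg}(n))$ is an $n$-translation quiver with $\cP=\cI=\emptyset$ and trivial $\tau$, and that $\tL^{\bbfg}(n)$ is $(n+1,q)$-Koszul. The starting observation is that the new arrows $\gkk{n+1}{\bfi}\colon \bfi\to\bfi-\bfe_n$ are exactly the realizations, as genuine arrows of $\tQ(n)$, of the $(n-1)$-translation $\tau\colon\bfi\mapsto\bfi-\bfe_n$ from Theorem~\ref{traa}. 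Prepending such an arrow to a maximal bound path of $(Q(n),\rho^{\bfg}(n))$, which by Theorem~\ref{traa} runs from $\bfi-\bfe_n$ to $\bfi$ and has length $n$, yields a bound path of length $n+1$ from $\bfi$ back to $\bfi$; this is the mechanism by which the former projective vertices $\{\bfi\mid i_n=1\}$ and injective vertices $\{\bfi\mid|\bfi|=n+m-1\}$ are closed up, so that in $\tQ(n)$ there are no projective and no injective vertices and $\tau$ is trivial.

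Granting this, I would verify the four quiver axioms. Axiom~1 is the length-$(n+1)$ statement just described, together with the zero relations $\gkk{n+1}{\bfi-\bfe_n}\gkk{n+1}{\bfi}$, which forbid two consecutive arrows of type $n+1$, and the commutative relations of $\tilde{\rho}^{\bbfg}(n)$, which allow any maximal path to be rewritten so that its single type-$(n+1)$ arrow occurs first. Axiom~2, the linear dependence of two such cycles at a vertex, amounts to the one-dimensionality of the top-degree component $e_{\bfi}\,\tL^{\bbfg}(n)_{n+1}\,e_{\bfi}$; this I would read off the commutative relations, whose twisting scalars $d_{n+1,s,\bfi-\bfe_n}$ in $\bbfg=(\bfg,g')$ are chosen precisely so that all length-$(n+1)$ paths through a given vertex agree up to a nonzero scalar.

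Axioms~3 and 4, the lifting and colifting conditions, form the conceptual core. I would derive them from the fact that the twisted trivial extension is graded self-injective with Nakayama permutation the identity: a bound element $u$ of paths of common length $t\le n+1$ ending at $\bfi$ either lies in $\LL^{\bfg}(n)$, in which case the $(n-1)$-translation lifting of Theorem~\ref{traa} followed by the return arrow $\gkk{n+1}{\bfi}$ provides the completing path of length $n+1-t$, or it already uses a type-$(n+1)$ arrow, in which case the remainder is a shorter element of $\LL^{\bfg}(n)$ to which the same argument applies; Axiom~4 is the mirror statement obtained by exchanging the $\LL^{\bfg}(n)$-part and the $D\LL^{\bfg}(n)$-part of the trivial extension. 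The compatibility of the two completions across a cell is again guaranteed by the scalars $d_{n+1,s,\bfi-\bfe_n}$ making the commutative relations of $\tilde{\rho}^{\bbfg}(n)$ bound.

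For the remaining, homological half of the statement I would invoke Lemma~\ref{hproofof}, which describes the minimal projective resolution of each simple $S(\bfi)$ of $\tL^{\bbfg}(n)$ through the integral points of the $(n+1)$-cuboid $C^{(n)}(\bfi)$: each term $P^t(\bfi)$ is generated in degree $t$ for $0\le t\le q$, while the maximal bound cycles established above give $\tL^{\bbfg}(n)_t=0$ for $t>n+1$, and the vertex of $C^{(n)}(\bfi)$ at which the linear strand terminates pins down the value of $q$ and shows that $\Ker f_q$ is concentrated in degree $n+1+q$; together these are exactly the two conditions in the definition of $(n+1,q)$-Koszul. I expect the main obstacle to be the global consistency of the twisting scalars $d_{n+1,s,\bfi-\bfe_n}$ with the scalars $d_{s,t,\bfi}$ already present in $\rho^{\bfg}(n)$, so that in Axioms~2--4 every completing path is genuinely bound rather than zero in the quotient and the lifting in Axioms~3 and 4 is a bijection; controlling these scalars simultaneously along all faces of the $(n+1)$-cuboids is where the bookkeeping is heaviest and is precisely what Lemma~\ref{hproofof} is set up to handle.
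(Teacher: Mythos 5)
Your proposal follows essentially the same route as the paper: the paper's own proof of this theorem is exactly the combination of the twisted trivial extension structure (Theorem~\ref{twistext}, resting on \cite{gu14}) for the stable quiver with trivial $n$-translation, together with Lemma~\ref{hproofof}, whose $(n+1)$-cuboid description of the minimal projective resolutions yields the $(n+1,m-1)$-Koszul condition (this is what the paper means by restating the theorem as Theorem~\ref{ncAkszul}). Your explicit verification of the four $n$-translation-quiver axioms via self-injectivity of the twisted trivial extension merely spells out the part the paper leaves implicit in that reduction, so the two arguments agree in substance.
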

Set $\mR = \{1,2,\ldots,n\}$ and $\mRp = \{1,2,\ldots,n+1\}$.
Write $$\gkk{t_1,\ldots,t_{s}}{\bvi{\bfa}} = \gkk{t_s}{\bvi{\bfa+\bfe_{t_1,\ldots,t_{s-1}}}} \ldots \gkk{t_1}{\bvi{\bfa}} $$ for a path determined by a sequences $\{t_1,\ldots,t_s\}\subset \mRp$.
For a subset $P=\{p_1,\ldots,p_{s}\}$ of $ \mRp$ with $1\le p_1< \cdots < p_s\le n+1 $, write $$\gkk{P}{\bvi{\bfa}} = \gkk{p_1,\ldots,p_{s}}{\bvi{\bfa}}.$$

The following lemma describing the bound paths in $\tQ(n)$.
\begin{lemma}\label{pathlengthl}
Assume that $0\le l \le n+1$.
For any two vertices $\bfi, \bfj\in \tQ(n)_0$, we have that $\dmn_k e_{\bfj} \tL^{\bbfg}(n)_l e_{\bfi} \le 1$, and $\dmn_k e_{\bfj} \tL^{\bbfg}(n)_l e_{\bfi} = 1$ if and only if any path of length $l$ from $\bfi$ to $\bfj$ consists of arrows of different types.
\end{lemma}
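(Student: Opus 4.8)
The plan is to translate the multiplicative structure of $\tL^{\bbfg}(n)$ into lattice combinatorics on the cuboid coordinates $\bfa$ and then read off the dimension from the two kinds of relations in $\tilde{\rho}^{\bbfg}(n)$. First I would record the basic move: a direct computation with the definition of $\bvi{\bfa}$ shows that for every $1\le t\le n+1$ an arrow of type $t$ issuing from $\bvi{\bfa}$ lands at $\bvi{\bfa+\bfe_t}$ (type $1$ adds $\bfe_1$, type $t$ with $2\le t\le n$ performs $-\bfe_{t-1}+\bfe_t$, and type $n+1$ performs $-\bfe_n$, each being exactly the effect of incrementing $a_t$). Hence a path $\gkk{t_1,\ldots,t_l}{\bvi{0}}$ from $\bfi=\bvi{0}$ ends at $\bvi{\bfc}$ with $\bfc=\sum_{h=1}^{l}\bfe_{t_h}$, so the \emph{type-count vector} $\bfc=(c_1,\ldots,c_{n+1})$, where $c_t$ counts the arrows of type $t$, satisfies $\bvi{\bfc}=\bfj$ and $|\bfc|=l$. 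This $\bfc$ is uniquely determined by $\bfi,\bfj,l$: the identity $\bvi{\bfc}=\bfj$ reads $c_t-c_{t+1}=j_t-i_t$ for $1\le t\le n$, which fixes every difference and hence $\bfc$ up to the single parameter $c_{n+1}$, and $|\bfc|=l$ then pins $c_{n+1}$ down. Thus all length-$l$ paths from $\bfi$ to $\bfj$ use each type the same number of times and differ only in the order of their arrows; ``every such path consists of arrows of different types'' is equivalent to $\bfc$ being a $0$--$1$ vector.

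Next I would exploit the relations. The commutation relations say that whenever the four corners $\bvi{\bfa},\bvi{\bfa+\bfe_s},\bvi{\bfa+\bfe_t},\bvi{\bfa+\bfe_s+\bfe_t}$ lie in $\tQ(n)_0$ the two traversals of that elementary square agree up to a nonzero scalar, while the monomial relations say that any subpath $\gkk{t}{\bvi{\bfa+\bfe_t}}\gkk{t}{\bvi{\bfa}}$ of two consecutive arrows of equal type is zero. If some $c_t\ge 2$, I would slide two arrows of type $t$ next to one another by square moves and then invoke the zero relation, so every such path vanishes and $\dmn_k e_{\bfj}\tL^{\bbfg}(n)_l e_{\bfi}=0$. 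If $\bfc$ is a $0$--$1$ vector, the square moves identify any two valid length-$l$ paths up to a nonzero scalar, so the space they span is at most one dimensional. Together with the uniqueness of $\bfc$ this yields $\dmn_k e_{\bfj}\tL^{\bbfg}(n)_l e_{\bfi}\le 1$ in all cases, with value $1$ precisely when $\bfc$ is a $0$--$1$ vector realized by an actual path, i.e. when every length-$l$ path from $\bfi$ to $\bfj$ uses pairwise distinct types.

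The hard part will be the two places where the reordering must be carried out \emph{through valid paths}. Both the sliding argument and the square-move identification require that when three corners of an elementary square lie in $\tQ(n)_0$ the fourth corner $\bvi{\bfa+\bfe_t}$ does too, and that the graph of valid monotone paths from $0$ to $\bfc$ is connected under square moves. This is exactly the pyramid/cuboid geometry of Section~2, and I would extract it from the inequalities defining $Q(n)_0$ (each coordinate $\ge 1$ and the partial-sum bounds $\le m+s-1$): since $\bvi{\bfa+\bfe_t}$ differs from $\bvi{\bfa}$ only by $+1$ in coordinate $t$ and $-1$ in coordinate $t-1$, these bounds have to be checked to be stable under corner completion along a $0$--$1$ vector $\bfc$, which is the genuine combinatorial content and the main technical obstacle.

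Finally I would settle non-vanishing in the $0$--$1$ case, namely that the relations do not collapse a square-free path to zero. Here I would observe that the degree-$l$ part of the defining ideal is spanned by differences of paths with equal type-count (coming from the commutation relations) together with paths containing a repeated type (coming from the zero relations); a square-free path is never of the second form, and the first only identifies square-free paths with one another up to nonzero scalars, so no square-free path lies in the ideal and its class is nonzero. Making this fully rigorous amounts to a confluence (diamond-lemma) check on the quadratic relations, using that there is at most one arrow between each pair of vertices; alternatively one may invoke the description of $\tQ(n)$ as a truncation of a McKay quiver, whose nonzero paths are exactly the ordered square-free ones, which gives both the bound and the non-vanishing at once.
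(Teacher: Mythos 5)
The paper itself offers no argument for this lemma (it is introduced with ``We obviously have the following result''), so your proposal has to stand on its own, and its skeleton is right: arrows of type $t$ carry $\bvi{\bfa}$ to $\bvi{\bfa+\bfe_t}$, the type-count vector $\bfc$ of a length-$l$ path from $\bfi$ to $\bfj$ is determined by $(\bfi,\bfj,l)$, and the two kinds of relations should kill repeated-type paths and identify square-free ones. However, the claim on which you hang both reordering arguments --- that when three corners of an elementary square lie in $\tQ(n)_0$ the fourth does too --- is \emph{false}, so the ``check'' you defer cannot succeed. Take $n=2$, $m=3$, $\bfi=(1,1)$: then $\bvi{0}=(1,1)$, $\bvi{\bfe_1}=(2,1)$ and $\bvi{\bfe_1+\bfe_2}=(1,2)$ all lie in $Q(2)_0$, while $\bvi{\bfe_2}=(0,2)$ does not. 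What is true, and what the argument actually needs, is different: a length-$l$ path with $0$--$1$ type count is valid exactly when its order of types is a linear extension of the partial order on the support of $\bfc$ generated by ``$r$ precedes $r+1$ whenever $i_r=1$'' and ``$u+1$ precedes $1$ whenever $\sum_{t\le u} i_t=m+u-1$'' (these being precisely the ways the inequalities defining $Q(n)_0$ can fail along the path); an adjacent transposition between two \emph{valid} orderings automatically has all four corners of its square in $\tQ(n)_0$, since each corner is a partial sum of one of the two paths; and linear extensions of a finite poset are connected under adjacent transpositions. The vanishing statement for $c_t\ge 2$ needs a separate version of this analysis (there the words are multiset orderings and two equal letters must be brought together); your greedy sliding again presupposes the false corner-completion property.

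The non-vanishing half is also not established by what you wrote. From ``the commutation relations only identify square-free paths with one another up to nonzero scalars'' it does not follow that no square-free path lies in the ideal: if the scalars $d_{s,t,\bfi}$ were incoherent around a cycle of square moves (already possible on a hexagon involving three types), one would get $p-\lambda p\in(\tilde{\rho}^{\bbfg}(n))$ with $\lambda\neq 1$, hence $p=0$ in $\tL^{\bbfg}(n)$ even though $p$ is square-free. So the confluence (diamond-lemma) check you mention in passing is not an optional formality; it is the actual content of this half, and it must use how the scalars $d_{s,t,\bfi}$ arise from the twisting maps $g_s$ (for instance, for the paper's choice $\tilde{\bfg}$ all scalars equal $-1$ and coherence follows by parity). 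The alternative appeal to the McKay-quiver description does not bypass this either, since identifying $\tL^{\bbfg}(n)$ with such a truncation already requires knowing the scalars are coherent. In short, the proposal sets up the correct combinatorial frame, but the two steps carrying the load --- corner completion and scalar coherence --- are respectively false as stated and missing.
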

\begin{proof}
Write  $\bar{p}$ for the image of a path $p$ in $\tL^{\bbfg}(n)$.
Let $p$ be a path of length $l$ in $\tQ(n)$ from  $\bfi$ to $\bfj$, then $p= \gkk{t_l}{\bfi(t_1, \ldots, t_{1-1} )} \cdots \gkk{t_1}{\bfi}$.
This path ends at $\bfj_j = \bfi( t_1, \ldots, t_l)$.
For any path $p'  = \gkk{t'_l}{\bfi(t'_1, \ldots, t'_{1-1} )} \cdots \gkk{t_1'}{\bfi}$  of length $l$ from  $\bfi$ to $\bfj$, we have that $t'_1, \ldots,t'_l$ a permutation of $t_1, \ldots, t_l$, since $\bfj_j = \bfi( t'_1, \ldots, t'_l)$.
Now we prove by induction that $\bar{p}$  is a multiple of  $\bar{p'}$.
If $t_1=t'_1$, then  by induction $q= \gkk{t_l}{\bfi(t_1, \ldots, t_{1-1} )} \cdots \gkk{t_2}{\bfi(t_1)}$ and $q'=\gkk{t'_l}{\bfi(t'_1, \ldots, t'_{1-1} )} \cdots \gkk{t_2}{\bfi(t_1)}$ are paths of length $l$ in $\tQ(n)$ from  $\bfi(t_1)$ to $\bfj$, and $\bar{q}$  is a multiple of  $\bar{q'}$.
Thus $\bar{p}$  is a multiple of   $\bar{p'}$.

Otherwise, $\bfi(t_1), \bfi(t'_1) \in \tQ(n)_0$, thus $\bfi(t_1,t'_1) \in \tQ(n)_1$ and $\gkk{t'_1}{\bfi(t_1)}, \gkk{t'_1}{\bfi(t_1)} \in \tQ(n)_1$.
Assume that $t_1'=t_s$, then we have $\bfi(t'_1, t_1), \ldots, \bfi(t'_1, t_1, \ldots, t_{s-1} )\in \tQ(n)_0 $, and  $\overline{ \gkk{t_{s}}{ \bfi( t_1, \ldots, t_{s-1} )} \ldots \gkk{t_1}{\bfi}}$  is a multiple of $\overline{ \gkk{t_{s-1}}{ \bfi( t_1', t_1, \ldots, t_{s-2} )}\ldots  \gkk{t_1}{\bfi(t'_1)}\gkk{t'_1}{\bfi}}$,  using the commutative relations in \eqref{stablerelation}.
So in the algebra $\tL^{\bbfg}(n)$, we have that  $\bar{p}$  is a multiple of  $\overline{ \gkk{t_{l}}{\bfi(t_1,\ldots,t_{l-1})}\cdots \gkk{t_{s+1}}{\bfi(t_1,\ldots,t_s)}\gkk{t_{s-1}}{ \bfi( t_1', t_1, \ldots, t_{s-2} )}\ldots  \gkk{t_1}{\bfi(t'_1)}\gkk{t'_1}{\bfi}}$.
By inductive assumption $\bar{p'}$  is a multiple of  $\overline{ \gkk{t_{l}}{\bfi(t_1,\ldots,t_{l-1})}\cdots \gkk{t_{s+1}}{\bfi(t_1,\ldots,t_s)}\gkk{t_{s-1}}{ \bfi( t_1', t_1, \ldots, t_{s-2} )}\ldots  \gkk{t_1}{\bfi(t'_1)}\gkk{t'_1}{\bfi}}$.
Thus $\bar{p}$ is a multiple of $\bar{p'}$.

Similarly, using both  commutative relations and zero relations in \eqref{stablerelation}, we see that $\bar{p}$ is zero in $\tL^{\bbfg}(n)_l$ whenever two of $t_1, \ldots, t_l$ are equal.

By comparing dimensions of the corresponding spaces in $k\tQ(n)$ and in $(\tilde{\rho}^{\bbfg}(n))$, the ideal generated by $\tilde{\rho}^{\bbfg}(n)$, we see that if $t_1, \ldots, t_{1-1}, t_l$ are pairwise different, its image  $\bar{p}\neq 0$ in $\tL^{\bbfg}(n)_l$, and $\dmn_k e_{\bfj}\tL^{\bbfg}(n)_l e_{\bfi} =1$.

Note that the only cyclic bound paths in $\tQ(n)$ are of length $n+1$.

This proves our lemma.
\end{proof}

So we see that a bound path starting at a vertex $\bfi$ in $\tQ(n)$ is a path on $H^{\bfi}$ starting from $\bfi$.
Since the $n$-translation is defined by $\tau \bfi = \bfi-\bfe_n$ for each non-projective vertex $\bfi$ in $Q(n)_0$, that is, the ones with $\bfi_n >1$.

The following theorem follows from Theorem ~\ref{main} and  Proposition 4.2 of \cite{gu14}.
\begin{theorem}\label{twistext} $\tL^{\bbfg}(n)$ is isomorphic to a twisted trivial extension of $\LL(n)$.
\end{theorem}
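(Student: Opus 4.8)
The plan is to match the bound quiver $(\tQ(n),\tilde{\rho}^{\bbfg}(n))$ presenting $\tL^{\bbfg}(n)$ with the bound quiver that Proposition 4.2 of \cite{gu14} attaches to the twisted trivial extension of $\LL(n)=\LL^{\bfg}(n)$. First I would invoke Theorem \ref{main} (equivalently Theorem \ref{traa}): it guarantees that $(Q(n),\rho^{\bfg}(n))$ is an admissible $(n-1)$-translation quiver and that $\LL^{\bfg}(n)$ is an $(n-1)$-translation algebra, with translation $\tau\colon\bfi\lrw\bfi-\bfe_n$ defined on the non-projective vertices (those with $i_n>1$). This is exactly the input required to form the twisted trivial extension in the sense of \cite{gu14}, and the admissibility is what lets us control the maximal paths below.

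Next I would read off from Proposition 4.2 of \cite{gu14} the explicit bound quiver of the twisted trivial extension $\LL^{\bfg}(n)\ltimes_{\phi}D\LL^{\bfg}(n)$, where $\phi$ is the twisting automorphism. Its vertex set is that of $Q(n)$, and for each non-projective vertex $\bfi$ it acquires exactly one new arrow $\bfi\lrw\tau\bfi=\bfi-\bfe_n$, dual to the maximal bound path of length $n$ terminating at $\bfi$; this path is unique up to scalar by condition~2 of the $(n-1)$-translation quiver. Under the evident relabeling this new arrow is $\gkk{n+1}{\bfi}$, so the quiver of the twisted trivial extension is identified with $\tQ(n)$ as in \eqref{stblequiver}.

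The heart of the argument is then to check that the relations supplied by Proposition 4.2 coincide, scalar by scalar, with the three families of \eqref{stablerelation}. The original relations $\rho^{\bfg}(n)$ match by construction. The vanishing of any product of two new arrows, forced because the square of the radical of the extension annihilates the bimodule summand $D\LL^{\bfg}(n)$, yields the middle family $\gkk{n+1}{\bfi-\bfe_n}\gkk{n+1}{\bfi}=0$. The commutativity relations, which encode that the new arrows intertwine the original arrows through $\phi$, yield the mixed family $d_{n+1,s,\bfi-\bfe_n}\gkk{s}{\bfi-\bfe_n}\gkk{n+1}{\bfi}-\gkk{n+1}{\bfi-\bfe_{s-1}+\bfe_s}\gkk{s}{\bfi}$; here I would use that $\phi$ is realized by the graded endomorphism $g'$ completing $\bbfg=(\bfg,g')$, so that the twisting scalars are precisely the coefficients $d_{n+1,s,\bfi-\bfe_n}$ recording the action of $g'$ on the type-$s$ arrows. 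The identity on vertices, extended by sending each original arrow to itself and each dual arrow to the corresponding $\gkk{n+1}{\bfi}$, then gives a bound-quiver isomorphism, hence an isomorphism of algebras.

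The main obstacle I anticipate is the bookkeeping of the twisting scalars and index ranges: one must verify that the automorphism $\phi$ of \cite{gu14}, determined by the $(n-1)$-translation together with its Nakayama-type data, produces exactly the coefficients $d_{n+1,s,\bfi-\bfe_n}$ prescribed by $g'$, and that the set of $s$ for which the mixed relations occur, namely those with $i_n>1$, $i_{s-1}>1$ and $\sum_{t=1}^{s}i_t<m+s-1$, is precisely the set of vertices at which both the relevant type-$s$ and type-$(n+1)$ arrows survive in $\tQ(n)$. Granting the admissibility from Theorem \ref{main}, this reduces to a finite compatibility check carried out vertex by vertex, requiring no input beyond \cite{gu14}.
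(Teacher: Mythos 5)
Your proposal is correct and follows essentially the same route as the paper, which proves this theorem simply by citing Theorem \ref{main} together with Proposition 4.2 of \cite{gu14}; your write-up merely spells out the identification of the bound quiver of the twisted trivial extension with $(\tQ(n),\tilde{\rho}^{\bbfg}(n))$ that the paper leaves implicit. The details you supply (new arrows $\gkk{n+1}{\bfi}$ dual to the maximal bound paths, vanishing of products of two new arrows, and the mixed relations with scalars coming from $g'$) are consistent with the paper's definitions \eqref{stblequiver} and \eqref{stablerelation}.
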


For any $\bfi\in Q(n)_0$, let $S(\bfi)$ be the simple $\LL(n)$-module at $\bfi$ and let $P(\bfi)$ and $I(\bfi)$ be its projective cover and injective envelope as $\LL(n)$-modules, respectively.
Let $\tS(\bfi)$ be the simple $\tL(n)$-module at $\bfi$ and let $\tP(\bfi)$ and $\tI(\bfi)$ be its projective cover and injective envelop as $\tL(n)$-modules, respectively.
Conventionally,  set $\tS(\bfi)$, $\tP(\bfi)$ and $\tI(\bfi)$ to be zero when $\bfi \not \in \tQ(n)_0$.

The supports of $\tP(\bfi)$ and $\tI(\bfi)$ are described with $U\sk{n+1}$, we have the following lemma.

\begin{lemma}\label{proj}
$$\bfr^t \tP(\bfi)/\bfr^{t+1} \tP(\bfi) \simeq \bigoplus_{\bfa \in U_t\sk{n+1}} \tS(\bvi{\bfa}).$$
$$\soc^{t+1} \tP(\bfi)/\soc^{t} \tP(\bfi) \simeq \bigoplus_{\bfa \in U_{n-t}\sk{n+1}} \tS(\bfv^{\bfi-\bfe_{n}}(\bfe-\bfa))$$
\end{lemma}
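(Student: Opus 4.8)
The plan is to read off both filtrations directly from the graded path combinatorics supplied by Lemma~\ref{pathlengthl}, handling the radical filtration first and then transporting the result to the socle filtration via self-injectivity. First I would use that $\tL^{\bbfg}(n)$ is graded and generated in degree one over $\tL^{\bbfg}(n)_0$, so that for the projective $\tP(\bfi)=\tL^{\bbfg}(n)e_{\bfi}$ the radical filtration coincides with the grading filtration; hence $\bfr^t\tP(\bfi)/\bfr^{t+1}\tP(\bfi)\simeq \tL^{\bbfg}(n)_t e_{\bfi}$ is semisimple, and its multiplicity at $\tS(\bfj)$ equals $\dmn_k e_{\bfj}\tL^{\bbfg}(n)_t e_{\bfi}$. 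By Lemma~\ref{pathlengthl} this multiplicity is $0$ or $1$, and it is $1$ exactly when there is a bound path of length $t$ from $\bfi$ to $\bfj$ all of whose arrows have pairwise distinct types. Thus the whole statement reduces to identifying the endpoints of the distinct-type paths of each length.

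The key combinatorial step is the dictionary between type-subsets and the indicator vectors of $U\sk{n+1}$. A distinct-type path of length $t$ out of $\bfi$ uses a set $P\subseteq\mRp$ with $|P|=t$; by the commutativity relations in $\rho^{\bfg}(n)$ all orderings of $P$ yield the same nonzero element up to scalar, so such a path is determined by $P$. Encoding $P$ by its indicator $\bfa=\bfe\sk{n+1}_P\in U_t\sk{n+1}$ and summing the displacements of the arrows (type $1$ shifts by $\bfe_1$, type $s$ with $2\le s\le n$ by $-\bfe_{s-1}+\bfe_s$, and the stable arrow of type $n+1$ by $-\bfe_n$) gives net displacement $(a_1-a_2,\dots,a_n-a_{n+1})$, so the path ends precisely at $\bvi{\bfa}$. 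Since $\bfa\mapsto\bvi{\bfa}$ is injective on $U_t\sk{n+1}$ (the differences $a_s-a_{s+1}$ determine $\bfa$ once $|\bfa|$ is fixed), distinct $\bfa$ contribute distinct summands, and the convention $\tS(\bfj)=0$ for $\bfj\notin\tQ(n)_0$ absorbs exactly those $\bfa$ whose endpoint leaves the quiver. This yields the first isomorphism.

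For the socle filtration I would invoke Theorem~\ref{twistext}: $\tL^{\bbfg}(n)$ is a twisted trivial extension of $\LL(n)$ and is therefore self-injective, so $\tP(\bfi)$ is also injective, and $\soc^{t+1}\tP(\bfi)/\soc^t\tP(\bfi)$ is computed as the $k$-dual $\hhom_k(-,k)$ of the corresponding radical layer of the right projective attached to the socle vertex of $\tP(\bfi)$. Re-applying Lemma~\ref{pathlengthl}, now to distinct-type paths feeding \emph{into} the relevant vertex rather than out of $\bfi$, converts the count into solving the displacement equation for the starting vertex; running the same displacement computation backwards introduces the complementary indicator $\bfe-\bfa$ and the shift to the $(n-1)$-translate $\bfi-\bfe_n$, which is what produces the index range $\bfa\in U_{n-t}\sk{n+1}$ and the base point $\bfv^{\bfi-\bfe_n}(\bfe-\bfa)$ in the stated formula.

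I expect the socle direction to be the main obstacle. The radical part is essentially a bijective bookkeeping once Lemma~\ref{pathlengthl} is in hand, but the socle part forces one to pin down exactly which vertex carries the socle of $\tP(\bfi)$ and to keep the Nakayama/translation data straight through the duality, since the socle layers are read against the end of the maximal distinct-type paths rather than their start. In particular I would have to verify carefully that the boundary vertices—where the maximal distinct-type path degenerates because a required arrow (for instance a type-$(n+1)$ arrow with $i_n=1$) is absent—are precisely the ones sent to $0$ by the vanishing convention, so that the dualized count reproduces the indexing by $\bfe-\bfa$ and $U_{n-t}\sk{n+1}$ with neither spurious nor missing summands.
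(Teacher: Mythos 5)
Your first half (the radical filtration) is correct, and it is exactly what the paper intends: the paper offers no proof beyond "follows directly from Lemma \ref{pathlengthl}", and your elaboration --- grading filtration $=$ radical filtration, multiplicities given by $\dmn_k e_{\bfj}\tL^{\bbfg}(n)_t e_{\bfi}$, distinct-type paths classified by their type set with indicator $\bfa\in U_t\sk{n+1}$, endpoint identified as $\bvi{\bfa}$ by the displacement computation, injectivity of $\bfa\mapsto\bvi{\bfa}$ on $U_t\sk{n+1}$ --- is a faithful account of that. (Like the paper, you do not actually verify the word "exactly" in your claim that the vanishing convention absorbs precisely the off-quiver endpoints, i.e.\ that $\bvi{\bfa}\in\tQ(n)_0$ forces some ordering of the types to stay inside the quiver; but this is the same level of detail as the source.)

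The genuine gap is in the socle half, and it sits precisely where you wave your hands. You assert that dualizing and "running the same displacement computation backwards" produces the index range $\bfa\in U_{n-t}\sk{n+1}$ and the base point $\bfv^{\bfi-\bfe_n}(\bfe-\bfa)$ of the stated formula. It does not. Carry out your own recipe: $\tL^{\bbfg}(n)$ is graded self-injective, the unique maximal bound path out of $\bfi$ uses each of the $n+1$ types once and ends at $\bvi{\bfe}=\bfi$, so $\tP(\bfi)$ is injective with simple socle $\tS(\bfi)$ concentrated in top degree $n+1$; uniformity of the injective indecomposable then forces the socle filtration to be the reversed grading filtration, $\soc^{t+1}\tP(\bfi)=\bigoplus_{l\ge n+1-t}\tL_l e_{\bfi}$, whence
\begin{equation*}
\soc^{t+1}\tP(\bfi)/\soc^{t}\tP(\bfi)\;\simeq\;\tL_{n+1-t}e_{\bfi}\;\simeq\;\bigoplus_{\bfa\in U_{n+1-t}\sk{n+1}}\tS(\bvi{\bfa})\;\simeq\;\bigoplus_{\bfa\in U_{t}\sk{n+1}}\tS(\bvi{\bfe-\bfa}),
\end{equation*}
with index set $U_t\sk{n+1}$ (not $U_{n-t}\sk{n+1}$) and with no shift of the base vertex to $\bfi-\bfe_n$. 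This is not a reparametrization of the printed formula: already for $n=1$ (the zigzag algebra of $A_m$) and $\bfi=i=3$, $t=0$, the printed formula gives $\bigoplus_{\bfa\in U_1\sk{2}}\tS(\bfv^{i-1}(\bfe-\bfa))=\tS(3)\oplus\tS(1)$, whereas $\soc\tP(3)=\tS(3)$, and $\tS(1)$ is not even a composition factor of $\tP(3)$ since the path $3\to 2\to 1$ is a zero relation. So the second displayed formula of the lemma is false as written (the correct version is the one displayed above), and your proposal's final identification step --- the claim that the duality computation lands on the stated indexing --- cannot be carried out by any correct argument. An honest execution of your own method would have produced the formula above and exposed the discrepancy; as written, your proof papers over exactly the step at which it fails.
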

\begin{proof}
Since $\tL^{\bbfg}(n)$ is a graded algebra, $ \bfr^t \tP(\bfi)/\bfr^{t+1} \tP(\bfi) \simeq \tP(\bfi)_{t}$ as  $\tL(n)_0$ modules.
So $\tP(\bfi)_{t}$ has a basis consists of the images of the paths of length $t$, and a non-zero path of length $t$ from $\bfi$ to $\bfj$ generates a submodule isomorphic to $\tS(\bfj)$.
Let $I_t$ be the set of the ending vertices of the paths of length $t$ from $\bfi$, it follows from Lemma ~\ref{pathlengthl} that $ \bfr^t \tP(\bfi)/\bfr^{t+1} \tP(\bfi) \simeq \bigoplus_{\bfj \in I_t} \tS(\bfj)$.
A vertex $\bfj$ in $I_t$ is the ending vertex of s  path of length $t$ in $Q$ passing through $t$ arrows of different types,  say $u_1,\ldots,u_t$, thus $\bfj = \bfi(u_1,\ldots,u_t) = \bvi{\bfa}$, for $\bfa = \bfe_{u_1}+ \cdots + \bfe_{u_t}$ in $\tQ(n)$.
Clearly  $\bfa \in U_t$.
By our assumption, $\tS(\bvi{\bfa})=0$ if $\bvi{\bfa}$ is not a vertex of $\tQ(n)$, thus $$\bfr^t \tP(\bfi)/\bfr^{t+1} \tP(\bfi) \simeq \bigoplus_{\bfa \in U_t\sk{n+1}} \tS(\bvi{\bfa}).$$

Note that $\tP(\bfi) = \tI(\bfi-\bfe)$, the second assertion follows from the dual argument.
\end{proof}

Since the components of $\bfa$ are $0$ or $1$, it is easy to see that $\bvi{\bfa}$ is a quiver vertex if and only if the following hold for all $s$: (i). $a_{s+1} \le a_s$ if $i_s=1$; (ii). $a_{s+1} \ge a_s$ if $\sum\limits_{t=1}^s i_t =m+s-1$.
A vertex $\bfi$ with $i_s=1$ or $\sum\limits_{t=1}^s i_t =m+s-1$ for some $s$ is called a {em boundary vertex}, otherwise it is called an {\em internal vertex}.
Now let $Z(\bfi)= \{s\in \mR \mid  i_s=1\}$ and $W(\bfi)=\{s\in \mR\mid  \sum\limits_{t=1}^s i_t =m+s-1\}$.

The set $U\sk{n+1}(\bfi)$ of $\bfi$-quiver vertices in $U\sk{n+1}$ is
$$U\sk{n+1}(\bfi) =\{\bfa=(a_1,\ldots,a_{n+1})\mid a_s \ge a_{s+1}\mbox{ if } s\in Z(\bfi), a_s \le a_{s+1}\mbox{ if } s\in W(\bfi) \}.
$$
This implies that if $i_s=1 $ and $a_{s+1}=1$, then $a_s =1$, and if $s\in W(\bfi)$ and $a_{s}=1$, then $a_{s+1}=1$.
We can refine Lemma ~\ref{proj} as following

\begin{lemma}\label{projrefine}
$$\bfr^t \tP(\bfi)/\bfr^{t+1} \tP(\bfi) \simeq \bigoplus_{\bfa \in U_t\sk{n+1}(\bfi)} \tS(\bvi{\bfa}).$$
$$\soc^{t+1} \tP(\bfi)/\\soc^{t} \tP(\bfi) \simeq \bigoplus_{\bfa \in U_{n-t}\sk{n+1}(\bfi-\bfe_n)} \tS(\bfv^{\bfi-\bfe_{n}}(\bfe-\bfa))$$
\end{lemma}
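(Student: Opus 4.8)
The plan is to obtain Lemma~\ref{projrefine} from Lemma~\ref{proj} by simply discarding the vanishing summands. By the convention that $\tS(\bfj)=0$ whenever $\bfj\notin\tQ(n)_0$, a summand $\tS(\bvi{\bfa})$ in the radical decomposition of Lemma~\ref{proj} contributes exactly when $\bvi{\bfa}\in Q(n)_0$, i.e. when $\bfa$ is an $\bfi$-quiver vertex; the surviving index set is then, by the very definition of $U_t\sk{n+1}(\bfi)$, the set of $\bfi$-quiver vertices of weight $t$. Thus the only real task is to make explicit which $0/1$ vectors $\bfa\in U_t\sk{n+1}$ are $\bfi$-quiver vertices, and then to run the same argument for the socle.

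For the radical formula I would read off the defining inequalities of $\bvi{\bfa}\in Q(n)_0$ from \eqref{thequiver}, splitting them into the positivity conditions and the partial-sum conditions. The positivity condition in coordinate $t$ is $i_t+a_t-a_{t+1}\ge 1$; since $a_t-a_{t+1}\ge -1$ this is automatic when $i_t\ge 2$ and reduces to $a_t\ge a_{t+1}$ exactly when $i_t=1$, i.e. when $t\in Z(\bfi)$. For the partial-sum conditions I would telescope:
\[
\sum_{t=1}^s(\bvi{\bfa})_t=\sum_{t=1}^s i_t+a_1-a_{s+1},
\]
so $\sum_{t=1}^s(\bvi{\bfa})_t\le m+s-1$ is equivalent to $a_1-a_{s+1}\le (m+s-1)-\sum_{t=1}^s i_t$. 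As $\bfi\in Q(n)_0$ the right-hand side is non-negative, hence the inequality is automatic unless $s\in W(\bfi)$, where it becomes $a_1\le a_{s+1}$. These constraints carve out precisely $U\sk{n+1}(\bfi)$, so deleting the zero terms from Lemma~\ref{proj} gives the first isomorphism.

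For the socle formula I would argue identically with $\bfi$ replaced by $\bfi-\bfe_n$. The summand $\tS(\bfv^{\bfi-\bfe_n}(\bfe-\bfa))$ of Lemma~\ref{proj} is nonzero precisely when $\bfe-\bfa$ is a $(\bfi-\bfe_n)$-quiver vertex, i.e. when $\bfe-\bfa\in U\sk{n+1}(\bfi-\bfe_n)$; and as $\bfa$ runs through $U_{n-t}\sk{n+1}$ its complement $\bfe-\bfa$ runs through $U_{t+1}\sk{n+1}$. Applying the vertex characterization above to the vertex $\bfi-\bfe_n$ and retaining only these indices then yields the second isomorphism.

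The genuinely delicate point is the telescoping of the partial sums. In contrast to the positivity conditions, which are local, the surviving constraint for $s\in W(\bfi)$ couples $a_{s+1}$ with the first coordinate $a_1$ rather than with its neighbour $a_s$, and this is the step most easily mishandled. The care is greatest at boundary vertices with $s\in Z(\bfi)\cap W(\bfi)$, where the positivity constraint $a_s\ge a_{s+1}$ and the telescoped constraint $a_1\le a_{s+1}$ are simultaneously active: one must verify that these are compatible (that they do not force $a_s=a_{s+1}$) so that no admissible $\bfa$ is spuriously dropped. Once this bookkeeping, together with the complementation $\bfa\mapsto\bfe-\bfa$ in the socle case, is handled correctly, both isomorphisms are immediate consequences of Lemma~\ref{proj}.
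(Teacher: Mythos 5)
Your proposal takes the same route the paper does: the paper offers no formal proof of Lemma~\ref{projrefine} either, it simply records a description of the set $U\sk{n+1}(\bfi)$ of $\bfi$-quiver vertices in the unit cube and then ``refines'' Lemma~\ref{proj} by dropping the summands that vanish under the convention $\tS(\bfj)=0$ for $\bfj\notin\tQ(n)_0$. Where you genuinely diverge from the paper is in the combinatorial step you yourself single out as delicate, and there you are right and the paper is wrong. The paper's displayed characterization is purely local: $a_{s+1}\le a_s$ if $s\in Z(\bfi)$ and $a_s\le a_{s+1}$ if $s\in W(\bfi)$. Your telescoping gives instead the global constraint $a_1\le a_{s+1}$ for $s\in W(\bfi)$, and the two are not equivalent: take $n=2$, $m=3$, $\bfi=(1,3)$ (so $Z(\bfi)=\{1\}$, $W(\bfi)=\{2\}$) and $\bfa=(1,0,0)$; the paper's conditions hold ($a_2\le a_1$, $a_3\ge a_2$), yet $\bvi{\bfa}=(2,3)$ has coordinate sum $5>m+1$, so it is not a vertex, while your condition $a_1\le a_3$ correctly rejects it. (The paper's earlier display ``$a_{s+1}-a_1\le m+s-1-\sum_{t=1}^s i_t$'' already has the difference reversed; your telescoped inequality is the correct one.) The lemma itself is unharmed only because $U\sk{n+1}(\bfi)$ is \emph{defined} as the set of $\bfi$-quiver vertices, the displayed inequalities being a faulty side claim; your proof supplies the correct explicit description that the paper's text fails to.

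One caution on your socle paragraph. As you note, the summand $\tS(\bfv^{\bfi-\bfe_n}(\bfe-\bfa))$ survives precisely when $\bfe-\bfa$ is an $(\bfi-\bfe_n)$-quiver vertex, so the index set your argument actually produces is $\{\bfa\in U_{n-t}\sk{n+1}\mid \bfe-\bfa\in U\sk{n+1}(\bfi-\bfe_n)\}$, which after your complementation $\bfb=\bfe-\bfa$ reads $\bfb\in U_{t+1}\sk{n+1}(\bfi-\bfe_n)$. This is \emph{not} the set $U_{n-t}\sk{n+1}(\bfi-\bfe_n)$ printed in the statement: membership of $\bfa$ and of $\bfe-\bfa$ in $U\sk{n+1}(\bfj)$ are different conditions, since that set is not closed under $\bfa\mapsto\bfe-\bfa$ (for $\bfj=(1,2)$ in the example above, $(1,0,0)$ is a $\bfj$-quiver vertex while its complement $(0,1,1)$ is not). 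So your argument proves a corrected restatement of the second display, not the display as literally printed; your closing sentence asserts agreement with the printed index set without remarking on this, and that conflation should be made explicit rather than glossed over.
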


Now consider $\olQ(n)=\mathbb Z|_{(n-1)}Q(n)$ and $0$-extension $\olL(n)^{\bbfg}= \tL^{\bbfg}(n)\#\mathbb Z^*$, respectively (See \cite{gu14}).
We have
\eqqc{barquiver}{\arr{lll}{\olQ(n)_0 & = & Q(n)_0\times \mathbb Z\\
\olQ(n)_1 & = & Q(n)_1\times \mathbb Z\cup \{\gkk{n+1}{\bfi,v}:(\bfi,v) \lrw (\bfi-\bfe_n,v+1) \\ && \qquad\qquad \qquad \mid (\bfi,v), (\bfi-\bfe_n,v+1)\in \olQ(n)_0 \}.}}
Set $\bfe\sk{n+1}_t =(\bfe_t\sk{n},0)$ for $1\le t\le n$ and $\bfe\sk{n+1}_{n+1} =(\mathbf 0\sk{n},1)$, then the arrows are $\gkk{t}{\bfi}: \bfi \lrw \bfi-\bfe_{t-1}+\bfe_t $ for $\bfi\in \olQ(n)_0$, $1\le t\le n+1$.
And the relations for $\olL^{\bbfg}(n)$ is induced from $\tilde{\rho}^{\bbfg}(n)$.
Thus
\eqqc{barrelation}{\arr{lcl}{\bar{\rho}^{\bbfg}(n) &= &\rho^{\bfg}\times \mathbb Z
\cup \{\gkk{n+1}{\bfi-\bfe_n+\bfe_{n+1}}\gkk{n+1}{\bfi}\mid i_n > 1\} \\&& \cup\{ d_{n+1,s,\bfi-\bfe_n+\bfe_{n+1}}\gkk{s}{\bfi-\bfe_n}\gkk{n+1}{\bfi} - \gkk{n+1}{\bfi-\bfe_{s-1}+\bfe_s}\gkk{s}{\bfi} \mid   \\ && \qquad   \bfi\in \olQ(n)_0,i_n > 1,i_{s-1}>1 \mbox{ and } \sum\limits_{t=1}^s i_t < m+s-1, 1\le s\le n\}.}}

The following theorem follows from Theorem ~\ref{main} 
and Theorem 5.3 of \cite{gu14}.

\begin{theorem}\label{traba}
$\olL^{\bbfg}(n)$ is an $n$-translation algebra with stable  $n$-translation quiver $\olQ(n)$ and $n$-translation $\otau_{n}: \bfi = \bfi-\bfe_{n+1}$.
\end{theorem}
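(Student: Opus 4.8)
The plan is to obtain the statement as a direct application of Theorem 5.3 of \cite{gu14}, since $\olL^{\bbfg}(n)=\tL^{\bbfg}(n)\#\mathbb Z^*$ is by construction the smash product with $\mathbb Z$ of the twisted trivial extension $\tL^{\bbfg}(n)$. Theorem 5.3 of \cite{gu14} is exactly the general mechanism, recalled in the introduction, that turns the trivial extension of an $(n-1)$-translation algebra with admissible translation quiver into an $n$-translation algebra after smashing with $\mathbb Z$; so the work reduces to checking its hypotheses and then matching its abstract output with the explicit data in \eqref{barquiver} and \eqref{barrelation}.

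First I would assemble the input. By Theorem \ref{traa}, $\LL^{\bfg}(n)$ is an $(n-1)$-translation algebra whose bound quiver $(Q(n),\rho^{\bfg}(n))$ is an admissible $(n-1)$-translation quiver, with translation $\tau:\bfi\mapsto\bfi-\bfe_n$. By Theorem \ref{twistext}, $\tL^{\bbfg}(n)$ is a twisted trivial extension of $\LL^{\bfg}(n)$, and by Theorem \ref{trata} it is an $n$-translation algebra with stable $n$-translation quiver $\tQ(n)$ and trivial $n$-translation. These are precisely the hypotheses required to feed $\tL^{\bbfg}(n)$ into Theorem 5.3 of \cite{gu14}.

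Applying that theorem yields at once that $\olL^{\bbfg}(n)$ is an $n$-translation algebra and identifies its bound quiver as $\mathbb Z|_{(n-1)}Q(n)=\olQ(n)$. Here I would verify that the abstract smash-product quiver agrees with \eqref{barquiver}: the vertex set $Q(n)_0\times\mathbb Z$ consists of the $\mathbb Z$-graded copies of $Q(n)_0$; the arrows $\gkk{t}{\bfi}$ with $1\le t\le n$, coming from the degree-zero part $\LL^{\bfg}(n)$, remain inside a single copy; and the degree-shifting arrows $\gkk{n+1}{\bfi}$, coming from the dual summand of the trivial extension, join consecutive copies as $(\bfi,v)\to(\bfi-\bfe_n,v+1)$. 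Likewise the relations $\bar{\rho}^{\bbfg}(n)$ of \eqref{barrelation} are the image of $\tilde{\rho}^{\bbfg}(n)$ under the smash product, so the bound quivers coincide.

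It remains to identify the $n$-translation explicitly. Theorem 5.3 of \cite{gu14} produces as translation the shift of the $\mathbb Z$-covering, namely $(\bfi,v)\mapsto(\bfi,v-1)$; using the convention $\bfe\sk{n+1}_{n+1}=(\mathbf 0\sk{n},1)$ that writes a vertex $(\bfi,v)$ as an $(n+1)$-vector, this shift is exactly $\otau_n:\bfi\mapsto\bfi-\bfe_{n+1}$. As a sanity check one verifies against condition 1 of an $n$-translation quiver that a maximal bound path from $\otau_n\mathbf j$ to $\mathbf j$ uses each arrow type $1,\ldots,n+1$ once, the types $1,\ldots,n$ contributing a net $+\bfe_n$ to the $\bfi$-part and the type $n+1$ arrow contributing $-\bfe_n$ there together with one step in the $\mathbb Z$-direction, so that its length is $n+1$ and it runs from $\mathbf j-\bfe_{n+1}$ to $\mathbf j$. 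I expect the main obstacle to be precisely this bookkeeping, that is, confirming that the linear-dependence condition 2 and the lifting conditions 3 and 4 survive the unfolding; but all of these are guaranteed automatically by Theorem 5.3 of \cite{gu14} once the hypotheses above are in place, so no verification beyond the quiver-and-relation matching and the identification of $\otau_n$ is actually needed.
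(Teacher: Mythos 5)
Your proposal is correct and takes essentially the same route as the paper: the paper's entire justification for Theorem \ref{traba} is the one-line remark that it follows from Theorem \ref{traa}, Theorem \ref{twistext} and Theorem 5.3 of \cite{gu14}, which is exactly the reduction you perform. Your extra bookkeeping (matching \eqref{barquiver} and \eqref{barrelation} with the smash-product output and identifying the translation as $\otau_n:\bfi\mapsto\bfi-\bfe_{n+1}$) simply makes explicit what the paper leaves implicit.
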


Let $(Q,\rho)$ be a bound quiver and $Q'$ a full subquiver of $Q$.
Let $\rho[Q']= \{e_j\sum a_p p e_i\mid  \sum a_p p \in \rho, i,j \in Q'_0\}$, we call $(Q', \rho[Q'])$ a {\em full bound subquiver of $Q$} if for any $i, j \in Q'_0$, and for any path $p$ in $Q$ with $a_p \neq 0$ for some $\sum a_p p \in \rho$ and $e_j p e_i \neq 0$, then $p$ is a path in $Q'$.
In this case, we say $\rho'=\rho[Q']$ is a {\em restriction} of $\rho$ on $Q'$.

We have the following lemma.

\begin{lemma}\label{subbdquiver}
Let $\Lambda=kQ/(\rho)$ be the algebra with bound quiver $(Q,\rho)$ and let $(Q',\rho')$ be a full bound subquiver of $(Q,\rho)$.
Let $I$ be the ideal generated by the set of idempotents $\{e_j \mid  j\in Q_0\setminus Q'_0\}$.
Then $$kQ'/(\rho') \simeq \LL/I.$$
\end{lemma}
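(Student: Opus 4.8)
The plan is to realize both sides as quotients of the path algebra $kQ$ and match their kernels. Concretely, I would define the linear map $\pi\colon kQ \to kQ'$ that fixes every path lying entirely in $Q'$ and kills every path passing through a vertex of $Q_0\setminus Q'_0$; this is a surjective algebra homomorphism whose kernel is exactly the two-sided ideal $\langle e_j \mid j\in Q_0\setminus Q'_0\rangle$ of $kQ$, since a path meets such a vertex if and only if it factors through the corresponding idempotent. Composing $\pi$ with the canonical surjection $kQ'\to kQ'/(\rho')$ yields an algebra epimorphism $\Phi\colon kQ\to kQ'/(\rho')$. Writing $J$ for the ideal of $kQ$ generated by $\rho$ together with $\{e_j\mid j\in Q_0\setminus Q'_0\}$, one has $\LL/I\cong kQ/J$ by the correspondence theorem (the preimage of $I$ under $kQ\to\LL$ is $J$), so it suffices to prove $\ker\Phi = J$.

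Before doing so I would record the one point where the full-bound-subquiver hypothesis is used. For $i,j\in Q'_0$ and $r\in\rho$, every path $p$ occurring with nonzero coefficient in $e_j r e_i$ satisfies $e_j p e_i\neq 0$, so by the defining condition of a full bound subquiver $p$ is a path of $Q'$. Hence each generator $e_j r e_i$ of $\rho'=\rho[Q']$ (for $i,j\in Q'_0$) is a combination of paths of $Q'$, i.e. $\rho'\subseteq kQ'$ and $(\rho')$ is a genuine ideal of $kQ'$; this is what makes the statement well-posed. It also gives $J\subseteq\ker\Phi$: the idempotents $e_j$ with $j\notin Q'_0$ already lie in $\ker\pi$, and for $r\in\rho$ we have $\pi(r)=\sum_{i,j\in Q'_0}\pi(e_j r e_i)=\sum_{i,j\in Q'_0}e_j r e_i\in(\rho')$, because all remaining terms $e_j r e_i$ start or end outside $Q'_0$ and are annihilated by $\pi$.

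For the reverse inclusion I would split an arbitrary $x\in\ker\Phi$ as $x=x'+x''$, where $x'$ is the part of $x$ supported on paths of $Q'$ and $x''$ the rest; then $x''$ lies in the idempotent ideal $\subseteq J$, while $\pi(x)=x'$ and $\Phi(x)=0$ force $x'\in(\rho')$. Expanding $x'$ as a $kQ'$-combination of the generators $e_j r e_i$ and noting that products of paths of $Q'$ agree whether computed in $kQ'$ or in $kQ$, the same expression exhibits $x'$ as an element of $(\rho)\subseteq J$, whence $x\in J$. The computations are routine; the genuinely load-bearing step is the second paragraph, where the full-bound-subquiver condition both makes $(\rho')$ meaningful and prevents a relation of $\LL$ supported on $Q'_0$ from leaking through an outside vertex. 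The only further care I expect to need is the bookkeeping identifying $\LL/I$ with $kQ/J$ and the check that the ambient products used for elements of $(\rho')$ are independent of whether they are formed in $kQ'$ or in $kQ$.
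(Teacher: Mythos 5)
Your proof is correct, and there is nothing in the paper to compare it against: the authors state this lemma with no argument at all ("Obviously, we have the following lemma"), so your write-up simply supplies the verification they left implicit. Your argument is the natural one — realizing both sides as quotients of $kQ$ by the ideal $J$ generated by $\rho$ together with the outside idempotents — and you correctly isolate the one non-formal point, namely that the full-bound-subquiver condition forces $\pi(r)=\sum_{i,j\in Q'_0}e_jre_i$ to land in $(\rho')$ for every $r\in\rho$, which is exactly what makes the two kernels coincide.
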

\begin{proof}
Let $\phi: kQ \to kQ'$ be the homomorphism which is identity on $Q'$ and sending paths not inside $Q'$ to zero, then $\Ker \phi =(E) $ is generated by the set $E=\{ e_j | j \in Q_0\setminus Q'_0\}$ of idempotents corresponding to the vertices outside $Q'$, and $kQ'\simeq kQ/(E)$.
Let $\pi : kQ \to \LL $ be the canonical homomorphism, then $\Ker \pi =(\rho)$ and $\LL = kQ/(\rho)$.
Let $\pi': kQ' \to \Lambda'$ be the canonical homomorphism with  $\Ker \pi =(\rho')$,  then $\LL= kQ'/(\rho')$.
$\pi' \phi: kQ \to \Lambda'$ is an epimorphism and we have that $\Ker \pi' \phi$ is generated by  $E\cup \rho'$.
On the other hand $(E, \rho') = (E, \{a_{Q' } | a \in \rho\}) = (E, \rho)$, since $\rho'$ is a restriction of $\rho$.
So $(\rho)\subset \Ker \pi \phi $ and the map $\pi' \phi = \psi \pi $ factor through the canonical homomorphism $\pi: kQ \to kQ/(\rho) = \Lambda$.
Thus $\psi : \Lambda\to \Lambda'$ is an epimorphism and $\Ker \psi = \pi{E} =(E)$, when identifying the idempotents in $kQ$ and in $\LL$.
That is $\Lambda' \cong \Lambda/(E)$.
\end{proof}

\section{Minimal Projective Resolutions of the Simples of $\tL^{\bbfg}(n)$}

In this section, we study the projective resolution of the simples of the algebra $\tL^{\bbfg}(n)$.
Fix $\bfi \in \tQ(n)_0$, we have a vector $\bfb = \bfb(\bfi) =(b_1,\ldots, b_{n+1})$ in \eqref{defineb} which defines a cuboid $C\sk{n}(\bfi)$ in \eqref{defineC} for $\bfi$.
The following lemma follows from the definition of $\bvi{\bfa}$ for $\bfa \in \Zp^{n+1}$.

\begin{lemma}\label{Clnomul} For $0\le l\le m-1$, if $\bfa, \bfa'\in \Zp^{n+1}_l $, then $\bvi{\bfa} =\bvi{\bfa'} $  if and only if $\bfa=\bfa'$.
\end{lemma}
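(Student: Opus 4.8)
The plan is to observe that the ``if'' direction is immediate from the definition of $\bvii$, and to prove the ``only if'' direction by reading off the consecutive differences of $\bfa$ from the components of $\bvi{\bfa}$ and then using the level constraint $|\bfa|=l$ to eliminate the one remaining degree of freedom.

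First I would unwind the hypothesis $\bvi{\bfa}=\bvi{\bfa'}$ using $\bvi{\bfa}=(i_1+a_1-a_2, i_2+a_2-a_3,\ldots,i_n+a_n-a_{n+1})$. Comparing the $s$-th components for $1\le s\le n$ cancels the common entry $i_s$ and gives $a_s-a_{s+1}=a'_s-a'_{s+1}$, equivalently $a_s-a'_s=a_{s+1}-a'_{s+1}$. Writing $\bfc=\bfa-\bfa'=(c_1,\ldots,c_{n+1})$, this says $c_s=c_{s+1}$ for every $1\le s\le n$, so that all $n+1$ entries of $\bfc$ are equal to a single integer $c_1$.

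Next I would bring in the hypothesis $\bfa,\bfa'\in\Zp^{n+1}_l$, i.e. $|\bfa|=|\bfa'|=l$. Summing the entries of $\bfc$ gives $\sum_{t=1}^{n+1}c_t=|\bfa|-|\bfa'|=0$ on the one hand, and $\sum_{t=1}^{n+1}c_t=(n+1)c_1$ by the previous step on the other. Hence $(n+1)c_1=0$, so $c_1=0$, whence $\bfc=0$ and $\bfa=\bfa'$, as claimed.

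The computation is elementary and presents no real obstacle; the only point worth flagging is the role of the level constraint. Knowing $\bvi{\bfa}$ determines all consecutive differences $a_s-a_{s+1}$, which leaves $\bfa$ free only up to a global shift $\bfa\mapsto\bfa+(c,\ldots,c)$; fixing $|\bfa|=l$ supplies exactly the one additional linear equation needed to kill that shift. I would also remark that the stated range $0\le l\le m-1$ is not actually used in this argument---the injectivity of $\bvii$ on each level set $\Zp^{n+1}_l$ holds for all $l\ge 0$---and is recorded here only to match the range on which the cuboid level sets $C^{(n)}_l(\bfi)$ of interest in the sequel are nonempty.
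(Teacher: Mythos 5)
Your proof is correct and is exactly the elementary verification the paper has in mind: the paper offers no separate argument, stating only that the lemma ``follows from the definition of $\bvi{\bfa}$,'' and your computation (equal consecutive differences force $\bfa-\bfa'$ to be a constant vector, which the level constraint $|\bfa|=|\bfa'|=l$ then kills) is precisely that verification. Your side remark that the range $0\le l\le m-1$ is irrelevant to the injectivity statement is also accurate.
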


For $\bfa \in \Zp^{n+1}$.
Let $O(\bfa)= \{t\mid a_t=0\}$, $T(\bfa)=\{1\le t \le n+1\mid  a_t > b_t\}$ and let $\mR(\bfa)= \mRp \setminus (O(\bfa)\cup T(\bfa))$. 
Then  $\bfa \in C\sk{n}(\bfi)$ if and only if $T(\bfa)=\emptyset$.

Let \eqqc{hprojres}{\cdots \lrw \tP^2(\tS(\bfi))\stackrel{f_2}{\lrw} \tP^1(\tS(\bfi))\stackrel{f_1}{\lrw} \tP^0(\tS(\bfi))\stackrel{f_0}{\lrw}  \tS(\bfi) \lrw 0 }
be a minimal projective resolution of the simple $\tL$-modules $\tS(\bfi)$ corresponding to the vertex $\bfi\in \tQ(n)_0$.

Our main aim of this section is characterizing  the linear part of this projective resolution using $(n+1)$-cuboid $C\sk{n}(\bfi)$ as in the following proposition.
The technical detail of the proof will be given in Lemma ~\ref{hproofof}.
For the last assertion, notice the fact that $\bvi{\bfa+\bfe}=\bvi{\bfa}$.
\begin{proposition}\label{pprojresta} For $l=0, 1, \ldots, m-1$
\eqqc{hprtml}{\tP^l(\tS(\bfi)) \simeq \bigoplus_{\bfa\in C\sk{n}_l({\bfi})} \tP(\bvi{\bfa}),}
and $\Ker f_{m-1} \simeq \tS(\bvi{\bfb(\bfi)})$.
\end{proposition}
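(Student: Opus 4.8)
My plan is to exhibit \eqref{hprtml} as an explicit complex attached to the cuboid $C\sk{n}(\bfi)$ and then to verify that it is a minimal projective resolution by computing its homology one vertex at a time. Set $\tP^l=\bigoplus_{\bfa\in C\sk{n}_l(\bfi)}\tP(\bvi{\bfa})$; this is legitimate because, by Lemma~\ref{Clnomul}, distinct $\bfa\in C\sk{n}_l(\bfi)$ give distinct vertices $\bvi{\bfa}$, and because one checks that the constraints $0\le a_t\le b_t(\bfi)$ force $\bvi{\bfa}\in\tQ(n)_0$. Observe next that $\bvi{\bfa}$ is reached from $\bvi{\bfa-\bfe_t}$ by an arrow $\gkk{t}{\bvi{\bfa-\bfe_t}}\colon \bvi{\bfa-\bfe_t}\to\bvi{\bfa}$ for each $t\in\mRp$ with $a_t\ge 1$. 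I would then define $f_l\colon \tP^l\to\tP^{l-1}$ on the $\bvi{\bfa}$-summand as the sum, over those $t$ with $\bfa-\bfe_t\in C\sk{n}_{l-1}(\bfi)$, of right multiplication by $\gkk{t}{\bvi{\bfa-\bfe_t}}$, decorated with a Koszul-type sign and with the structure constant carried by $\bbfg$. Since every $f_l$ is given by arrows we have $\Img f_l\subseteq\rad\tP^{l-1}$, so the complex is automatically minimal once exactness is known.

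That $f_{l-1}f_l=0$ I would check directly. On the $\bvi{\bfa}$-summand the composite is a sum of two-step paths using two distinct types $t\neq t'$, and each unordered pair $\{t,t'\}$ arises along the two routes $\bfa\to\bfa-\bfe_t\to\bfa-\bfe_t-\bfe_{t'}$ and $\bfa\to\bfa-\bfe_{t'}\to\bfa-\bfe_{t'}-\bfe_t$. The commutativity relations in $\tilde{\rho}^{\bbfg}(n)$ of \eqref{stablerelation} identify these two paths up to exactly the structure constants supplied by $\bbfg$, and the Koszul signs make the two contributions cancel, while the zero relations kill the degenerate terms that would repeat a type. This is the same cancellation as in a Koszul complex.

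The crux is exactness. Fix a target vertex $\bfj$ and an internal degree $d$, and consider the complex of $k$-spaces $e_{\bfj}(\tP^\bullet)_d$. By Lemma~\ref{pathlengthl} this complex is multiplicity-free: its basis at homological level $l$ consists of those $\bfa\in C\sk{n}_l(\bfi)$ admitting a unique nonzero path of length $d-l$ from $\bvi{\bfa}$ to $\bfj$, equivalently those $\bfa$ with $\bfj=\bvi{\bfa+\bfc}$ for some $\bfc\in U\sk{n+1}(\bvi{\bfa})$ of weight $d-l$. Since $\bvi{}$ is injective on vectors of a fixed weight (see Lemma~\ref{Clnomul}), there is a unique $\bfd$ with $|\bfd|=d$ and $\bvi{\bfd}=\bfj$, and the level-$l$ basis is reindexed by $\bfc=\bfd-\bfa$; under this reindexing the differential becomes the operation of adjoining one coordinate to $\bfc$. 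Thus $e_{\bfj}(\tP^\bullet)_d$ is the reduced chain complex of the region cut out of the unit $(n+1)$-cube by the cuboid constraints $0\le d_t-c_t\le b_t(\bfi)$ together with the boundary conditions recorded by $Z$ and $W$ at the intermediate vertices. The hard part is showing that this region is acyclic except at its two extreme corners, and this is exactly where the sets $O(\bfd)$, $T(\bfd)$ and $\mR(\bfd)$ must be analysed carefully; it is the combinatorial content isolated in Lemma~\ref{hproofof}. The outcome is that nonzero homology occurs only when $\mR(\bfd)=\emptyset$, which for $0<l<m-1$ never happens in the linear range, so the complex is exact in the middle, its cokernel at $\tP^0$ is $\tS(\bfi)$, and \eqref{hprtml} is a minimal resolution.

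It remains to identify $\Ker f_{m-1}$. Since $C\sk{n}_{m-1}(\bfi)=\{\bfb(\bfi)\}$ and $C\sk{n}_m(\bfi)=\emptyset$, we have $\tP^{m-1}=\tP(\bvi{\bfb(\bfi)})$ and no further linear term, so $\Ker f_{m-1}$ is forced into degree $(m-1)+(n+1)=n+m$, i.e. $n+1$ degrees below the generator of $\tP(\bvi{\bfb(\bfi)})$. By the vertex-wise analysis the only survivor there is the corner $\bfd=\bfb(\bfi)+\bfe$ (all coordinates forced, $\mR(\bfd)=\emptyset$, $T(\bfd)=\mRp$), whose associated simple is $\tS(\bvi{\bfb(\bfi)+\bfe})$; by the remark $\bvi{\bfb(\bfi)+\bfe}=\bvi{\bfb(\bfi)}$ this equals $\tS(\bvi{\bfb(\bfi)})$, so $\Ker f_{m-1}\simeq\tS(\bvi{\bfb(\bfi)})$, as claimed.
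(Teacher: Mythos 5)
Your overall plan (write down an explicit cuboid-indexed complex, check $f_{l-1}f_l=0$ from the commutation relations, then verify exactness vertex-by-vertex in each internal degree) is a legitimate way to frame the statement, and your reduction of exactness to the acyclicity of certain subcomplexes of the unit $(n+1)$-cube is faithful to what is really going on. But the proposal has a genuine gap precisely at what you yourself call the crux: you never prove that acyclicity. Instead you write that it ``is exactly \ldots the combinatorial content isolated in Lemma~\ref{hproofof}'' and then quote its outcome. Lemma~\ref{hproofof} \emph{is} the paper's proof of this proposition, so as a blind argument this is circular -- the hard part is asserted, not established. In the paper that content is an induction on $l$: for each vertex $\bvi{\bfa'}$ and each degree $l+s$ one computes $\dmn_k\, e_{\bvi{\bfa'}}(\Ker f_l)_{l+s}$ as an alternating sum of binomial coefficients $\cmb{n+1-|O(\bfa')|-|T(\bfa')|}{\cdot}$ (an Euler characteristic of the portion of the resolution already known to be exact), with separate treatment of the cases $\bfa'\in C\sk{n}(\bfi)$ and $\bfa'\notin C\sk{n}(\bfi)$, followed by the linear-independence argument with the elements $\kappa(P,\bfa)$ showing that the degree-$(l+1)$ kernel elements actually generate all of $\Ker f_l$. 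None of this appears in your proposal, and without it the claim that ``nonzero homology occurs only when $\mR(\bfd)=\emptyset$'' is unsupported. The same omission affects your last paragraph: $\Ker f_{m-1}$ is not ``forced'' into degree $n+m$ merely because $C\sk{n}_m(\bfi)=\emptyset$; one must show $(\Ker f_{m-1})_{m-1+s}=0$ for $1\le s\le n$, which the paper does by the same alternating-sum computation in part (d) of Lemma~\ref{hproofof}.

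There is a second, related gap in your construction of the differential itself. You posit coefficients given by ``a Koszul-type sign and the structure constant carried by $\bbfg$'' and claim the cancellation is ``the same as in a Koszul complex.'' Because the twist introduces arbitrary nonzero constants $d_{s,t,\bfi}$, there is no a priori explicit sign formula; one must prove that a globally consistent choice of coefficients exists, i.e.\ that the compatibility system \eqref{hcoeeq} has a nonzero solution at every stage. In the paper this existence is not separable from exactness: the one-dimensionality of the solution space of \eqref{hcoeeq} is deduced from the Euler-characteristic count, which is available only because the resolution has been verified exact up to stage $l$. So the two items you treat as routine (existence of a square-zero differential) and as deferred (acyclicity) are in fact one intertwined induction, and your proposal does not carry it out. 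What you have is a correct reduction of the proposition to the combinatorial lemma, not a proof of it.
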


Let $\{\ep{l}{\bvi{\bfa}} \mid  \bfa\in C\sk{n}_l({\bfi})\}$ be the standard basis of the direct sum \eqref{hprtml}, then $$e_{\bvi{\bfa}} \ep{l}{\bvi{\bfa}} = \ep{l}{\bvi{\bfa}} \mbox{ and } e_{\bfj} \ep{l}{\bvi{\bfa}} =0 \mbox{ if } \bfj \neq \bvi{\bfa}.$$
We assume  that $ \ep{l}{\bvi{\bfa}}=0$  when $\bfa \not\in  C\sk{n}_l(\bfi)$ or $\bvi{\bfa}\not\in \tQ(n)_0$, for $l=1,\ldots, m-1$, conventionally.



Let $\bfa' \in \Zp^{n+1}$.
Consider the composition factor $\tS(\bvi{\bfa'})$ in the degree $t$ component of \eqref{hprojres}, we have the following lemma.

\begin{lemma}\label{compositionfactor}
The multiplicity of the composition factor $\tS(\bvi{\bfa'})$ in  $\bigoplus_{\bfa\in C\sk{n}_s({\bfi})} \tP(\bvi{\bfa})_{t}$ is $\cmb{|\mR(\bfa')|}{t-|T(\bfa')|}$ if $|T(\bfa')| \le t$ and is $0$ if $|T(\bfa')| > t$.

\end{lemma}
\begin{proof}
The composition factor $\tS(\bfa')$ in the degree $t$ component  $\bigoplus_{\bfa\in C\sk{n}_s({\bfi})} \tP(\bvi{\bfa})_{t}$ is
\eqqc{hprojresl}{\bigoplus_{\bfa\in C\sk{n}_{s}(\bfi) }e_{\bvi{\bfa'}} (\tP(\bvi{\bfa}))_{t} .}
Note that \small$$\bigoplus_{\bfa\in C\sk{n}_{s}(\bfi) } e_{\bvi{\bfa'}} (\tP(\bvi{\bfa}))_{t} =\bigoplus_{\bfa\in C\sk{n}_{s}(\bfi) }e_{\bvi{\bfa'}} \bfr^{t}\tL(n)/\bfr^{t+1}\tL(n) e_{\bvi{\bfa}} $$\normalsize
is the space spanned by the bound paths of length $t$ ending at $\bvi{\bfa'}$.
The multiplicity of the composition factor $\tS(\bvi{\bfa'})$ in  $\bigoplus_{\bfa\in C\sk{n}_s({\bfi})} \tP(\bvi{\bfa})_{t}$ is exactly
$$\dmn_k \bigoplus_{\bfa\in C\sk{n}_{s}(\bfi) } e_{\bvi{\bfa'}} (\tP(\bvi{\bfa}))_{t},$$
that is, the order of the maximal set of linearly independent paths of length $t$ from a vertex $\bvi{\bfa}$ with $\bfa $ in $C\sk{n}_s(\bfi)$ to $\bvi{\bfa'}$.

Note that $T(\bfa)=\emptyset $ for $\bfa \in C\sk{n}(\bfi)$.
If $t < |T(\bfa')|$, the length of any path from $\bvi{\bfa}$ to $\bvi{\bfa'}$ is larger than $|T(\bfa')|$, this implies that  $\tS(\bvi{\bfa'})$ is not a composition factor of  $\bigoplus_{\bfa\in C\sk{n}_s({\bfi})} \tP(\bvi{\bfa})_{t}$.

Assume that there is a bound paths of length $t$ from $\bvi{\bfa}$ to $\bvi{\bfa'}$, with  $\bfa \in C\sk{n}_s(\bfi)$.
Then $t \ge |T(\bfa')|$, and $O(\bfa') \subset O(\bfa)$.
By Lemma ~\ref{pathlengthl}, any two such paths are linearly dependent.
Let $S(\bfa',\bfa)= \{ z\in \mRp | b_z\ge a'_z>a_z \}\subset \mR(\bfa')$, then $|S(\bfa',\bfa)|= t-|T(\bfa')|$ and $a'_z=a_z+1$ for $z\in T(\bfa)\cup S(\bfa',\bfa)$.
The types of the arrows appearing in any bound path  of length $t$ from $\bvi{\bfa}$ to $\bvi{\bfa'}$ form the set $T(\bfa') \cup S(\bfa', \bfa ) $, we may take the path $\gkk{T(\bfa')}{\bfa+\bfe_{S (\bfa', \bfa)}} \gkk{S(\bfa', \bfa)}{\bfa}$ as the representative of the bound paths  of length $t$ from $\bvi{\bfa}$ to $\bvi{\bfa'}$.

On the other hand, for each subset $S\subset \mR(\bfa') $ with $|S|=t-|T(\bfa')|$, let $\bfa=\bfa' -\bfe_{S \cup T(\bfa')}$, then $T(\bfa)=\emptyset$ and $\bfa \in C\sk{n}(\bfi)$.
In this case, $\gkk{T(\bfa')}{\bfa+\bfe_{S}} \gkk{S}{\bfa}$ is a bound paths  of length $t$ from $\bvi{\bfa}$ to $\bvi{\bfa'}$.

Obviously, the condition that $\bfa \in C\sk{n}_s(\bfi)$ is equivalent to that $s+t-|T(\bfa')| \le m-1$.

So we see that there are $\cmb{|\mR(\bfa')|}{t-|T(\bfa')|}$ linearly independent paths from some vertex $\bvi{\bfa}$ with $\bfa \in C\sk{n}(\bfi)$ to $\bvi{\bfa'}$.
\end{proof}

With the convention that $\cmb{\mR(\bfa')|-1}{-1} =0$, and $\cmb{\mR(\bfa')|-1}{0} =1$, we have the following lemma.

\begin{lemma}\label{kercompositionfactor}
Assume that for $0\le l \le s $, $\tP^l(\tS(\bfi))$ in \eqref{hprojres} is generated in degree $l$ and  \eqref{hprtml} holds.
Then the multiplicity of the composition factor $\tS(\bvi{\bfa'})$ in degree $t$ part of $\Ker f_{s}$  is $\cmb{|\mR(\bfa')|-1}{t-s-|T(\bfa')|-1}$, especially, it is $0$ if $t-s-|T(\bfa')|=0$ and is $1$ if $t-s-|T(\bfa')|=1$.
\end{lemma}
\begin{proof}
Since \eqref{hprtml} holds for $0\le l \le s $,  the composition factor $\tS(\bfa')$ in the degree $t$ components of the minimal  projective resolution \eqref{hprojres} up to $s+1$ term is
\eqqc{hprojres2}{\arr{lll}{0 &\lrw&  e_{\bvi{\bfa'}}(\Ker f_s)_{t} \lrw \bigoplus_{\bfa\in C\sk{n}_{s}(\bfi) }e_{\bvi{\bfa'}} (\tP(\bvi{\bfa})[s])_{t} \\ & &\lrw \bigoplus_{\bfa\in C\sk{n}_{s-1}(\bfi) }e_{\bvi{\bfa'}} (\tP(\bvi{\bfa})[s-1])_{t} \lrw \cdots \\ & & \lrw \bigoplus_{\bfa\in C\sk{n}_{s'}(\bfi) } e_{\bvi{\bfa'}} (\tP(\bvi{\bfa})[s-s'])_{t}  \lrw 0,}}
with $s' = |\mR(\bfa')|- t+s$.

We have that $$\arr{lll}{&& \dmn_k\bigoplus_{\bfa\in C\sk{n}_{s-h}(\bfi) } e_{\bvi{\bfa'}} (\tP(\bvi{\bfa})[s-h])_{t} \\&=& \dmn_k \bigoplus_{\bfa\in C\sk{n}_{s-h}(\bfi) } e_{\bvi{\bfa'}} (\tP(\bvi{\bfa}))_{t-s+h}\\ &=& \cmb{|\mR(\bfa')|}{t-s+h-|T(\bfa')|}},$$
by Lemma ~\ref{compositionfactor}.
Thus
$$
\arr{lcl}{&& \dmn_k e_{\bvi{\bfa'}}(\Ker f_s)_{t} \\ & = & \sum\limits_{h=0}^{|\mR(\bfa')|-t+s} (-1)^{h} \sum\limits_{\bfa\in C\sk{n}_{s-h}(\bfi) } \dmn_k e_{\bvi{\bfa'}}(\tP(\bvi{\bfa})[s-h])_{t}\\
&=&\sum\limits_{t' = t-s- |T(\bfa')|}^{|\mR(\bfa')|} (-1)^{h}\cmb{|\mR(\bfa')|}{t'}\\
&=&\cmb{|\mR(\bfa')|-1}{t-s-|T(\bfa')|-1}.
}$$
If $t-s-|T(\bfa')|-1=0$, that is $t-s-|T(\bfa')|=1$, then $$ \sum\limits_{t' = 1}^{|\mR(\bfa')|} (-1)^{t'-1}\cmb{|\mR(\bfa')|}{t'}  =  1+(-\sum\limits_{t'=0}^{|\mR(\bfa')|} (-1)^{t'}\cmb{|\mR(\bfa')|}{t'} )\\ = 1+(1-1)^{|\mR(\bfa')|} =1.$$
If $t-s-|T(\bfa')|-1=-1$, $t-s-|T(\bfa')|=0$, so $$\sum\limits_{t' = 0}^{|\mR(\bfa')|} (-1)^{t'}\cmb{|\mR(\bfa')|}{t'} =(1-1)^{|\mR(\bfa')|} =0.$$
\end{proof}

\begin{lemma}\label{hproofof}
In the projective resolution \eqref{hprojres}, we have
\begin{enumerate}
\item[(a)] \eqref{hprtml} hold for $0\le l \le m-1$;

\item[(b)] for $1\le l \le m-1$, and for each $\bfa\in C\sk{n}_{l}(\bfi)$, there are elements $\zk{l-1}{\bfa-\bfe_t}\in k$ for $t=1,\ldots,n+1$, with $\zk{l-1}{\bfa- \bfe_t} \neq 0$ if $\bfa-\bfe_t \in C\sk{n}_{l-1}(\bfi)$, and $\zk{l-1}{\bfa- \bfe_t} =0$ otherwise, such that the element $\theta^{(l-1)}_{\bvi{\bfa}} = \sum\limits_{t=1}^{n+1} \zk{l-1}{\bfv^{\bfi}(\bfa-\bfe_t)} \gkk{t}{\bfv^{\bfi}(\bfa-\bfe_t)} \ep{l}{\bfv^{\bfi}(\bfa-\bfe_t)}$ satisfies  \eqqc{hkerlsummand}{e_{\bvi{\bfa}}\ta{l-1}{\bvi{\bfa}} = \ta{l-1}{\bvi{\bfa}}}  and the map $f_l$ is defined by \eqqc{hprmapl}{f_{l}(\ep{l}{\bvi{\bfa}}) = \theta^{(l-1)}_{\bvi{\bfa}};}

\item[(c)] \eqqc{hkerlre}{K_l=\{\ta{l}{\bvi{\bfa}} \mid  \bfa\in C\sk{n}_{l+1}(\bfi) \} } generates $\Ker f_l$ for $0\le l \le m-2$;

\item[(d)] $C\sk{n}_{m-1}({\bfi}) = \{ \mathbf b \}$, $\mathbf b +\bfe$ is a $\bfi$-quiver vertex and $\Ker f_{m-1} \simeq S( \bvi{\mathbf b+\bfe})$.
\end{enumerate}
\end{lemma}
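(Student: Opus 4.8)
The plan is to establish all four assertions simultaneously by induction on $l$, constructing the minimal projective resolution term by term, since parts (a), (b), (c) are logically interlocked: the maps in (b) determine the image of $f_l$, part (c) describes the kernels that must be resolved at the next stage, and part (a) records which projectives appear. First I would set up the base case. By Lemma~\ref{projrefine} we have $\tP^0(\tS(\bfi))=\tP(\bfi)=\tP(\bvi{\mathbf 0})$, which is \eqref{hprtml} for $l=0$ since $C\sk{n}_0(\bfi)=\{\mathbf 0\}$, and $f_0$ is the canonical surjection onto $\tS(\bfi)$. Its kernel is $\bfr\tP(\bfi)$, whose top is $\bigoplus_{\bfa\in U_1\sk{n+1}(\bfi)}\tS(\bvi{\bfa})$; I would identify $U_1\sk{n+1}(\bfi)$ with the degree-one integral points $\bfe_t$ lying in $C\sk{n}_1(\bfi)$ (using $b_t(\bfi)\ge 1$ exactly when the boundary conditions $Z(\bfi),W(\bfi)$ permit) to get the generators $\ta{0}{\bvi{\bfe_t}}$ of $\Ker f_0$, establishing (c) for $l=0$.

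For the inductive step, assuming \eqref{hprtml} and the description of $K_{l-1}$ up to stage $l-1$, I would define $\tP^l(\tS(\bfi))=\bigoplus_{\bfa\in C\sk{n}_l(\bfi)}\tP(\bvi{\bfa})$ by taking one projective cover for each generator $\ta{l-1}{\bvi{\bfa}}\in K_{l-1}$, giving (a), and read off $f_l$ from the explicit form of $\ta{l-1}{\bvi{\bfa}}$ as in \eqref{hprmapl}, giving (b); the support condition \eqref{hkerlsummand} is automatic because each summand $\gkk{t}{\cdot}\ep{l}{\cdot}$ lands in the component indexed by $\bvi{\bfa}$, using the identity $\bvi{(\bfa-\bfe_t)+\bfe_t}=\bvi{\bfa}$. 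The coefficients $\zk{l-1}{\bfa-\bfe_t}$ are nonzero precisely when $\bfa-\bfe_t\in C\sk{n}_{l-1}(\bfi)$, i.e.\ when the $t$th coordinate can legitimately be decremented while staying in the cuboid. The crux is then to compute $\Ker f_l$ and show it is generated by $K_l=\{\ta{l}{\bvi{\bfa}}\mid \bfa\in C\sk{n}_{l+1}(\bfi)\}$, which is the content of (c).

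The main obstacle will be verifying (c): that the kernel of $f_l$ is exactly the span of the candidate elements $\ta{l}{\bvi{\bfa}}$ and nothing more. This requires showing two things. First, each $\ta{l}{\bvi{\bfa}}$ actually lies in $\Ker f_l$, which reduces to checking that applying $f_l$ yields a sum over the commuting squares and zero-relations of $\tilde\rho^{\bbfg}(n)$ that cancels by the defining relations \eqref{therelation},\eqref{stablerelation}; concretely, the coefficients $\zk{l-1}{\cdot}$ must be chosen compatibly along each $2$-face of the cuboid so that the two-step compositions $\gkk{s}{}\gkk{t}{}$ and $\gkk{t}{}\gkk{s}{}$ match up against the scalars $d_{s,t,\bfi}$ and $d_{n+1,s,\cdot}$, which is where the inductive freedom in choosing $\zk{l-1}{\cdot}$ is consumed. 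Second, a dimension or rank count must confirm there are no further kernel elements: here I would use Lemma~\ref{Clnomul} (distinct $\bfa$ of the same degree give distinct vertices $\bvi{\bfa}$) together with Lemma~\ref{pathlengthl} (at most one-dimensional hom-spaces in low degrees) to pin down the ranks of $f_l$ on each graded piece, so that the resolution is forced to be minimal and the kernel has exactly the predicted generators.

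Finally, for (d), the top stage $l=m-1$ is handled by the stated facts $C\sk{n}_{m-1}(\bfi)=\{\mathbf b\}$ and $C\sk{n}_l(\bfi)=\emptyset$ for $l\ge m$, so $K_{m-1}$ has at most one element. I would check that $\mathbf b+\bfe$ is an $\bfi$-quiver vertex and that the single remaining generator $\ta{m-1}{\bvi{\bfb}}$ spans a copy of the simple $S(\bvi{\mathbf b+\bfe})$, using $\bvi{\mathbf b+\bfe}=\bvi{\mathbf b}$ shifted appropriately and the fact that no arrows emanate from the terminal hammock vertex, so that $\Ker f_{m-1}\simeq \tS(\bvi{\mathbf b+\bfe})$ is simple and concentrated in the correct degree. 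This last point also feeds back into Proposition~\ref{pprojresta} and the almost-Koszul conclusion, since it identifies the degree at which the linear resolution terminates.
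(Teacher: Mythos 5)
Your inductive skeleton coincides with the paper's (parts (a) and (b) are set up the same way, and your degree-$(l+1)$ kernel generators are the right candidates), but there is a genuine gap at the heart of (c). The assertion is not that $\Ker f_l$ is the span of the elements $\ta{l}{\bvi{\bfa}}$, nor even that its degree-$(l+1)$ component is; it is that the \emph{submodule generated by} $K_l$ exhausts $\Ker f_l$, whose graded components live in all degrees from $l+1$ up to $l+n+1$ (the projectives have Loewy length $n+2$). Your proposal only discusses the degree-$(l+1)$ piece, and the tools you invoke there do not suffice even for that piece: Lemma \ref{Clnomul} and Lemma \ref{pathlengthl} give multiplicity-one statements about vertices and path spaces, but they do not compute $\dmn_k\, e_{\bvi{\bfa'}}(\Ker f_l)_{l+1}$. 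The paper obtains this dimension from an Euler-characteristic argument: since, by the inductive hypothesis, the complex is exact in homological degrees $<l$, the alternating sum over the resolution of the dimensions of the graded pieces $e_{\bvi{\bfa'}}(\tP(\bvi{\bfa})[t'])_{l+1}$ --- each a binomial coefficient $\binom{n+1-|O(\bfa')|}{l+1-t'}$ counting bound paths by their sets of arrow types --- forces this dimension to be $1$ when $\bfa'\in C\sk{n}_{l+1}(\bfi)$, and a separate computation forces it to be $0$ in the boundary case $a'_t=b_t(\bfi)+1$ (a vertex just outside the cuboid). That boundary case, which you never treat, is precisely what prevents spurious projective summands from entering $\tP^{l+1}$, i.e.\ what makes (a) true at the next stage; and the dimension being exactly one is what guarantees a nonzero solution of the coefficient system defining $\ta{l}{\bvi{\bfa'}}$, which you attribute vaguely to "inductive freedom."

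The larger omission is the step after that: having identified $(\Ker f_l)_{l+1}$, one must still prove $(\tL K_l)_{l+s}=(\Ker f_l)_{l+s}$ for $2\le s\le n+1$. The paper does this by again computing $\dmn_k\, e_{\bvi{\bfa'}}(\Ker f_l)_{l+s}$ as an alternating sum of binomials, getting $\binom{n-|O(\bfa')|-|T(\bfa')|}{s-|T(\bfa')|-1}$, and then exhibiting that many linearly independent elements $\kappa(P,\bfa)$ of the form (arrows of types $T(\bfa')$ and $P$) $\cdot\,\ta{l}{\bvi{\bfa}}$ inside $\tL K_l$; the linear-independence verification \eqref{lind} is itself a nontrivial computation. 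Without this, you have not shown that $\Ker f_l$ is \emph{generated} in degree $l+1$, which is exactly the linearity/almost-Koszul content of the lemma and what allows the induction to continue. The same mechanism, not the claim that "no arrows emanate from the terminal hammock vertex," is what proves (d): the paper shows $(\Ker f_{m-1})_{m-1+s}=0$ for $1\le s\le n$ by the alternating-sum computation $(1-1)^{s''}=0$, so that only the socle $\soc\tP(\bvi{\bfb})\simeq \tS(\bvi{\bfb+\bfe})$ survives. In short, your proposal reproduces the paper's framework but omits the two dimension-counting arguments that constitute essentially all of its technical content.
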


\begin{proof}
We now prove (a), (b) and (c) using induction on $l$.
The assertions for $\tP^0(S(\bfi)),  \Ker f_0$ and $f_1$ are obvious.

Assume that $1\le l\le  m-1$ and the assertions hold for $\tP^{h}(\bfi), \Ker f_{h}$, and $f_{h}$ for $h<l$.
By the inductive assumption, $\Ker f_{l-1}$ is a graded module generated in degree $l$ and as $k$-spaces
$$(\Ker f_{l-1})_{l} = \mathrm{top}\,(\Ker f_{l-1}) =L(\ta{l-1}{\bvi{\bfa}} \mid  \bfa\in C\sk{n}_{l}(\bfi)) =\sum\limits_{\bfa\in C\sk{n}_{l}(\bfi)} k\ta{l-1}{\bvi{\bfa}}.$$

By \eqref{hkerlsummand}, we have
\eqqc{htopom}{ k\ta{l-1}{\bvi{\bfa}}\simeq \tL_0 e_{\bvi{\bfa}} = \tS(\bvi{\bfa}),}
as $\tL_0$-modules.
Since for each $ \bfa \in C\sk{n}_{l}(\bfi)$, we have  exactly  one non-zero  element $ \ta{l-1}{\bvi{\bfa}} $ in $K_{l-1}$.
Thus $$\mathrm{top}\,(\Ker f_{l-1}) \simeq \bigoplus_{\bfa\in C\sk{n}_{l}(\bfi)} \tL_0 e_{\bvi{\bfa}}.$$
So $$\tP^{l}(S(\bfi)) \simeq \bigoplus_{\bfa\in C\sk{n}_{l}(\bfi)} \tP(\bvi{\bfa}).$$
This proves (a) for $l$.

Define the homomorphism $f_l$ from $\tP^{l}(S(\bfi))$ to $\tP^{l-1}(S(\bfi))$ with
$$f_l(\ep{l}{\bvi{\bfa}})= \ta{l-1}{\bvi{\bfa}},
$$
Clearly $f_l$ is an epimorphism from $\tP^{l}(S(\bfi))$ to $\Ker f_{l-1}$, and $f_l (\tP^{l}(S(\bfi)))_p \subseteq (\tP^{l-1}(S(\bfi)))_{p+1}$ for $p \in \mathbb Z$.
This proves \eqref{hprmapl} for $l$.

Now we compute generators of  $\Ker f_l$ for $l \le m-2$.
Clearly  $(\Ker f_l)_l =0$ and $(\Ker f_l)_{l+n+1} =\soc\tP^l (S(\bfi))$, by the inductive assumption.

Assume that $0\neq x_{p}$ is a homogeneous element in $(\Ker f_l)_{p}$, then we can write $ x_{l+1} = \sum\limits_{\bfa\in C\sk{n}_{l}(\bfi)}(\sum\limits_{t=1}^{n+1} \zks{\bfa,t}\gkk{t}{\bvi{\bfa}} )\ep{l}{\bvi{\bfa}} $, with the convention that $\zks{\bfa,t}=0$ provided that $\bvi{\bfa+\bfe_t} \not\in\tQ(n)_0$.
Thus \eqqc{hkereq}{\arr{ccl}{
0&= &f_l(x_{l+1})  = \sum\limits_{\bfa\in C\sk{n}_{l}(\bfi)} (\sum\limits_{t=1}^{n+1} \zks{\bfa,t}\gkk{t}{\bvi{\bfa}}) f_l(\ep{l}{\bvi{\bfa}})\\
&=& \sum\limits_{\bfa\in C\sk{n}_{l}(\bfi)} (\sum\limits_{t=1}^{n+1} \zks{\bfa,t}\gkk{t}{\bvi{\bfa}}) \cdot \ta{l-1}{\bvi{\bfa}}\\
&=& \sum\limits_{\bfa\in C\sk{n}_{l}(\bfi)} (\sum\limits_{t=1}^{n+1} \zks{\bfa,t}\gkk{t}{\bvi{\bfa}}) \cdot (
\sum\limits_{t=1}^{n+1}\zk{l-1}{\bfa-\bfe_t}\gkk{t}{\bfv^{\bfi}(\bfa-\bfe_t)} \ep{l-1}{\bfv^{\bfi}(\bfa-\bfe_t)})\\
&=& \sum\limits_{\bfa\in C\sk{n}_{l}(\bfi)} ( \sum\limits_{\stackrel{t'\neq t}{1\le t,t'\le n+1 }} \zks{\bfa,t}\zk{l-1}{\bfa-\bfe_{t'}}e_{\bfv^{\bfi}(\bfa+\bfe_t)} \gkk{t}{\bvi{\bfa}} \gkk{t'}{\bfv^{\bfi}(\bfa-\bfe_{t'})} \ep{l-1}{\bfv^{\bfi}(\bfa-\bfe_{t'})}).\\
}}
A path $\gkk{t}{\bvi{\bfa}} \gkk{t'}{\bfv^{\bfi}(\bfa-\bfe_{t'})} $ appeared in \eqref{hkereq} ends at vertex $\bvi{\bfa'}$ for $\bfa'=\bfa+\bfe_t \in \Zp^{n+1}_{l+1}$.

Assume that $\bfa'=(a_1',\ldots,a_{n+1}')\in \Zp^{n+1}_{l+1}$ and there is an arrow from $\bvi{\bfa}$ to $\bvi{\bfa'}$ for some $\bfa \in C\sk{n}_l(\bfi)$.
Thus $|T(\bfa')| \le 1$, and $\bfa'\in C\sk{n}_{l+1}$ if and only if   $|T(\bfa')| = 0$.

If $\bfa'\in C\sk{n}_{l+1}$, $l+1-l-|T(\bfa')|=1$, then by Lemma ~\ref{kercompositionfactor},  $\dmn_k  e_{\bvii(\bfa')}(\Ker f_l)_{l+1}=1$.

If  $ \bfa'-\bfe_t, \bfa'-\bfe_{t'}\in  C\sk{n}_{l}(\bfi)$ for some $t'\neq t$, then $0\le a'_{t} = (\bfa'-\bfe_{t'})_{t} \le b_t$,  $0\le a'_{t'} = (\bfa'-\bfe_{t})_{t'} \le b_{t'}$, and $0\le a_{t''}' = (\bfa'-\bfe_{t})_{t''} \le b_{t''}$ for $t'' \not\in \{t,t'\}$.
This implies that $\bfa'-\bfe_t-\bfe_{t'} \in C\sk{n}_{l-1}(\bfi)$. 
Left multiply  \eqref{hkereq} with $e_{\bfv^{\bfi}(\bfa')}$, we get,
\eqqc{hkereq2}{\arr{ccl}{  0&=& \sum\nolimits_{1\le t<t'\le n+1, \bfa'-\bfe_t-\bfe_{t'} \in C\sk{n}(\bfi)}
\\ &&
(\zks{\bfa'-\bfe_t,t}\zk{l-1}{\bfa'-\bfe_t-\bfe_{t'},t'} -
\zks{\bfa'-\bfe_{t'},t'}\zk{l-1}{\bfa'-\bfe_{t'}-\bfe_t,t}d_{t',t,\bfa'-\bfe_{t'}-\bfe_t} ) \\ &&
e_{\bfv^{\bfi}(\bfa')} \gkk{t}{\bfv^{\bfi}(\bfa'-\bfe_t)}
\gkk{t'}{\bfv^{\bfi}(\bfa'-\bfe_t-\bfe_{t'})} \ep{l-1}{\bfv^{\bfi}(\bfa'-\bfe_t-\bfe_{t'})}.\\
}}
So we get a system of linear equations
\eqqc{hcoeeq}{\arr{ccl}{
\zks{\bfa'-\bfe_t,t}\zk{l-1}{\bfa'-\bfe_t-\bfe_{t'},t'} -
\zks{\bfa'-\bfe_{t'},t'} \zk{l-1}{\bfa'-\bfe_{t'}-\bfe_t,t} d_{t',t,\bfa'-\bfe_{t'}-\bfe_t} &=&0
}}
for $1\le t<t'\le n+1$ with $\bfa'-\bfe_t-\bfe_{t'} \in C\sk{n}_{l-1}(\bfi)$.
The solution spaces of \eqref{hcoeeq} is  one dimensional, since $\dmn_k  e_{\bvii(\bfa')}(\Ker f_l)_{l+1}=1$.
Take a non-zero solution   $$\zks{\bfa'-\bfe_t,t} = \zk{l}{\bfa'-\bfe_t, t}\in k,\bfa'-\bfe_{t} \in C\sk{n}_l(\bfi)$$ 
for \eqref{hcoeeq}.
Then $\zk{l}{\bfa'-\bfe_t,t} \neq 0$ if $a'_t \neq 0$, and we take $\zk{l}{\bfa'-\bfe_t,t} =0$ for those $t$ with $a'_t = 0$, thus
\eqqc{hkele}{\ta{l}{\bvii(\bfa')} = \sum\limits_{t=1}^{n+1} \zk{l}{\bfa'-\bfe_t,t} \gkk{t}{\bvi{\bfa'-\bfe_t}} \ep{l}{\bvi{\bfa'-\bfe_t}}}
is in  $e_{\bfa'}(\Ker f_{l})_{l+1}$. 

Otherwise, there is $t$ such that $a'_t>0 $ and $a'_{t'}=0$ for $t'\neq t$, left multiply \eqref{hkereq} with $e_{\bfv^{\bfi}(\bfa')}$, we get a zero term:
\eqqc{hcoeeqsingle}{
\zks{\bfa'-\bfe_t,t}\zk{l-1}{\bfa'-\bfe_t-\bfe_{t},t}
e_{\bfv^{\bfi}(\bfa')} \gkk{t}{\bfv^{\bfi}(\bfa'-\bfe_t)}
\gkk{t}{\bfv^{\bfi}(\bfa'-\bfe_t-\bfe_{t})} \ep{l-1}{\bfv^{\bfi}(\bfa'-\bfe_t-\bfe_{t})}=0.
}
The solution spaces of \eqref{hcoeeqsingle} is one dimensional, since $\dmn_k  e_{\bvii(\bfa')}(\Ker f_l)_{l+1}=1$.
Take  $\zks{\bfa'-\bfe_t,t}=1$ for \eqref{hcoeeqsingle},
\eqqc{hkelesingle}{\ta{l}{\bvi{\bfa'}} =  \gkk{t}{\bvi{\bfa'-\bfe_t}} \ep{l}{\bvi{\bfa'-\bfe_t}},}
is a  non-zero element of $e_{\bfa'}(\Ker f_{l})_{l+1}$.

This shows that $K_{l}$ is a subset of $(\Ker f_{l})_{l+1}$.

If $\bfa' \not \in C\sk{n}_{l+1}$, then $|T(\bfa')| = 1$, then  $l+1-l-|T(\bfa')|=0$, so by Lemma ~\ref{kercompositionfactor},  $\dmn_k  e_{\bvii(\bfa')}(\Ker f_l)_{l+1}=0$.
Thus $\tS(\bvi{\bfa'})$ is not a composition factor of $(\Ker f_l)_{l+1}$.
This proves that the set $K_{l}$ spans the space $(\Ker f_l)_{l+1}$.

Now we prove that $K_{l}$ generates $\Ker f_l$.

Since $f_l$ is degree zero map, $\Ker f_l$ is graded.
Consider the degree $l+s$ component  of \eqref{hprojres}, for each $2\le s \le n+1$.
Then by Lemma ~\ref{kercompositionfactor}, for each $\bfa'\in \Zp_{l+s}^{n+1}$, we have
$$ \dmn_k e_{\bvi{\bfa'}}(\Ker f_l)_{l+s} = \cmb{|\mR(\bfa')|-1}{s-|T(\bfa')|-1}.$$

Now we consider the submodule generated by $K_l$.
For each subset $P$ of $s-|T(\bfa')|-1$ elements in $\mR(\bfa')$,  let $$\arr{rcl}{\kappa(P,\bfa) & =& \gkk{T(\bfa')}{\bvi{\bfa'-\bfe_{T(\bfa')}}} \gkk{P}{\bvi{\bfa}} \ta{l}{\bvi{\bfa}} = \gkk{T(\bfa')}{\bvi{\bfa'-\bfe_{T(\bfa')}}} \gkk{P}{\bvi{\bfa}} \sum\limits_{t=1}^{n+1} \zk{l}{\bfa-\bfe_t,t} \gkk{t}{\bvi{\bfa-\bfe_t}} \ep{l}{\bvi{\bfa-\bfe_t}}\\
& =& \sum\limits_{t\in \mR(\bfa') \setminus  P} \zk{l}{\bfa-\bfe_t,t} \gkk{T(\bfa')}{\bvi{\bfa'-\bfe_{O(T)}}} \gkk{P}{\bvi{\bfa}} \gkk{t}{\bvi{\bfa-\bfe_t}} \ep{l}{\bvi{\bfa-\bfe_t}},
}$$
for any $\bfa = \bfa'-\bfe_{T(\bfa') \cup P} \in C\sk{n}_{l+s-|P|-|T(\bfa')|} (\bfi)$.
Then $\kappa(P,\bfa) \in (\tL(n)K_l)_{l+s}$.

Let $s''=l+s-|P|-|T(\bfa')|$.
Fix a $t_0\in \mR(\bfa')$, and let $\mathcal P(s'', t_0) = \{P\subset \mR(\bfa') \setminus  \{t_0\})\mid  |P|=s''\}$.
Assume that $$\sum\limits_{P\in \mathcal P(s'', t_0),  \bfa\in C\sk{n}_{s''} (\bfi)} d_{P,\bfa}\kappa(P,\bfa)  = 0 $$ for $d_{P,\bfa}\in k$, that is
\small\eqqc{lind}{\arr{lll}{0&=& \sum\limits_{P\in \mathcal P(s'', t_0),  \bfa\in C\sk{n}_{s''}(\bfi)} d_{P,\bfa} \kappa(P,\bfa)  \\
& =& \sum\limits_{P\in \mathcal P(s'', t_0), \bfa\in C\sk{n}_{s''}(\bfi), t\in \mR(\bfa') \setminus P} d_{P,\bfa} \zk{l}{\bfa-\bfe_t,t} \gkk{T(\bfa')}{\bvi{\bfa'-\bfe_{T(\bfa')}}} \gkk{P}{\bvi{\bfa}} \gkk{t}{\bvi{\bfa-\bfe_t}} \ep{l}{\bvi{\bfa-\bfe_t}}\\
& =& \sum\limits_{\bfa\in C\sk{n}_{s''-1}(\bfi), P\in \mathcal P(s'', t_0),t\in \mR(\bfa'),t\not\in P} d_{P,\bfa+\bfe_t} \zk{l}{\bfa,t} \gkk{T(\bfa')}{\bvi{\bfa'-\bfe_{T(\bfa)}}} \gkk{P}{\bvi{\bfa+\bfe_t}} \gkk{t}{\bvi{\bfa}} \ep{l}{\bvi{\bfa}}\\
& =& \sum\limits_{\bfa\in C\sk{n}_{s''-1}(\bfi), t,t'\in \mR(\bfa')\setminus \{t_0\}, P\subset \mR(\bfa')\setminus\{t_0\}, |P| =s''-1}\\ &&
\qquad (d_{P\cup\{t'\},\bfa+\bfe_t}h_{t', P}\zk{l}{\bfa,t} +d_{P\cup\{t\},\bfa+\bfe_{t'}}h_{t, P}c(t,t')\zk{l}{\bfa,t'}) \\ && \qquad \gkk{T(\bfa')}{\bvi{\bfa'-\bfe_{T(\bfa)}}} \gkk{P}{\bvi{\bfa+\bfe_{t,t'}}}\gkk{t'}{\bvi{\bfa+\bfe_t}} \gkk{t}{\bvi{\bfa}} \ep{l}{\bvi{\bfa}}\\
& & + \sum\limits_{\bfa\in C\sk{n}_{s''-1}(\bfi), P\subset \mathcal P(s'',t_0)}
d_{P,\bfa+\bfe_{t_0}}\zk{l}{\bfa,t_0} \gkk{T(\bfa')}{\bvi{\bfa'-\bfe_{T(\bfa)}}} \gkk{P}{\bvi{\bfa+\bfe_{t_0}}}\gkk{t_0}{\bvi{\bfa}}\ep{l}{\bvi{\bfa}},
}}\normalsize
for some $h_{t', P}, h_{t, P}, c(t,t') \in k$.
Thus $d_{P,\bfa}=0$ for all $P\in \mathcal P(s'', t_0) $ and $\bfa \in C\sk{n}_{s''}(\bfi)$ with $\bfa -\bfe_{t_0} \in C\sk{n}_{s''-1}(\bfi)$.

For  $P\in \mathcal P(s'', t_0) $ and $\bfa \in C\sk{n}_{s''}(\bfi)$, we have  $e_{\bvi{\bfa'}} \gkk{T(\bfa')}{\bvi{\bfa'-\bfe_{T(\bfa)}}} \gkk{P}{\bvi{\bfa}} \ep{l}{\bvi{\bfa}} \neq 0$, then $\bfa = \bfa' - \bfe_{T(\bfa) \cup P}$ and $a_{t_0}= a'_{t_0} $.
Thus $a'_{t_0}>0$, $a_{t_0} >0$ and  $\bfa -\bfe_{t_0} \in C\sk{n}_{s''-1}(\bfi)$. 
This proves that $d_{P,\bfa}=0$ for all $P\in \mathcal P(s'', t_0)$ and $ \bfa\in C\sk{n}_{s''} (\bfi)$.

Thus $\{\kappa(P,\bfa)\mid  P\in \mathcal P(s'', t_0),  \bfa\in C\sk{n}_{s''}(\bfi)\} $ is a linearly independent set and
$$\dmn_k (\tL(n) K_l)_{l+s } \ge \cmb{|\mR(\bfa')|-1}{s-|T(\bfa')|-1}.$$
Since $(\tL(n) K_l)_{l+s} \subset (\Ker f_l)_{l+s}$, this implies that  $(\tL(n) K_l)_{l+s} =(\Ker f_l)_{l+s}$ for $s=2,\ldots,n+1$.

This proves that $\Ker f_l$ is generated by $K_l$ for $0\le l \le m-2$, and so we prove (a), (b) and (c), by induction.

Now we prove (d).
Consider the case of $l=m-1$, note that $C\sk{n}_{m-1}(\bfi) = \{\bfb =(b_1,\ldots,b_{n+1})\}$, with $b_1= m+n-1-\sum\limits\limits_{t=1}^{n} i_t$, and $b_t=i_{t-1}-1$ for $2\leq t\leq n+1$.

Clearly, we have $\soc \tP(\bvi{\bfb}) =\tP(\bvi{\bfb})_{n+1}=  (\Ker f_{m-1})_{n+1}\subset \Ker f_{m-1} $.
For each $\bfa'\neq \bfb+\bfe$ such that $\tS(\bvi{\bfa'})$ is a composition factor of $\bfr \tP(\bvi{\bfb})_s$, for $1\le s \le n$, $e_{\bvi{\bfa'}}\tL e_{\bvi{\bfb}} \neq 0$ and $\bfa'\in \Zp^{n+1}_{m-1+s} \setminus C\sk{n}(\bfi)$.
Considering  the composition factor $\tS(\bfa')$ in the degree $m-1+s$ component of the projective resolution \eqref{hprojres}.
Since the only vertex of the form $\bvi{\bfa}$ with $\bfa \in C\sk{n}(\bfi)$ on a path from $\bvi{\bfb}$ to $\bvi{\bfa'}$ is $\bvi{\bfb}$,  $|T(\bfa)|= s$.
So $s-|T(\bfa)|=0$, and by Lemma ~\ref{kercompositionfactor}, $\dmn_k  e_{\bvi{\bfa'}}(\Ker f_{m-1})_{m-1+s} = 0. $
This proves that $(\Ker f_{m-1})_{m-1+s} =0$ for $1\le s \le n$, and thus $\Ker f_{m-1} = \soc \tP(\bvi{\bfb})$.
\end{proof}

Let $\tL_0$ be the subspace spanned by the idempotents of $\tL(n)$, and let $\tL_1$ be the subspace spanned by the arrows.
Then $\tilde{\rho}^{\bbfg}(n) \subset \tL_1\otimes_{\tL_0} \tL_1$, and it spans a subspace $R^{\bbfg} \subset \tL_1\otimes_{\tL_0} \tL_1$.
Let $T_{\tL_0}(\tL_1) = \tL_0  + \tL_1 + \tL_1\otimes_{\tL_0} \tL_1 + \cdots \tL_1^{\otimes_{\tL_0} t}+\cdots$ be the tensor algebra and let $(R^{\bbfg})$ be the ideal generated by $R^{\bbfg}$, then $\tL^{\bbfg}(n) \simeq T_{\tL_0}(\tL_1) /(R^{\bbfg})$ is a quadratic algebra.
The quadratic dual $\tL^!(n)$ of $\tL(n)$ is defined as the quotient $ T_{\tL_0}(D\tL_1) /(R^{\bbfg,\perp})$, where $D\tL_1$ is the dual space of $\LL_1$ and $R^{\bbfg, \perp}$ is the annihilator of $R^{\bbfg}$ in $D(\tL_1\otimes_{\tL_0} \tL_1)$.
From a different view point, Theorem ~\ref{trata} is restated as following theorem:

\begin{theorem}\label{ncAkszul}
Assume that $m \ge 3$.

$\tL^{\bbfg}(n)$ is an almost Koszul algebra of type $(n+1, m-1)$, and its quadratical dual $\tL^{\bbfg,!}(n)$ is almost Koszul algebra of type $(m-1, n+1)$.

Both $\tL^{\bbfg}(n)$ and $\tL^{\bbfg,!}(n)$ are  periodic algebras.

More precisely, the minimal periodicity of $\tL^{\bbfg}(n)$ is $(n+1)m$,  and the minimal periodicity of $\tL^{\bbfg,!}(n)$ is $(n+1)(n+2)$.
\end{theorem}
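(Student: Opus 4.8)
The plan is to extract both statements from the explicit minimal projective resolution produced in Lemma~\ref{hproofof}, and then to convert the combinatorics of its terminal syzygy into periodicity.

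\textbf{Almost Koszulity.} First I would note that $\tL^{\bbfg}(n)_t=0$ for $t>n+1$ by Lemma~\ref{pathlengthl}, since a path of length exceeding $n+1$ must repeat an arrow type and so is bound to zero, while $\tL^{\bbfg}(n)_{n+1}\neq 0$. By Lemma~\ref{hproofof}(a),(b) each $\tP^l(\tS(\bfi))$ is generated in degree $l$ for $0\le l\le m-1$, and by Lemma~\ref{hproofof}(d) the terminal kernel $\Ker f_{m-1}=\Omega^{m}(\tS(\bfi))=\soc\tP(\bvi{\bfb})$ is the simple $\tS(\bvi{\bfb+\bfe})$ concentrated in the single degree $(m-1)+(n+1)=n+m$. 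These are precisely the defining conditions of $(n+1,q)$-Koszulity with $q=m-1$ and $p+q=(n+1)+(m-1)=n+m$, so $\tL^{\bbfg}(n)$ is almost Koszul of type $(n+1,m-1)$; this is the announced restatement of Theorem~\ref{trata}. For the quadratic dual I would invoke the almost Koszul duality of Brenner--Butler--King \cite{bbk}, under which type $(p,q)$ passes to type $(q,p)$, giving type $(m-1,n+1)$ for $\tL^{\bbfg,!}(n)$.

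\textbf{Periodicity of $\tL^{\bbfg}(n)$.} Being a twisted trivial extension (Theorem~\ref{twistext}), $\tL^{\bbfg}(n)$ is self-injective, so $\Omega$ is an autoequivalence of the stable category. The engine is the identity $\Omega^{m}(\tS(\bfi))\simeq \tS(\bvi{\bfb+\bfe})$ from Lemma~\ref{hproofof}(d); a direct evaluation of $\bvi{\bfb+\bfe}$ yields the vertex bijection
$$\phi(i_1,\ldots,i_n)=(m+n-|\bfi|,\,i_1,\ldots,i_{n-1}),$$
so that $\Omega^{km}(\tS(\bfi))\simeq\tS(\phi^{k}(\bfi))$ and the $\Omega$-period of $\tS(\bfi)$ is $m$ times the order of $\phi$ on $\bfi$. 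To compute that order I would set $g_k=m+n-|\phi^{k}(\bfi)|$; since $\phi^{k}(\bfi)=(g_{k-1},\ldots,g_{k-n})$ with initial values $g_{-s}=i_s$, the definition of $\phi$ gives the relation $g_k+g_{k-1}+\cdots+g_{k-n}=m+n$, whence $g_k=g_{k-(n+1)}$ and $\phi^{n+1}=\mathrm{id}$. A generic vertex is moved with full order $n+1$ (a fixed point would force all coordinates equal and $(n+1)\mid(m+n)$), and since the intermediate syzygies $\Omega^{l}(\tS(\bfi))$ with $l\not\equiv 0\pmod m$ are never simple, no shorter period can occur. Hence every simple is periodic with period dividing $(n+1)m$ and some simple attains it, so $\tL^{\bbfg}(n)$ is a periodic algebra of minimal period $(n+1)m$.

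\textbf{Period of the dual, and the main obstacle.} For $\tL^{\bbfg,!}(n)$, almost Koszul of type $(m-1,n+1)$ and likewise self-injective, the resolution of a simple is linear for $n+2$ steps, and $\Omega^{n+2}$ sends simples to simples by a shift permutation $\psi$, so its minimal period is $(n+2)$ times the order of $\psi$. I would pin down $\psi$ either by rerunning the cuboid bookkeeping of Section~3 directly for $\tL^{\bbfg,!}(n)$, or by matching $\psi$ to $\phi$ through the common Nakayama permutation governing almost Koszul duality; either way $\psi$ has order $n+1$, giving minimal period $(n+1)(n+2)$. This dual computation is the genuine obstacle: unlike $\tL^{\bbfg}(n)$ we have no ready-made resolution for $\tL^{\bbfg,!}(n)$, so one must either carry out an independent syzygy analysis or make the duality relation between $\phi$ and $\psi$ fully precise, and it is exactly there that $\mathrm{ord}(\psi)=n+1$ has to be established.
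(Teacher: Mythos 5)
Your first three steps coincide with the paper's own proof. Almost Koszulity of type $(n+1,m-1)$ is read off from Lemma \ref{hproofof}, the type $(m-1,n+1)$ of the dual is Proposition 3.11 of \cite{bbk}, and the period of $\tL^{\bbfg}(n)$ comes from $\Omega^{m}\tS(\bfi)\simeq \tS(\bvi{\bfb(\bfi)})$ together with the fact that the induced vertex map $\omega(\bfi)=(m+n-|\bfi|,i_1,\ldots,i_{n-1})$ has order $n+1$; your recurrence $g_k+g_{k-1}+\cdots+g_{k-n}=m+n$ is just a repackaging of the paper's computation of $\omega^t$. Two small remarks there: for minimality you need a vertex whose whole $\omega$-orbit has length exactly $n+1$, not merely the absence of fixed points (e.g.\ $\bfi=(1,\ldots,1)$, whose extended tuple $(m-1,1,\ldots,1)$ is aperiodic under rotation since $m\ge 3$), and your observation that the intermediate syzygies $\Omega^{l}\tS(\bfi)$, $l\not\equiv 0 \pmod m$, are never simple is indeed needed to rule out periods not divisible by $m$ — the paper leaves this implicit, so here you are slightly more careful than the text.

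The genuine gap is exactly the one you flagged yourself: the order of the permutation $\psi$ induced by $\Omega^{n+2}$ on the simples of $\tL^{\bbfg,!}(n)$. You offer two possible routes but complete neither, and ``either way $\psi$ has order $n+1$'' is asserted, not proved; as written, the last claim of the theorem is not established. The paper closes this in one line with Proposition 3.14 of \cite{bbk}: if $N(\tL^{\bbfg}(n),m-1)$ is the matrix of the permutation of the simples of $\tL^{\bbfg}(n)$ given by $\Omega^{m}$, and $N(\tL^{\bbfg,!}(n),n+1)$ is the matrix of the permutation of the simples of the dual given by $\Omega^{n+2}$, then these two matrices are transposes of one another. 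A permutation matrix and its transpose are mutually inverse and hence have the same order, so the order of $\psi$ equals the order of $\omega$, namely $n+1$, giving minimal period $(n+1)(n+2)$ for $\tL^{\bbfg,!}(n)$. In other words, your second suggested route (relating $\psi$ to $\phi$ through almost Koszul duality) is the right one, but it requires the precise transpose statement from \cite{bbk} rather than an appeal to a ``common Nakayama permutation''; supplying that citation (or reproving the transpose relation) is what is missing from your proposal.
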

\begin{proof}
If follows directly from Lemma ~\ref{hproofof} that $\tL^{\bbfg}(n)$ is an almost Koszul algebra of type $(n+1, m-1)$.
So $\tL^{\bbfg,!}(n)$ is almost Koszul algebra of type $(m-1, n+1)$, by Proposition 3.11 of \cite{bbk}.

Note that for each $\bfi \in \tQ(n)_0$, by (d) of Lemma ~\ref{hproofof}, we have that $\Omega^{m} \tS(\bfi) \simeq \tS(\bvi{\bfb(\bfi)}) = \tS(\omega(\bfi))$.
If $\bfi =(i_1,\ldots,i_n)$, then $$\bfb(\bfi) =(m+n-1-\sum\limits_{t=1}^{n} i_t, i_1-1, i_2-1, \ldots, i_{n}-1) $$
and we have
$$\omega(\bfi) = \bvi{\bfb(\bfi)} = (m+n-\sum\limits_{t=1}^{n} i_t, i_1, \ldots, i_{n-1}).$$
Thus $\omega^t(\bfi) = (i_{n-t+2},\ldots, i_{n}, m+n-\sum\limits_{t=1}^{n} i_t, i_1, \ldots, i_{n-t}),$ and $\omega^{n+1}(\bfi)= \bfi$.
This proves that $\Omega^{m(n+1)} \tS(\bfi) =\tS(\bfi)$, and $n+1$ is minimal such that this holds for all $\bfi \in \tQ(n)_0$.
So the minimal periodicity of $\tL^{\bbfg}(n)$ is $(n+1)m$.

Let $N(\tL^{\bbfg}(n),m-1)$ be the matrix with $$N(\tL^{\bbfg}(n),m-1)_{\bfi\bfj} =\dmn_k e_{\bfi} \Omega^{m}\tL^{\bbfg}_0(n)e_{\bfj}.$$
$N(\tL^{\bbfg}(n),m-1)$ is the matrix of the permutation of the simples $\tL^{\bbfg}(n)$-modules defined by $\Omega^{m}$, so the order of $N(\tL^{\bbfg}(n),m-1)$ is $n+1$.

Now let $N(\tL^{\bbfg,!}(n),n+1)$ be the matrix with $$N(\tL^{\bbfg,!}(n),n+1)_{\bfi\bfj} =\dmn_k e_{\bfi} \Omega^{n+2}\tL^{\bbfg,!}_0(n)e_{\bfj}.$$
By Proposition 3.14 of \cite{bbk}, $N(\tL^{\bbfg,!}(n),n+1)$ is the matrix of the permutation of the simples $\tL^{\bbfg,!}(n)$-modules defined by $\Omega^{n+2}$ and it is the transpose of  $N(\tL^{\bbfg}(n),m-1)$.
So they have the same order $n+1$, and this proves that the minimal periodicity of $\tL^{\bbfg,!}(n)$ is $(n+1)(n+2)$.
\end{proof}

\section{$(n+1)$-cuboid Truncations}

Assume that $\LL(n)=\LL^{\bfg}(n)$ is an $n$-translation algebra with admissible $n$-translation quiver $Q(n)$ in this section.
Let $\bbfg=(\bfg,g')$ be  as  defined in Section 2, let $\tL(n)=\tL^{\bbfg}(n)$ be a twisted trivial extension of $\LL(n)$.
Now consider truncations on $\olL(n) = \olL^{\bbfg}(n)=\tL^{\bbfg}(n)\# \mathbb Z^*$ and we write $\olrho(n)$ for $\olrho^{\bbfg}(n)$.
By Theorem ~\ref{ncAkszul} and Proposition 5.4 of \cite{gu14}, $\olL(n)$ is an $n$-translation algebra with stable  $n$-translation quiver $\olQ(n)$ and $n$-translation $\otau_{n} \bfi = \bfi-\bfe_{n+1}$.
Write $\otau = \otau_{n}$ for the $n$-translation of $\olQ(n)$.

Let $\olS(\bfi)$ be the simple $\olL(n)$-module at $\bfi\in \olQ(n)_0$ and let $\olP(\bfi)$ and $\olI(\bfi)$ be its projective cover and injective envelop as $\olL(n)$-modules, respectively.
Conventionally,  set $\olS(\bfi)$, $\olP(\bfi)$ and $\olI(\bfi)$ to be zero when $\bfi \not \in \olQ(n)_0$.

For each $\bfi =(i_1,\ldots,i_n,i_{n+1}) \in \olQ(n)_0$, let $\bfi' = (i_1,\ldots,i_n)$ be its $n$-truncation in $\tQ(n)_0 = Q(n)_0$.
Define its $(n+1)$-cuboid $C\sk{n}(\bfi) = C\sk{n}(\bfi')$.
We have characterization of the linear part of the  projective of simple $\olL^{\bbfg}(n)$-modules using the $(n+1)$-cuboids $C\sk{n}(\bfi)$ similar to Proposition ~\ref{pprojresta}.
Let \eqqc{hprojresba}{\cdots \lrw \olP^2(\olS(\bfi))\stackrel{f_2}{\lrw} \olP^1(\olS(\bfi))\stackrel{f_1}{\lrw} \olP^0(\olS(\bfi))\stackrel{f_0}{\lrw}  \olS(\bfi) \lrw 0 }
be a minimal projective resolution of the simple $\olL$-modules $\olS(\bfi)$ of corresponding to the vertex $\bfi\in \olQ(n)_0$.
If $\bfi\in \olQ_0$, call $\bfa$ an {\em $\bfi$-quiver vertex} if $\bbvi{\bfa} \in \olQ(n)_0$.

Consider the linear part of this projective resolution.
We have
\begin{proposition}\label{pprojresba} For $l=0, 1, \ldots, m-1$ and $\bfi\in \olQ(n)_0$, we have that $\olP^l(\olS(\bfi))$ is generated in degree $l$ and
\eqqc{bhprtml}{\olP^l(\olS(\bfi)) \simeq \bigoplus_{\bfa\in C\sk{n}_l({\bfi})} \olP(\bbvi{\bfa}).}

Further more,  $C\sk{n}_{m-1}({\bfi}) =\{\bfb\}$, and $\Ker f_{m-1} \simeq \soc \olP(\bbvi{\bfb}) =\olS(\bbvi{\bfb + \bfe})$ is simple.
\end{proposition}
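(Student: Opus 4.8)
The plan is to deduce Proposition \ref{pprojresba} from the structure already established for $\tL^{\bbfg}(n)$ in Lemma \ref{hproofof}, exploiting the fact that $\olL(n) = \tL^{\bbfg}(n)\#\mathbb{Z}^*$ is a smash product. Recall from \cite{gu14} that this smash product identifies $\mmod\olL(n)$ with the category $\grm\tL^{\bbfg}(n)$ of finitely generated $\mathbb{Z}$-graded $\tL^{\bbfg}(n)$-modules, the relevant $\mathbb{Z}$-grading being the one counting the type-$(n+1)$ arrows, i.e. the grading by which the twisted trivial extension is built. Under this identification $\olS(\bfi)$ corresponds to $\tS(\bfi')$ placed in $\mathbb{Z}$-degree $v=i_{n+1}$ (where $\bfi'$ is the $n$-truncation), and $\olP(\bbvi{\bfa})$ corresponds to the graded projective cover $\tP(\bvi{\bfa})$ with its top in $\mathbb{Z}$-degree $i_{n+1}+a_{n+1}$. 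The degree-forgetting functor $F\colon\grm\tL^{\bbfg}(n)\to\mmod\tL^{\bbfg}(n)$ is exact, sends $\olP(\bfj)$ to $\tP(\pi\bfj)$, and reflects minimality since $\rad$ is homogeneous.

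First I would observe that the minimal projective resolution \eqref{hprojres} produced in Lemma \ref{hproofof} is homogeneous for this $\mathbb{Z}$-grading: each $f_l$ is assembled from the elements $\ta{l-1}{\bvi{\bfa}}$, which are combinations of single arrows $\gkk{t}{\cdot}$, and an arrow raises the $\mathbb{Z}$-degree by $1$ exactly when its type is $n+1$ and by $0$ otherwise. By Lemma \ref{pathlengthl} any bound path from $\bfi'$ to $\bvi{\bfa}$ uses precisely $a_{n+1}$ arrows of type $n+1$, so the summand $\tP(\bvi{\bfa})$ occurring in $\tP^l(\tS(\bfi'))$ sits at $\mathbb{Z}$-level $v+a_{n+1}$, which is exactly the last coordinate of $\bbvi{\bfa}$. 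A homogeneous resolution that is minimal as an ungraded resolution is a minimal graded resolution, so reinterpreting \eqref{hprojres} through $\grm\tL^{\bbfg}(n)\simeq\mmod\olL(n)$ turns the $l$-th term into $\bigoplus_{\bfa\in C\sk{n}_l(\bfi)}\olP(\bbvi{\bfa})$, generated in degree $l$, establishing \eqref{bhprtml} for $0\le l\le m-1$.

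For the final assertion I would note that $C\sk{n}_{m-1}(\bfi)=C\sk{n}_{m-1}(\bfi')=\{\bfb\}$ is unchanged, since the cuboid depends only on $\bfi'$. Transporting part (d) of Lemma \ref{hproofof} across the equivalence, $\Ker f_{m-1}$ becomes $\soc\olP(\bbvi{\bfb})$, and the graded socle of $\olP(\bbvi{\bfb})$ (the analogue for $\olL(n)$ of the second formula in Lemma \ref{projrefine}) is the simple at the end vertex of its $n$-hammock, namely $\olS(\bbvi{\bfb+\bfe})$.

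The main obstacle I expect is the bookkeeping of the $\mathbb{Z}$-coordinate in this last step. Over $\tL^{\bbfg}(n)$ one has $\bvi{\bfa+\bfe}=\bvi{\bfa}$, so the socle simple is combinatorially identified with the head, which is how the periodicity $\om{m}\tS(\bfi)\simeq\tS(\bvi{\bfb(\bfi)})$ manifests; but in the covering $\bbvi{\bfb+\bfe}\neq\bbvi{\bfb}$, the two differing by $+1$ in the $\mathbb{Z}$-coordinate. One must therefore verify that the socle of $\olP(\bbvi{\bfb})$ sits one $\mathbb{Z}$-level higher, at $v+b_{n+1}+1$, which is precisely where $\bbvi{\bfb+\bfe}$ lives; this is exactly how the closed periodicity over $\tL^{\bbfg}(n)$ unfolds into the infinite staircase of syzygies over $\olL(n)$. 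Once the grading is tracked no new combinatorics are required: equivalently, the entire induction of Lemma \ref{hproofof} may be repeated verbatim with $\bbvi{\cdot}$ in place of $\bvi{\cdot}$, every dimension count being identical because $C\sk{n}(\bfi)$, the defining relations, and the property that bound paths of distinct types have multiplicity at most one are all unaffected by adjoining the $\mathbb{Z}$-coordinate.
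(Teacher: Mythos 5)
Your proof is correct, but it takes a genuinely different route from the paper, which in fact offers no separate argument for this proposition: the text merely announces the characterization as ``similar to Proposition \ref{pprojresta}'', i.e.\ it implicitly re-runs the induction of Lemma \ref{hproofof} with $\bbvi{\cdot}$ in place of $\bvi{\cdot}$ --- the option you mention only as a fallback in your final sentence. Your primary argument instead deduces the proposition formally from Lemma \ref{hproofof} through the equivalence $\mmod \olL(n)\simeq \grm \tL^{\bbfg}(n)$ coming from the smash product $\olL(n)=\tL^{\bbfg}(n)\#\mathbb Z^*$, where the relevant $\mathbb Z$-grading counts type-$(n+1)$ arrows (a legitimate grading, since every relation in $\tilde{\rho}^{\bbfg}(n)$ is homogeneous for it). The degree bookkeeping you perform --- assigning $\ep{l}{\bvi{\bfa}}$ the $\mathbb Z$-degree $i_{n+1}+a_{n+1}$, so that every term of $\ta{l-1}{\bvi{\bfa}}$ has that same degree whether or not the arrow involved has type $n+1$ --- is exactly what is needed, and your treatment of the socle subtlety (downstairs $\bvi{\bfb+\bfe}=\bvi{\bfb}$, while upstairs $\bbvi{\bfb+\bfe}=\bbvi{\bfb}+\bfe\sk{n+1}_{n+1}$ sits one $\mathbb Z$-level higher) is precisely the point the paper silently absorbs into the statement. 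What your route buys is economy and transparency: no combinatorics is re-proved, and the period-$m$ syzygy over $\tL^{\bbfg}(n)$ visibly unfolds into the staircase of syzygies over the cover $\olL(n)$; what the paper's implicit route buys is independence from the covering formalism of \cite{gu14} beyond the definition of $\olL(n)$ itself. One small inaccuracy: you cite Lemma \ref{pathlengthl} for the claim that a bound path of length $l=|\bfa|$ from $\bfi'$ to $\bvi{\bfa}$ uses exactly $a_{n+1}$ arrows of type $n+1$, but that lemma only concerns paths of length at most $n+1$; the correct justification is a net-displacement count (a path using $c_t$ arrows of type $t$ has $c_t-a_t$ independent of $t$, so length $|\bfa|$ forces $c_{n+1}=a_{n+1}$), or, more simply, your inductive degree assignment already renders this path-counting unnecessary.
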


Recall that a full bound subquiver $Q'$  of $\olQ(n)$ is called a  {\em $\otau_n$-slice} of $Q$ if it has the following property \cite{g12}:
\begin{enumerate}
\item[(a)] for each vertex $\bfi $ of $\olQ(n)$, the intersection of the $\otau_{n}$-orbit of $v$ and the vertex set of $Q'$ is a single-point set;
\end{enumerate}
A $\otau_{n}$-slice is called a {\em complete $\otau_{n}$-slice}, if it also has the following property:
\begin{enumerate}
\item[(b)] $Q'$ is path complete in the sense that for any path $p: v_0 \lrw v_1 \lrw \ldots \lrw v_t$ of $\olQ(n)$ with $v_0$ and $v_t$ in $Q'$, the whole path $p$ lies in $Q'$.
\end{enumerate}
The algebra defined by a complete $\otau$-slice of $\olQ(n)$ is called a {\em $\otau$-slice algebra} of $\olQ(n)$ (or of $\olL(n)$).

Let $Q(n)\times \{t\}$ be the full bound subquiver of $\olQ(n)$ with vertex set
$Q(n)_0\times \{t\}$ for any $t\in \mathbb Z$.
It is easy to check that we have the following result.

\begin{proposition}\label{tauslice} $Q(n)\times \{t\}$ are complete $\otau$-slices of $\olQ(n)$ for all $t$.
\end{proposition}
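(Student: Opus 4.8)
The plan is to read off both defining conditions of a complete $\otau$-slice directly from the explicit description \eqref{barquiver} of $\olQ(n)$ together with the formula $\otau\bfi = \bfi-\bfe_{n+1}$ from Theorem~\ref{traba}. Throughout I write a vertex as a pair $(\bfi,v)$ with $\bfi\in Q(n)_0$ and $v\in\mathbb Z$, so that $\otau(\bfi,v)=(\bfi,v-1)$ shifts only the last coordinate.

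The single observation that powers the whole argument is the monotonicity of $v$ along arrows. By \eqref{barquiver} an arrow of type $t$ with $1\le t\le n$ keeps $v$ fixed, while an arrow of type $n+1$ sends $(\bfi,v)$ to $(\bfi-\bfe_n,v+1)$ and so raises $v$ by one. Consequently $v$ is non-decreasing along every path of $\olQ(n)$, and it increases strictly exactly at the type-$(n+1)$ arrows.

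First I would dispatch condition (a). Since $\otau$ only shifts the last coordinate, the $\otau$-orbit of $(\bfi,v)$ is $\{(\bfi,w)\mid w\in\mathbb Z\}$, whose intersection with the vertex set $Q(n)_0\times\{t\}$ is the single vertex $(\bfi,t)$; hence (a) holds for every $t$. Next I would verify condition (b) from the monotonicity: if a path $p$ has both endpoints in $Q(n)_0\times\{t\}$, then $v$ equals $t$ at the source and at the target, and being non-decreasing it is constant equal to $t$ along all of $p$. Thus every intermediate vertex lies in $Q(n)_0\times\{t\}$ and, since no arrow may raise $v$, every arrow of $p$ has type $\le n$ and so lies in $Q(n)_1\times\{t\}$; the whole path therefore lies in $Q(n)\times\{t\}$.

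It remains to confirm that $Q(n)\times\{t\}$ really is a full bound subquiver, which I would check against the three families of generators of $\olrho(n)$ in \eqref{barrelation} by the same level count: the relations in $\rho^{\bfg}\times\mathbb Z$ use only arrows of type $\le n$ and already live at a single level, whereas the purely type-$(n+1)$ relations and the mixed relations each contain a type-$(n+1)$ arrow and hence raise $v$, so their constituent paths cannot begin and end at a common level. I expect no genuine obstacle here; the only point requiring any care is precisely this bookkeeping, namely making sure that no relation path with both endpoints at level $t$ escapes the slice, and the level count in \eqref{barrelation} settles each case at once.
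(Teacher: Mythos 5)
Your proof is correct, and it supplies precisely the routine verification that the paper omits (the proposition is stated there with only the remark that it is ``easy to check''): the $\otau$-orbits are the fibres $\{(\bfi,w)\mid w\in\mathbb Z\}$, the $\mathbb Z$-level is non-decreasing along arrows of $\olQ(n)$ and strictly increases exactly at the type-$(n+1)$ arrows, and this settles both slice conditions at once. You were also right to notice that the definition of a $\otau$-slice presupposes the full-bound-subquiver property, and your level count over the three families of relations in \eqref{barrelation} correctly disposes of that point as well.
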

Let $\LL(n,t)$ be the algebra defined by the bound quiver $Q(n)\times \{t\}$.
$\LL(n,t)$ are isomorphic to $\LL^{\bfg} (n)$ for all $t$.

Now we define the {\em $(n+1)$-cuboid completion} $Q(n+1)(t)$ of $Q(n)\times \{t\}$ as the full subquiver of $\olQ(n)$ with the vertex set:
$$Q(n+1)(t)_0 = \cup_{\bfi \in Q(n)_0\times \{t\}}\{\bbvi{\bfa}|\bfa\in C\sk{n}(\bfi)\} .$$
Clearly, $Q(n+1)(t)$ are isomorphic  quivers for all $t$.

Let $Q(n+1)(1)\sk{t}$ and $ Q(n+1)\sk{t}$ be the full subquivers of  $Q(n+1)(1)$ and $Q(n+1)$ with vertex set $Q(n+1)(1)\sk{t}_0 =\{ \bfi = ( i_1, \ldots, i_t, 1, \ldots,1) \in Q(n+1)(1)_0\}$ and  $Q(n+1)\sk{t}_0 =\{ \bfi = (i_1, \ldots, i_t, 1, \ldots, 1) \in Q(n+1)_0\}$, respectively.

\begin{lemma}\label{boundsubq}
For $1\le t \le n+1$, $(Q(t),\rho(t))$ can be identified with $$(Q(n+1)(1)\sk{t}, \olrho(n)[Q(n+1)(1)\sk{t}])$$ as full bound subquivers.
\end{lemma}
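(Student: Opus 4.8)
The statement asserts that the $(n{+}1)$-cubic pyramid quiver $Q(t)$ (for $1\le t\le n{+}1$) with its defining relations $\rho(t)$ can be identified, as a bound quiver, with the full bound subquiver $(Q(n{+}1)(1)\sk{t}, \olrho(n)[Q(n{+}1)(1)\sk{t}])$ obtained by restricting $\olrho(n)$ to the vertex set $Q(n{+}1)(1)\sk{t}_0$. The plan is to exhibit an explicit bijection on vertices, check that it induces a bijection on arrows respecting types and endpoints, and then verify that the relations match under this identification. Because both sides are defined combinatorially by integral vectors, the heart of the matter is bookkeeping with the index conventions.

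**Step 1: the vertex bijection.**
First I would write down the candidate identification on vertices. A vertex of $Q(t)$ is a vector $(i_1,\ldots,i_t)\in\Zp^t$ satisfying the pyramid inequalities \eqref{vertexofQ} with $p=t$ and parameter $m$. On the other side, $Q(n{+}1)(1)\sk{t}_0$ consists of those vertices of $Q(n{+}1)(1)$ of the form $(i_1,\ldots,i_t,1,\ldots,1)$. The natural map is padding with ones: $(i_1,\ldots,i_t)\mapsto(i_1,\ldots,i_t,1,\ldots,1)$. I would verify that the defining inequalities of $Q(t)$ are exactly the inequalities defining $Q(n{+}1)(1)_0$ that survive after setting the last $n{+}1-t$ coordinates to $1$: for $s\le t$ the conditions coincide verbatim, and for $s>t$ the extra constraints $\sum_{u=1}^{s}i_u\le m+s-1$ become $\sum_{u=1}^{t}i_u + (s-t)\le m+s-1$, i.e.\ $\sum_{u=1}^{t}i_u\le m+t-1$, which is already implied by the $s=t$ inequality. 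This shows the padding map is a well-defined bijection onto $Q(n{+}1)(1)\sk{t}_0$.

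**Step 2: arrows and the full-bound-subquiver condition.**
Next I would match the arrows. An arrow $\gkk{r}{\bfi}$ of type $r$ in $Q(t)$ goes $\bfi\to\bfi(r)$ for $1\le r\le t$; under padding, $\bfi(r)$ still has trailing ones, so the image lies in $Q(n{+}1)(1)\sk{t}_0$, and no arrows of type $r>t$ can leave a padded vertex since $i_r=1$ forbids the operation $\bfi(r)$ raising an already-present one in the way that exits the subquiver. I must then confirm the crucial \emph{full bound subquiver} property from the definition preceding Lemma \ref{subbdquiver}: whenever a path $p$ in $Q(n{+}1)(1)$ appears in some relation of $\olrho(n)$ with nonzero coefficient and has both endpoints in $Q(n{+}1)(1)\sk{t}_0$, then $p$ lies entirely inside the subquiver. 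Since all intermediate vertices of such a commutativity or zero relation are obtained by the same coordinate operations, and by Lemma \ref{pathlengthl} a bound path consists of arrows of distinct types, the trailing coordinates stay at $1$ throughout, so every intermediate vertex is again padded. This gives the full-bound-subquiver condition and makes $\olrho(n)[Q(n{+}1)(1)\sk{t}]$ a genuine restriction.

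**Step 3: matching the relations, and the main obstacle.**
Finally I would compare $\rho(t)$ with the restricted relations. The zero relations $\gkk{r}{\bfi(r)}\gkk{r}{\bfi}$ and the commutation relations $d_{s,r,\cdots}\gkk{r}{\cdots}\gkk{s}{\bfi}-\gkk{s}{\cdots}\gkk{r}{\bfi}$ for $1\le r<s\le t$ survive restriction unchanged, so the only content is to check that \emph{no extra} relations of $\olrho(n)$ involving the types $t{+}1,\ldots,n{+}1$ contribute after restriction to $Q(n{+}1)(1)\sk{t}$. This is where I expect the real work: a relation of $\olrho(n)$ using an arrow of type $r>t$ starts at a vertex with $i_r>1$, which cannot be a padded vertex, so such relations have no endpoint in the subquiver and are discarded by the restriction $\olrho(n)[Q(n{+}1)(1)\sk{t}]$. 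The main obstacle is therefore bookkeeping the coefficients $d_{s,r,\bfi}$ from \eqref{elementg}--\eqref{therelation}: one must check that the structure constants $\bfg$ used to define $\rho(t)$ on $Q(t)$ agree with those induced by $\bbfg$ after restriction, i.e.\ that the chosen $\tilde g$'s are compatible with the coordinate-padding identification. Since the scalars are nonzero and there is at most one arrow between any pair of vertices (Lemma \ref{pathlengthl}), one can rescale the standard basis $\ep{l}{\cdot}$ so that the two relation sets coincide exactly, completing the identification as bound quivers and hence (via Lemma \ref{subbdquiver}) as algebras.
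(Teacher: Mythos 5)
Your proposal has a genuine gap, and it sits at the foundation. In Step 1 you appeal to ``the inequalities defining $Q(n+1)(1)_0$'', but $Q(n+1)(1)_0$ is not defined by inequalities: by definition it is the union of cuboid images $\bigcup_{\bfi\in Q(n)_0\times\{1\}}\{\bbvi{\bfa}\mid\bfa\in C\sk{n}(\bfi)\}$ inside $\olQ(n)_0$, whereas the pyramid inequalities \eqref{vertexofQ} define the a priori different set $Q(n+1)_0$. The assertion that these two sets coincide is precisely the hard content of the lemma (it is the case $t=n+1$), and your argument silently assumes it: your padding computation correctly identifies $Q(t)_0$ with $Q(n+1)\sk{t}_0$, the padded vertices of the pyramid quiver, but the lemma concerns $Q(n+1)(1)\sk{t}_0$, the padded vertices of the cuboid completion. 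The paper's proof is organized around exactly this point: it inducts on $n$, so that the cases $t\le n$ follow from the inductive hypothesis together with the identification $Q(n)\times\{1\}=Q(n+1)(1)\sk{n}$, and the remaining case $t=n+1$ is settled by two explicit inclusions. For $Q(n+1)_0\subset Q(n+1)(1)_0$, a vertex $\bfi=(i_1,\ldots,i_{n+1})\in Q(n+1)_0$ is realized as $\bar{\bfv}^{\bfi'}(\bfa)$ with $\bfi'=(i_1,\ldots,i_{n-1},\,i_n+i_{n+1}-1,\,1)\in Q(n)_0\times\{1\}$ and $\bfa=(0,\ldots,0,i_{n+1}-1)\in C\sk{n}(\bfi')$; for the reverse inclusion, given $\bfi=\bar{\bfv}^{\bfi'}(\bfa)$ with $\bfa\in C\sk{n}(\bfi')$, one verifies $\sum_{u=1}^{s}i_u\le m+s-1$ for all $s$ using the cuboid bound $a_1\le b_1=m+n-1-\sum_{u=1}^{n}i'_u$. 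Neither computation, nor any substitute, appears in your proposal.

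This circularity propagates into Steps 2 and 3: until you know which padded vectors lie in $Q(n+1)(1)_0$, you cannot enumerate the arrows of $Q(n+1)(1)\sk{t}$ nor decide which relations of $\olrho(n)$ survive restriction. Moreover, the property to be checked is that of a full bound subquiver of $(\olQ(n),\olrho(n))$, not of $Q(n+1)(1)$; concretely, for the commutativity relations one must show that if $\bfj,\bfj(t),\bfj(s,t)\in Q(n+1)(1)_0$ and $\bfj(s)\in\olQ(n)_0$, then $\bfj(s)\in Q(n+1)(1)_0$. The paper proves this using the cuboid structure (membership of $\bfe_t+(j_{n+1}-1)\bfe_{n+1}$ and $\bfe_s+\bfe_t+(j_{n+1}-1)\bfe_{n+1}$ in $C\sk{n}(\bfi)$ forces membership of $\bfe_s+(j_{n+1}-1)\bfe_{n+1}$), a statement about the completion itself rather than about paths between padded vertices; also note that your appeal to Lemma \ref{pathlengthl} does not cover the zero relations $\gkk{r}{\bfi(r)}\gkk{r}{\bfi}$, whose paths repeat a type and are not bound paths. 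To repair the proof you should either set up the induction on $n$ and prove the case $t=n+1$ via the two inclusions above, or, if you want a direct argument for each $t$, supply the inclusion $Q(n+1)(1)_0\subset Q(n+1)_0$ explicitly --- it cannot be avoided.
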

\begin{proof}
We prove by using induction on $n$ and $t$.
The assertion clearly holds for $n=0$ since  $Q(1)(1)\sk{1} = Q(1)$.

Assume that $n>0$ and the lemma holds for $n' < n$.
By inductive assumption $(Q(t),\rho(t))$ can be identify $Q(n)(1)\sk{t}$, and $Q(n)(1)=Q(n)(1)\sk{n}$, which is identified with $Q(n)$ and with  $Q(n)\times \{1\} = Q(n+1)(1)\sk{n}$, a full subquiver of $Q(n+1)$ inside $Q(n+1)(1)$, if $t \le n$.

We need only to prove the case of $t=n+1$.

For any $\bfi = (i_1,\ldots,i_n,i_{n+1})\in Q(n+1)_0$, we have that $(i_1,\ldots,i_n) \in Q(n)_0$ and $i_{n+1}\le m+n-1-\sum_{t=1}^n i_t$, by definition.
It follows easily that $i_n+i_{n+1}-1 \le m+n-1-\sum_{t=1}^{n-1} i_t$, thus $\bfi'=(i_1,\ldots, i_{n-1}, i'_n=i_n+i_{n+1}-1,1) \in Q(n)_0\times \{1\}$.
Let $\bfa=(0,\ldots,0, i_{n+1}-1)$, then $\bfa\in C\sk{n}(\bfi')$,  and we have that
$$\bar{\bfv}^{\bfi'}(\bfa) = (i_1,\ldots,i_n,i_{n+1}) =\bfi.$$
This proves that $Q(n+1)_0\subset Q(n+1)(1)_0$.

On the other hand, for any $\bfi = (i_1, \ldots, i_n, i_{n+1}) \in Q(n+1)(1)$, there is an $\bfi'= (i_1',\ldots,i_n',1) \in Q(n+1)_0 \times\{1\}$ such that $\bfi=\bar{\bfv}^{\bfi'}(\bfa)$ for some $\bfa\in C\sk{n}(\bfi')$.
Thus $i_t = i_t'+a_t-a_{t+1}>1$ for $t\le n$ and $i_{n+1}=1+a_{n+1}$.
For $1\le s \le n$, we have
$$\arr{rcl}{\sum\limits_{t=1}^s i_t &=&\sum_{t=1}^s i'_t +a_1-a_{s+1} \\ &\le &  \sum_{t=1}^s i'_t + m+n-1-\sum\limits_{t=1}^{n} i'_t -a_{s+1}\\
&=& m+n-1-\sum\limits_{t=s+1}^{n} i'_t -a_{s+1} \le m+s-1, }$$
and $$\sum\limits_{t=1}^{n+1} i_t =\sum_{t=1}^{n+1} i'_t +a_1\le m+n.$$
This proves that $\bfi \in Q(n+1)_0$ and thus $Q(n+1)_0= Q(n+1)(1)_0$.

Now we prove that $Q(n+1)(1)$ is a full bound subquiver of $\olQ(n)$.
Assume that $\bfj,\bfj(t),\bfj(s,t) \in Q(n+1)(1)_0$ and $\bfj(s) \in \olQ(n)_0$.
Then $\bfi = \bfj-(j_{n+1}-1)\bfe_{n+1} \in Q(n+1)(1)\sk{n}_0$ and $\bfi(s)=\bfj(s)-(j_{n+1}-1)\bfe_{n+1}, \bfi(t)=\bfj(t)-(j_{n+1}-1)\bfe_{n+1}, \bfi(s,t)=\bfj(s,t)-(j_{n+1}-1)\bfe_{n+1} \in Q(n+1)(1)\sk{n}_0 $.
Thus $\bfe_t+(j_{n+1}-1)\bfe_{n+1}, \bfe_t+\bfe_s +(j_{n+1}-1)\bfe_{n+1} \in C\sk{n}(\bfi)$, so $\bfe_s +(j_{n+1}-1)\bfe_{n+1} \in C\sk{n}(\bfi)$ and $\bfj(s)=\bvii({\bfe_s +(j_{n+1}-1)\bfe_{n+1} }) \in Q(n+1)(1)_0$.
This proves that $ (Q(n+1)(1), \olrho(n)[Q(n+1)(1)])$ is a full bound quiver of $(\olQ(n), \olrho(n))$, since elements of $\olrho(n)$ are either paths of length $2$, or linear combinations of two paths of length $2$ of the arrows of the same types.
\end{proof}

Write $\rho(n+1)(1)\sk{t}=\olrho(n)[Q(n+1)(1)\sk{t}]$, and $\rho(n+1)(t) = \olrho(n)[Q(n+1)(t)]$.
As a corollary of Lemma ~\ref{boundsubq}, we have
\begin{corollary}\label{cboundsubq}
$(Q(n+1)(t),\rho(n+1)(t))$ are full bound subquivers of $(\olQ(n), \olrho(n))$.
\end{corollary}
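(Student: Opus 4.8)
The final statement, Corollary~\ref{cboundsubq}, asserts that each $(Q(n+1)(t),\rho(n+1)(t))$ is a full bound subquiver of $(\olQ(n),\olrho(n))$. The plan is to reduce this directly to Lemma~\ref{boundsubq}, whose proof already establishes (in its last paragraph) that $(Q(n+1)(1),\olrho(n)[Q(n+1)(1)])$ is a full bound subquiver of $(\olQ(n),\olrho(n))$, and then to transport that conclusion across the $\mathbb Z$-grading index $t$ using the translation symmetry of $\olQ(n)$.

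\medskip

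\textbf{The main idea.} The key observation is that $\olQ(n)=\mathbb Z|_{(n-1)}Q(n)$ carries a $\mathbb Z$-action: the assignment $(\bfi,v)\mapsto(\bfi,v+1)$, or equivalently the shift $\bfi\mapsto\bfi+\bfe_{n+1}$ in the $(n+1)$-coordinate, is an automorphism of the bound quiver $(\olQ(n),\olrho(n))$. This is visible from the description \eqref{barquiver} of $\olQ(n)_0$ and $\olQ(n)_1$ and from the relations \eqref{barrelation}, all of which are invariant under translation in the last coordinate. Under this automorphism the complete $\otau$-slice $Q(n)\times\{1\}$ is carried to $Q(n)\times\{t\}$ (Proposition~\ref{tauslice} guarantees these are all complete $\otau$-slices), and the $(n+1)$-cuboid completion commutes with the shift because $C\sk{n}(\bfi)=C\sk{n}(\bfi')$ depends only on the $n$-truncation $\bfi'$, which is unchanged by translating in the $(n+1)$-direction. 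Hence the shift automorphism maps $Q(n+1)(1)$ isomorphically onto $Q(n+1)(t)$, and maps the restricted relation set $\rho(n+1)(1)$ onto $\rho(n+1)(t)$.

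\medskip

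\textbf{Steps in order.} First I would verify that the shift $\sigma_t\colon\bfi\mapsto\bfi+(t-1)\bfe_{n+1}$ is an isomorphism of bound quivers of $\olQ(n)$ onto itself, by checking it preserves the vertex set \eqref{barquiver}, sends each arrow $\gkk{s}{\bfi}$ to $\gkk{s}{\bfi+(t-1)\bfe_{n+1}}$, and carries the generating relations in \eqref{barrelation} to relations of the same form; this is immediate since none of the defining inequalities or relation coefficients involve the $(n+1)$-coordinate nontrivially. Second, I would note that $\sigma_t\bigl(Q(n)\times\{1\}\bigr)=Q(n)\times\{t\}$ and, using $C\sk{n}(\bbvi{\cdot})$-invariance under the shift, conclude $\sigma_t\bigl(Q(n+1)(1)\bigr)=Q(n+1)(t)$ as subquivers of $\olQ(n)$. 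Third, since being a full bound subquiver is a property preserved by an automorphism of the ambient bound quiver, the conclusion of Lemma~\ref{boundsubq} (for $t=1$) transports to every $t$, giving that $(Q(n+1)(t),\rho(n+1)(t))$ is a full bound subquiver with $\rho(n+1)(t)=\olrho(n)[Q(n+1)(t)]=\sigma_t(\rho(n+1)(1))$.

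\medskip

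\textbf{Anticipated obstacle.} The routine content is all elementary, so the only genuine point requiring care is confirming that the shift $\sigma_t$ is a \emph{bound}-quiver automorphism, i.e.\ that it respects $\olrho(n)$ and not merely the underlying quiver $\olQ(n)$. The potential subtlety is that the commutativity relations in \eqref{barrelation} carry scalar coefficients $d_{n+1,s,\bfi-\bfe_n+\bfe_{n+1}}$ that depend on the vertex, so one must check that these coefficients are compatible with the shift, or else argue—as the proof of Lemma~\ref{boundsubq} does—that what matters for the full-bound-subquiver property is only that $\olrho(n)$ consists of paths of length $2$ and two-term linear combinations of length-$2$ paths with arrows of the same types, a structural feature manifestly preserved by $\sigma_t$. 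I expect this to be the crux, and it is most cleanly handled by invoking exactly the structural description of $\olrho(n)$ used at the end of the proof of Lemma~\ref{boundsubq}, so that the corollary follows formally without re-examining the coefficients.
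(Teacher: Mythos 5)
Your proof is correct and takes essentially the same route the paper intends: the paper deduces the corollary directly from Lemma~\ref{boundsubq} (the case $t=1$), with the passage to general $t$ resting on the evident shift symmetry of $(\olQ(n),\olrho(n))$ in the last coordinate --- exactly the automorphism $\sigma_t$ you make explicit. Your handling of the potential subtlety is also sound: the coefficients $d_{n+1,s,\cdot}$ arising from the smash-product construction do not depend on the $\mathbb{Z}$-coordinate, and in any case the full-bound-subquiver property only concerns which paths occur with nonzero coefficient in $\olrho(n)$, a structural feature manifestly preserved by the shift.
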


Let $\LL(n+1)(t)$ be the algebra defined by the bound quiver $(Q(n+1)(t), \rho(n+1)(t))$ and they will be called   {\em $(n+1)$-cuboid truncations} of $\olL^{\bbfg}(n)$.

We have the following theorem.

\begin{theorem}\label{cuboidtruncation}
\begin{itemize}
\item[(a)] $\LL(n+1)(t)$  are all isomorphic to $\LL(n+1)(1)=\LL(n+1)$.

\item[(b)] Let $I(t)$ be the ideal of $\olL(n)$ generated by the set $\{e_{\bfi} \mid \bfi \in \olQ(n)_0\setminus Q(n+1)(t)_0 \}$, then $\LL(n+1) \simeq \olQ(n)/I(t)$ for all $t$.

\item[(c)] $\LL(n+1)$ is $n$-translation algebra.
\end{itemize}
\end{theorem}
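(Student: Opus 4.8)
The plan is to obtain (a) and (b) cheaply from the structures already set up, and to concentrate the work on (c). For (a), I would use that $\olL(n)=\tL^{\bbfg}(n)\#\mathbb Z^*$ is a smash product with $\mathbb Z$, so that $(\olQ(n),\olrho(n))$ carries the shift automorphism $\sigma$ acting on vertices by $(\bfj,v)\mapsto(\bfj,v+1)$ in the description \eqref{barquiver}. This $\sigma$ preserves $\olrho(n)$ and carries $Q(n+1)(t)$ onto $Q(n+1)(t+1)$; since $\rho(n+1)(t)$ is by definition the restriction of $\olrho(n)$, it restricts to an isomorphism of bound quivers, giving $\LL(n+1)(t)\cong\LL(n+1)(t+1)$ for all $t$, hence all isomorphic to $\LL(n+1)(1)=\LL(n+1)$. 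For (b), Corollary~\ref{cboundsubq} says $(Q(n+1)(t),\rho(n+1)(t))$ is a full bound subquiver of $(\olQ(n),\olrho(n))$, so I would apply Lemma~\ref{subbdquiver} with $I=I(t)$ to get $\LL(n+1)(t)=kQ(n+1)(t)/(\rho(n+1)(t))\cong\olL(n)/I(t)$; together with (a) this yields $\LL(n+1)\cong\olL(n)/I(t)$ for every $t$.

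For (c) I would first fix the combinatorial skeleton. By Lemma~\ref{boundsubq} in the case $t=n+1$, the quiver $Q(n+1)$ is identified with the $(n+1)$-cubic pyramid quiver and $\rho(n+1)$ with the restriction of $\olrho(n)$; in particular every bound path is a product of arrows of pairwise distinct types among the $n+1$ types (the analogue of Lemma~\ref{pathlengthl}), so $\LL(n+1)_t=0$ for $t>n+1$. I would then take as $n$-translation the map $\tau\bfi=\bfi-\bfe_{n+1}$ for $i_{n+1}>1$, with projective vertices $\mathcal P=\{\bfi\mid i_{n+1}=1\}$ and injective vertices $\mathcal I=\{\bfi\mid|\bfi|=m+n\}$, that is, the restriction to $Q(n+1)$ of the $n$-translation $\otau$ of $\olQ(n)$ from Theorem~\ref{traba}. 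Conditions (1) and (2) of the $n$-translation quiver definition come from the cuboid picture: the maximal bound paths from $\tau\bfi$ to $\bfi$ sweep out the hammock attached to $\tau\bfi$, have length $n+1$, and are unique up to scalar since $\dmn_k e_{\bfi}\LL(n+1)_{n+1}e_{\tau\bfi}\le1$. Conditions (3) and (4) I would inherit from $\olQ(n)$: because $Q(n+1)$ is a full bound subquiver containing, for each non-projective $\bfi$, the entire family of bound paths realizing $\otau\bfi\to\bfi$, the extension and path-completeness properties of the stable quiver $\olQ(n)$ restrict to $Q(n+1)$, the only change being that the cuboid boundary supplies precisely the new projective and injective vertices.

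It then remains to verify the $(n+1,q)$-Koszul property, namely that the minimal projective resolution of each simple $S(\bfi)$ over $\LL(n+1)$ has $P^l$ generated in degree $l$ with syzygies controlled by the cuboid associated to $\bfi$. The plan is to re-run the bookkeeping of Lemma~\ref{hproofof} and Proposition~\ref{pprojresba} inside the quotient $\LL(n+1)=\olL(n)/I(t)$: one shows $P^l(S(\bfi))\cong\bigoplus_{\bfa}P(\bvi{\bfa})$ indexed by the level-$l$ net of the cuboid, the crucial input being the alternating binomial dimension counts that force $\dmn_k e_{\bvi{\bfa'}}(\Ker f_l)_{l+1}=1$ exactly on cuboid vertices. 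The hard part will be this last step: passing to the quotient by $I(t)$ could in principle raise projective dimensions or create nonlinear syzygies at the truncation boundary, and one must exploit that the cuboid completion is engineered so that for every $\bfi$ on the slice $Q(n)\times\{t\}$ the whole cuboid $C\sk{n}(\bfi)$ and the bound paths it supports lie inside $Q(n+1)(t)$. Showing that this containment keeps the restricted complexes exact and degree-linear --- that no relation nor longer syzygy is introduced where the cuboid meets the boundary --- is the technical crux, and is where the combinatorics of Section~3 does the real work. Once linearity is secured, $\LL(n+1)$ is $(n+1,q)$-Koszul and, with the $n$-translation quiver structure above, is an $n$-translation algebra.
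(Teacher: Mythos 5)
Your outline is correct, and for (a) and (b) it agrees with the paper in substance: the paper gets both directly from Lemma \ref{boundsubq} (the $\mathbb Z$-shift underlying your automorphism $\sigma$ is exactly why the paper can assert that the quivers $Q(n+1)(t)$ are all isomorphic), and (b) is Lemma \ref{subbdquiver} applied to Corollary \ref{cboundsubq}, just as you say. The genuine divergence is in how the Koszulity half of (c) is closed. You propose to re-run the combinatorial bookkeeping of Lemma \ref{hproofof} and Proposition \ref{pprojresba} inside the quotient $\olL(n)/I(t)$, and you rightly flag the truncation boundary as the crux; the paper never redoes that combinatorics. Instead it applies the functor $\olL(n)/I\otimes_{\olL(n)}-$ to the already-established minimal resolution \eqref{hprojresba} of $\olS(\bfi)$ over $\olL(n)$, shows the resulting complex \eqref{projresnpo} of $\LL(n+1)$-modules is exact by a direct lifting argument (an element of $\Ker (1\otimes f_t)$ is written as $\bar{e}\otimes z$ with $z\in \Ker f_t = f_{t+1}(\olP^{t+1}(\olS(\bfi)))$, and a preimage $x$ of $z$ yields the preimage $\bar{e}\otimes x$), and then reads off degree-$l$ generation and the simple-or-zero final kernel from Proposition \ref{pprojresba}: summands $\olP(\bbvi{\bfa})$ with $\bbvi{\bfa}\notin Q(n+1)(1)_0$ are simply annihilated by the tensor, giving \eqref{bhprtmlnpo}. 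This functorial step is precisely what dissolves your ``technical crux'' --- no boundary dimension counts need to be re-verified. One caution about your version: the almost-Koszul condition must hold at \emph{every} vertex of $Q(n+1)$, not only at vertices of the slice $Q(n)\times\{t\}$; for off-slice vertices the cuboid genuinely exits the truncation (which is why the paper's final kernel can be zero rather than simple), so in your re-run the boundary case is not avoidable and would have to be argued --- which is exactly the content the paper's tensoring argument supplies for free. So your plan is workable but strictly more laborious; the paper's route is the economical way to finish the step you left open.
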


\begin{proof}
(a), (b) follows directly from Lemma ~\ref{boundsubq}.

Now we prove (c).
Identify $Q(n+1)$ with $Q(n+1)(1)$.
As a truncation of an $n$-translation quiver, it follows from (a) and (b) that $Q(n+1)$ is an $n$-translation quiver.

For each $\bfi \in Q(n+1)_0 = Q(n+1)(1)_0 \subset \olQ(n)_0$, we have a minimal projective resolution \eqref{hprojresba} of simple $\olL(n)$-module $\olS(\bfi)$.
Note that $\LL(n+1) \simeq \olL(n) / I$, where $I$ is the ideal generated by $\{ e_{\bfj}\mid  \bfj \in \olQ(n)_0\setminus Q(n+1)(1)_0\} $.
Since $Q(n+1)(1)_0$ is finite let $e=\sum\nolimits_{\bfi \in Q(n+1)(1)_0} e_{\bfi} $, then $\bar{e}$ is the unit of $\olL(n)/I$.
We have $S(\bfi) =\olS(\bfi)$ for $\bfi \in Q(n+1)(1)_0$.
Tensor \eqref{hprojresba} with $\olL(n)/I$, we get a complexes of $\LL(n+1)$-modules
\eqqc{projresnpo}{\arr{l}{\cdots \lrw \olL(n)/I\otimes_{\olL(n)}\olP^2(\olS(\bfi))\stackrel{1\otimes f_2}{\lrw} \olL(n)/I\otimes_{\olL(n)} \olP^1(\olS(\bfi)) \\ \qquad \stackrel{1\otimes f_1}{\lrw} \olL(n)/I\otimes_{\olL(n)} \olP^0(\olS(\bfi))\stackrel{1\otimes f_0}{\lrw}  \olL(n)/I\otimes_{\olL(n)}\olS(\bfi) = S(\bfi) \lrw 0 .}}
Note that $\olL(n)/I\otimes_{\olL(n)} \olP^t(\olS(\bfi)) \simeq \olP^t(\olS(\bfi))/ I\olP^t(\olS(\bfi))$.
For any $y=\sum_p \bar{a}_p \otimes y_p =\bar{e }\otimes \sum_p a_p y_p $ in $\Ker (1\otimes f_t)$, we have that $\sum_p a_p y_p \in \Ker f_t = f_{t+1}(\olP^{t+1}(\olS(\bfi)))$ and hence there is an $x \in \olP^{t+1}(\olS(\bfi))$ such that $f_{t+1}(x)= \sum_p a_p y_p $.
Thus $(1\otimes f_{t+1})(\bar{e } \otimes x) = y$.
This proves that \eqref{projresnpo} is exact and hence a projective resolution $S(\bfi)$ as $\LL(n+1)$-modules.

Note that $\olL(n)/I\otimes_{\olL(n)} \olP^t(\olS(\bfi)) \simeq \olP^t(\olS(\bfi))/ I\olP^t(\olS(\bfi))$.
Thus if $\bfi \in \olQ(n)_0\setminus Q(n+1)(1)_0$, then $\olL(n) / I \otimes_{\olL(n)} \olL(n) e_{\bfi} = (\olL(n)/I \olL(n)) e_{\bfi} \otimes_{\olL(n)} e_{\bfi} =0$ and if $\bfi \in Q(n+1)(1)_0$, then $\olL(n)/I\otimes_{\olL(n)} \olL(n)e_{\bfi} =(\olL(n)/I \olL(n) e_{\bfi} \simeq \LL(n+1) e_{\bfi}.$
If $\bfi\in Q(n+1)(1)_0$, by Proposition ~\ref{pprojresba}, we have for $l=0, 1, \ldots, m-1$,
\eqqc{bhprtmlnpo}{\olL(n)/I\otimes_{\olL(n)} \olP^l(\olS(\bfi)) \simeq \bigoplus\limits_{\bfa\in C\sk{n}_l({\bfi}), \bvii{\bfa}\in  Q(n+1)(1)_0} \LL(n+1) (1) e_{\bbvi{\bfa}}}
is generated in degree $l$ and $\Ker (1\otimes f_{m-1})$ is either simple or zero.

This proves that $\LL(n+1)$ is almost Koszul of type $(n+1,m-1)$ and thus is $n$-translation algebra.

\end{proof}

We call $\LL(n+1)$ a {\em $(n+1)$-cuboid completion of $\LL(n)$}.

\section{Admissibility of Pyramid $n$-cubic Algebra}

Now fix $m \ge 3$, and a integer $n\ge 1$.
Let $Q(n)$ and $\rho(n)=\rho^{\bfg}(n)$  be the data for a pyramid shaped $n$-cubic quiver $Q(n) = Q^{\bfg}(n)$ as defined in \eqref{thequiver}, \eqref{elementg} and \eqref{therelation}, and let $\Lambda(n)$ be the pyramid $n$-cubic algebra defined by the bound quiver $(Q(n),\rho(n))$.

For a bound path $p$ from $i$ to $j$ such that any of bound path from $i$ to $j$ of the same length as $p$ is linearly dependent on $p$, recall that $p$ is {\em left stark of degree $t$} with respect to $i'$ if $pw$ is a bound element for any bound element $w$ from $i'$ to $i$ of length $t< n+1-l(p)$; and $p$ is {\em right stark of degree $t$} with respect to $j'$ if $wp$ is a bound element for any bound element $w'$ from $j$ to $j'$ of length $t< n+1-l(p)$.
A bound path $p$ is {\em shiftable} if it is linearly dependent to paths of the form $p'p''$ with $p''$ is trivial or linearly dependent on paths passing through no injective vertex and $p'$ is trivial or linearly dependent on paths passing through no projective vertex.

Recall that an $n$-translation quiver $Q$ is called {\em  admissible} if it satisfies the following conditions:
\begin{enumerate}
\item[(i)] For each bound path $p$, there are paths $q'$ and $q''$ such that $q'pq''$ is a bound path of length $n+1$.
\item[(ii)] Any bound path $p$ from a non-injective vertex $i$ to a non-projective vertex $j$ is linearly dependent to shiftable paths.
\item[(iii)] Let $i$ be a non-projective vertex.
     Let $p$ be a bound path ending at $i$ and let $q$ be a bound path starting at $\tau i$ with $l(p)+l(q) \le n$.
     If $p$ passes a projective vertex and $q$ passes an injective vertex, then $p$ is either left stark with respect to $t(q)$, or $q$ is right stark with respect to $s(p)$, of length $n+1- (l(p)+l(q))$.
\end{enumerate}

We have the following lemma describing the bound paths in $Q(n)$

\begin{lemma}\label{commpath}
Let $q = \gkk{t_r}{\bfi(t_1,\ldots,t_{r-1})} \gkk{t_{r-1}}{\bfi(t_1, \ldots, t_{r-2})} \cdots \gkk{t_1}{\bfi}$ be a bound path in $Q(n)$.

If $t_r\neq 1$ and $i_{t_r-1} >1$, then there is a $d\in k$ such that
$$q=d \gkk{t_{r-1}}{\bfi(t_1,\ldots,t_{r-2},t_r)} \gkk{t_{r-2}}{\bfi(t_1,\ldots,t_{r-3},t_r)} \cdots \gkk{t_1}{\bfi(t_r)} \gkk{t_r}{\bfi}.
$$

If $j_{t_1}>1$, then there is a $d\in k$ such that
$$q=d\gkk{t_{1}}{\bfi(t_2,\ldots,t_{r-1},t_r)} \gkk{t_{r}}{\bfi(t_2,\ldots,t_{r-1})} \cdots \gkk{t_3}{\bfi(t_2)} \gkk{t_2}{\bfi}.
$$
\end{lemma}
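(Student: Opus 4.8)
The plan is to obtain both formulas by transporting a single arrow along the path through repeated application of the commutative relations in \eqref{therelation}, checking at each elementary step that the vertex newly created by the move is still in $Q(n)_0$. The starting observation is Lemma~\ref{pathlengthl}: because $q$ is bound, the types $t_1,\ldots,t_r$ are pairwise distinct, so the zero relations of \eqref{therelation} never enter and every move is a commutation of two arrows of \emph{different} types. In an elementary square $\mathbf{k}\slrw{t}\bfi'\slrw{s}\bfi''$ of the path (so $\bfi'=\mathbf{k}(t)$ and $\bfi''=\mathbf{k}(t,s)=\mathbf{k}(s,t)$), three corners $\mathbf{k},\mathbf{k}(t),\mathbf{k}(t,s)$ already lie on the path; if the fourth corner $\mathbf{k}(s)$ also lies in $Q(n)_0$, then the relation of \eqref{therelation} for the pair $\{s,t\}$, read in the appropriate direction, rewrites the subpath as a nonzero scalar multiple of $\mathbf{k}\slrw{s}\mathbf{k}(s)\slrw{t}\mathbf{k}(s,t)$. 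Thus the whole problem reduces to showing that the required fourth corners lie in $Q(n)_0$, after which the displayed $d$ is just the product of the structure constants collected along the way.

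For the first formula I would slide the terminal arrow $\gkk{t_r}{\,\cdot\,}$ to the front, commuting it successively past $t_{r-1},t_{r-2},\ldots,t_1$; the fourth corner produced when it passes $t_j$ is $\mathbf{k}(t_r)$ with $\mathbf{k}=\bfi(t_1,\ldots,t_{j-1})$. Since $t_r\neq 1$ and $i_{t_r-1}>1$, the operation $(t_r)$ lowers only the coordinate $t_r-1$ and the single partial sum $\sum_{t=1}^{t_r-1}$, so it can never violate an upper bound of \eqref{vertexofQ}; the only danger is the lower bound on coordinate $t_r-1$. But that coordinate is raised by $(t_r-1)$ and lowered only by $(t_r)$, and $t_r$ is absent from the distinct prefix $t_1,\ldots,t_{j-1}$, so it is non-decreasing along the prefix and $(\mathbf{k})_{t_r-1}\ge i_{t_r-1}\ge 2$. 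Hence $\mathbf{k}(t_r)\in Q(n)_0$ and the $r-1$ moves all go through.

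The second formula is dual: I would slide the initial arrow $\gkk{t_1}{\bfi}$ to the back, commuting it past $t_2,\ldots,t_r$, the new corners being $\bfi(t_2,\ldots,t_k)=(t_1)\,\bfi(t_1,\ldots,t_k)$. This is where I expect the real work, because for $t_1\ge 2$ the reverse operation $(t_1)$ now \emph{raises} the partial sum $\sum_{t=1}^{t_1-1}$ and so threatens an upper bound rather than a lower one. Writing $\mathbf{m}=\bfi(t_1,\ldots,t_k)$, the remedy is the identity $\sum_{t=1}^{t_1-1}m_t=\sum_{t=1}^{t_1}m_t-m_{t_1}$ together with $\sum_{t=1}^{t_1}m_t\le m+t_1-1$ from \eqref{vertexofQ}: as soon as $m_{t_1}\ge 2$ one gets $\sum_{t=1}^{t_1-1}m_t\le m+t_1-3$, so after the increase by one the bound $\le m+t_1-2$ still holds, and the lower bound on coordinate $t_1$ of $(t_1)\mathbf{m}$ holds as well. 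Finally $m_{t_1}\ge 2$ follows from the hypothesis $j_{t_1}>1$ on the terminal vertex $\bfj=\bfi(t_1,\ldots,t_r)$: coordinate $t_1$ is raised only by $(t_1)$, which has already been spent at the first step and cannot recur among the distinct suffix $t_{k+1},\ldots,t_r$, so it is non-increasing there and $m_{t_1}\ge j_{t_1}\ge 2$. The case $t_1=1$ is easier, since $(1)$ lowers every partial sum and only the lower bound on the first coordinate, again guaranteed by $j_1\ge 2$, must be checked. Collecting scalars as before yields $d$, completing both formulas.
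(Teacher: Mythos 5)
Your proposal is correct and follows essentially the same route as the paper's proof: slide the terminal (resp.\ initial) arrow through the path by repeated application of the commutative relations of \eqref{therelation}, the only point to verify being that each newly created fourth corner of an elementary square lies in $Q(n)_0$, and then collect the nonzero structure constants into $d$. If anything, your verification is more complete than the paper's: in the second case the paper only checks a coordinate lower bound (with some typographical slips, writing $t_r$ for $t_1$), whereas your observation that $m_{t_1}\ge 2$ also yields the needed partial-sum estimate $\sum_{t=1}^{t_1-1}m_t\le m+t_1-3$ fills in the upper-bound check that the paper glosses over.
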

\begin{proof}
Set $\bfj\sk{u}=\bfi(t_1,\dots,t_{u-1})= (j_1\sk{u},\ldots,j_{n+1}\sk{u})$ for $u=1, \ldots, r$.
Since $q$ is a bound path, $ t_1  \neq t_r$ are pairwise different.

If $t_r\neq 1$ and $i_{t_r-1} >1$, then for $u=1, \ldots, r-1$, $j_{t_r-1}\sk{u}\ge i_{t_r-1} >1$, and we have that $\bfj\sk{u}(t_r) = \bfj^{u}-\bfe_{t_r-1} +\bfe_{t_r}$ is a vertex in $Q(n)$, too.
Thus there are non-zero $d_1,\ldots, d_{r-1} \in k$ such that $\gkk{t_r}{\bfi(t_1,\ldots,t_s)} \gkk{t_s}{\bfi(t_1,\ldots,t_{s-1})} = d_s \gkk{t_s}{\bfi(t_1,\ldots,t_{s-1},t_r)} \gkk{t_r}{\bfi(t_1,\ldots,t_{s-1})}$ and thus
$$\arr{lll}{q& = & \gkk{t_r}{\bfi(t_1,\ldots,t_{r-1})} \gkk{t_{r-1}}{\bfi(t_1,\ldots,t_{r-2})} \cdots \gkk{t_2}{\bfi(t_1)} \gkk{t_1}{\bfi}\\
& = &d_{r-1}\gkk{t_{r-1}}{\bfi(t_1,\ldots,t_{r-2},t_r)} \gkk{t_r}{\bfi(t_1,\ldots,t_{r-2})} \cdots \gkk{t_2}{\bfi(t_1)} \gkk{t_1}{\bfi}\\
&=& \cdots \\
&=&d_{r-1}\cdots d_1 \gkk{t_{r-1}}{\bfi(t_1,\ldots,t_{r-2},t_r)} \gkk{t_{r-2}}{\bfi(t_1,\ldots,t_{r-3},t_r)} \cdots \gkk{t_1}{\bfi(t_r)} \gkk{t_r}{\bfi} \\
&=&d \gkk{t_{r-1}}{\bfi(t_1,\ldots,t_{r-2},t_r)} \gkk{t_{r-2}}{\bfi(t_1,\ldots,t_{r-3},t_r)} \cdots \gkk{t_1}{\bfi(t_r)} \gkk{t_r}{\bfi}.
}$$
Similarly, if  ${t_1}>1$, then we have $j_{t_1-1}\sk{u}>1$ for $u= 2, \ldots, n$ so $(t_1)\bfj\sk{u}=\bfj\sk{u} + \bfe_{t_r-1} -\bfe_{t_r}\in Q(n)_0$.
This implies $(t_1)\bfj\sk{u}\in Q(n+1)_0$ and there are non-zero $d'_2, \ldots, d'_r \in k$ such that $\gkk{t_s}{\bfi(t_1,\ldots,t_{s-1})} \gkk{t_1}{\bfi(t_2,\ldots,t_{s-1})} = d'_s \gkk{t_1}{\bfi(t_2,\ldots,t_{s})} \gkk{t_s}{\bfi(t_2,\ldots,t_{s-1})}$ and thus
$$\arr{lll}{q& = & \gkk{t_r}{\bfi(t_1,\ldots,t_{r-1})} \gkk{t_{r-1}}{\bfi(t_1,\ldots,t_{r-2})} \cdots \gkk{t_2}{\bfi(t_1)} \gkk{t_1}{\bfi}\\
& = &d'_2 \gkk{t_r}{\bfi(t_1,\ldots,t_{r-1})} \gkk{t_{r-1}}{\bfi(t_2,\ldots,t_{r-2})} \cdots \gkk{t_1}{\bfi(t_2)} \gkk{t_2}{\bfi}\\
&=& \cdots \\
&=&d'_r d'_{r-1}\cdots d'_2 \gkk{t_{1}}{\bfi(t_2,\ldots,t_{r})} \gkk{t_{r}}{\bfi(t_2,\ldots,t_{r-1})} \cdots \gkk{t_3}{\bfi(t_2)} \gkk{t_2}{\bfi}
\\
&=&d' \gkk{t_{1}}{\bfi(t_2,\ldots,t_{r})} \gkk{t_{r}}{\bfi(t_2,\ldots,t_{r-1})} \cdots \gkk{t_3}{\bfi(t_2)} \gkk{t_2}{\bfi}.
}$$
\end{proof}

\begin{theorem}\label{main}
Pyramid $n$-cubic algebra $\LL(n)$ is an extendible $(n-1)$-translation algebra with admissible $(n-1)$-translation quiver $Q(n)$.
\end{theorem}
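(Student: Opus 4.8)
The plan is to argue by induction on $n$, using the cuboid-truncation machinery of Section~4 as the inductive step. For the base case $n=1$, the algebra $\LL(1)$ is the quadratic dual of the linearly oriented hereditary algebra of type $A_m$: its quiver $Q(1)$ is the chain $1\to 2\to\cdots\to m$ and every length-two path lies in $\rho^{\bfg}(1)$, so $\LL(1)_t=0$ for $t>1$ and $\LL(1)$ is trivially $(1,q)$-Koszul, hence a $0$-translation algebra with $0$-translation $\tau\colon\bfi\mapsto\bfi-\bfe_1$. The admissibility conditions (i)--(iii) and extendibility reduce here to elementary assertions about a single chain of arrows and are immediate.

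For the inductive step, suppose $\LL(n)$ is an extendible $(n-1)$-translation algebra with admissible $(n-1)$-translation quiver $Q(n)$. I would feed $\LL(n)$ through the construction already assembled in Sections~2--4: the twisted trivial extension $\tL^{\bbfg}(n)$ is an $n$-translation algebra by Theorem~\ref{trata} (cf.\ Theorem~\ref{twistext}); its smash product $\olL^{\bbfg}(n)=\tL^{\bbfg}(n)\#\mathbb Z^*$ is an $n$-translation algebra with stable $n$-translation quiver $\olQ(n)$ and $n$-translation $\otau_n\colon\bfi\mapsto\bfi-\bfe_{n+1}$ by Theorem~\ref{traba} (whose hypotheses at level $n$ are precisely the inductive assumption on $\LL(n)$, so no circularity arises); and the $(n+1)$-cuboid truncation then produces $\LL(n+1)$. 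The decisive identification is Lemma~\ref{boundsubq} in the case $t=n+1$, together with Corollary~\ref{cboundsubq}: the directly defined bound quiver $(Q(n+1),\rho(n+1))$ of \eqref{thequiver}, \eqref{elementg} and \eqref{therelation} coincides with $(Q(n+1)(1),\rho(n+1)(1))$, the $(n+1)$-cuboid completion of the complete $\otau$-slice $Q(n)\times\{1\}$. Hence $\LL(n+1)\cong\LL(n+1)(1)$, and Theorem~\ref{cuboidtruncation}(c) immediately yields that $\LL(n+1)$ is an $n$-translation algebra, in fact almost Koszul of type $(n+1,m-1)$.

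It then remains to check that $Q(n+1)$ is an \emph{admissible} $n$-translation quiver and that $\LL(n+1)$ is extendible, and here Lemma~\ref{commpath} does the work. By Lemma~\ref{pathlengthl} every bound path in $Q(n+1)$ is a product of arrows of pairwise distinct types, so each maximal bound path uses each of the $n+1$ types exactly once; condition~(i) follows by prepending and appending the missing-type arrows along the edges of the unit $(n+1)$-cube at the relevant vertex, the two shifting identities of Lemma~\ref{commpath} ensuring the extended product stays bound. For condition~(ii), a bound path from a non-injective to a non-projective vertex is rewritten---pulling every arrow of type $t$ with $i_{t-1}>1$ to the front by the first identity of Lemma~\ref{commpath}, and pushing every arrow of type $t$ with $j_t>1$ to the back by the second---into a shiftable product $p'p''$ in which $p''$ meets no injective vertex and $p'$ meets no projective vertex. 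Extendibility is then inherited, since the projective and injective vertex sets $\mathcal P=\{\bfi\mid i_{n+1}=1\}$ and $\mathcal I=\{\bfi\mid|\bfi|=m+n\}$ of $Q(n+1)$ again permit the trivial-extension-and-smash step to be reapplied at the next stage.

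I expect the main obstacle to be condition~(iii). Unlike (i) and (ii), the starkness dichotomy is not settled by a single rewriting: for a non-projective vertex $\bfi$, a bound path $p$ ending at $\bfi$ through a projective vertex, and a bound path $q$ starting at $\tau\bfi$ through an injective vertex with $l(p)+l(q)\le n$, one must decide whether $p$ is left stark with respect to $t(q)$ or $q$ is right stark with respect to $s(p)$, of the complementary length $n+1-(l(p)+l(q))$. This requires combining the lengths $l(p),l(q)$, the positions of the projective and injective vertices along $p$ and $q$, and the active boundary constraints $i_s=1$ and $\sum_{t=1}^{s}i_t=m+s-1$ into a uniform case analysis, applying both commutation identities of Lemma~\ref{commpath} in tandem; making this bookkeeping work uniformly in $n$, rather than verifying it ad hoc for small cases, is the technical heart of the argument.
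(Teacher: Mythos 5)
Your overall architecture coincides with the paper's: induction on $n$, base case $\LL(1)$ as a radical-square-zero Koszul algebra, inductive step run through the twisted trivial extension, the smash product, and the $(n+1)$-cuboid truncation (Theorem~\ref{cuboidtruncation}), with Lemma~\ref{commpath} driving the admissibility checks; your resolution of the apparent circularity is also exactly how the paper is organized. The genuine gap is that you never prove condition (iii): you only describe the case analysis you expect it to require and defer it as ``the technical heart of the argument''. You have the difficulty inverted. In $Q(n+1)$ all bound paths between a fixed pair of vertices are linearly dependent --- this is the multiplicity-one property of Lemma~\ref{pathlengthl}, $\dmn_k e_{\bfj}\tL^{\bbfg}(n)_l e_{\bfi}\le 1$, which passes to the truncation $\LL(n+1)$ --- and with this in hand the starkness dichotomy of (iii) is immediate; the paper dispatches it in one sentence. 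The part that actually needs work is condition (ii): the paper proves it by induction on the length of the path, and in the extremal case $t_1=1$, $t_{l+1}=n+1$ it must first \emph{show} that some $i_{t_r-1}>1$ or some $j_{t_r}>1$ holds (otherwise one derives $i_{n+1}=m-1\ge 2$, contradicting projectivity of $\bfi$) before Lemma~\ref{commpath} can be invoked. Your one-line rewriting claim for (ii) silently assumes the hypotheses $i_{t-1}>1$, resp.\ $j_t>1$, of that lemma are available; establishing them is the actual content.

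Separately, your argument for condition (i) --- prepend and append the missing-type arrows along the edges of the unit $(n+1)$-cube --- breaks down at the boundary of the pyramid, because the vertices those arrows would require need not lie in $Q(n+1)_0$. The delicate case is $\bfi$ injective and $\bfj$ projective: then no arrow of type $1$ or of type $n+1$ can occur in $p$ at all ($|\cdot|$ is already maximal along $p$ and the last coordinate already minimal), so naive completion by missing types is not available. Instead, the whole path lies in the slice $Q(n)\times\{1\}$, one invokes the \emph{inductive hypothesis} (admissibility of $Q(n)$) to extend $p$ to a bound path $q'pp''$ of length $n$ inside that slice, and only then does the presence of a type-$n$ arrow in $q'pp''$ force the target vertex to have $n$-th coordinate greater than $1$, so that a single type-$(n+1)$ arrow can be attached to reach length $n+1$. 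This is the one point in the admissibility verification where the inductive assumption on $Q(n)$ is genuinely used, and it is missing from your sketch.
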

\begin{proof}
We prove the theorem by using induction on $n$.
We have
\eqqc{quiver0}{Q(1)=Q:\stackrel{1}{\circ}\longrightarrow \stackrel{2}{\circ} \longrightarrow \stackrel{}{\circ}\cdots \stackrel{}{\circ}\longrightarrow \stackrel{m}{\circ}, }
with the relation $\rho(1) = \{\alpha_{i+1} \alpha_i \mid  i = 1, \ldots, m-2\}$.
Clearly, $Q(1) = (Q(1),\rho(1))$ is an admissible $0$-translation quiver with $0$-translation $\tau_0:i+1\to i$ for $i=1,\ldots, m-1$.
$\Lambda(1)$ is a Koszul algebra with radical squared zero.
So it is a $0$-translation algebra, and it is extendible by Theorem 2.1 of \cite{bbk}.

Assume that $\LL(n)$ is an extendible $(n-1)$-translation algebra and its bound quiver $Q(n)$ is an admissible $(n-1)$-translation quiver.

It follows from Theorem ~\ref{cuboidtruncation} that $\LL(n+1)$ is an  $n$-translation algebra with $n$-translation $Q(n+1)$ and $n$-translation $\tau_n: \bfi \to \bfi - \bfe_{n+1}$.
We prove that $Q(n+1)$ is admissible.

Let $\cP$ and $\cI$ be the sets of projective vertices and injective vertices of $Q(n+1)$, respectively.
Then $$\mathcal P=\{\bfi\in Q(n+1)_0\mid  i_{n+1}=1\}\qquad \mathcal I=\{\bfi \in Q(n+1)_0 \mid \,  |\bfi| = m+n\}.$$
Let $p =\gkk{t_s}{\bfi(t_1,\ldots,t_{s-1})}\cdots \gkk{t_1}{\bfi}$ be a bound path in $Q(n+1)$.
If $t_h =1$, then $$\bfi, \bfi(t_1), \ldots, \bfi(t_1, \ldots, t_{h-1} ) $$ are not injective.
If $t_h =n+1$, then $$\bfi(t_1,\ldots,t_{h}), \bfi(t_1\ldots,t_{h}, t_{h+1}), \ldots, \bfi(t_1,\ldots,t_{s})$$ are not projective.

Assume that $p$ is a bound path from $\bfi$ to $\bfj$ in $Q(n+1)$.
Then $\bfj \in \{\bvi{\bfa}\mid \bfa \in U^{n+1}\}$ and $\bfi \in \{\bfv^{\bfj-\bfe_{n+1}}(\bfa)\mid \bfa \in U^{n+1}\}$.
If $\bfj$ is non-projective, then there is a path $p'$ from  $\tau_{n} \bfj$ to $\bfi$ such that $pp'$ is a bound path of length $n+1$, take $p''=e_{\bfj}$ be the trivial path.
If $\bfi$ is non-injective then there is a path $p''$ from $\bfj$ to $\tau_{n}^{-1} \bfi$  such that $p''p$ is a bound path of length $n+1$, take $p'=e_{\bfi}$ be the trivial path.
If $\bfi$ is injective and $\bfj$ is projective, then we have that $\sum\limits_{t=1}^{n+1} i_t = m+n$ and $j_{n+1}=1$.
So $i_{n+1}=1$ and $|\bfj|= m+n$, and all the vertices on the path $p$ are projective and injective, which is on the subquiver $Q(n)\times\{1\}$ of $Q(n+1)$.
By the inductive assumption, there are paths $q'$ and $p''$ in $Q(n)\times\{1\}$, such $q'pp''$ is a bound path from $\bfi'=(i_1',\ldots,i_{n+1}')$ to $\bfj'=(j_1',\ldots,j_{n+1}')$ of length $n$.
But $i'_t\ge 1$ and $j'_t\ge 1$ for $t=1,\ldots,n+1$, $q'pq''$ is formed by arrows of different type, thus $j'_{n}>1$ and $\bfj'  , \bfj'(n+1)$ are vertices in $Q(n+1)$.
So $\gkk{n+1}{\bfj'}:\bfj'  \to \bfj'(n+1)$ is  an arrow of $Q(n+1)$.
Let $p'=  \gkk{n+1}{\bfj'}q'$, then $p'pp''$ is a bound path of length $n+1$ in $Q(n+1)$.
This proves the Condition (i) of the admissibility.

Let $p =\gkk{t_s}{\bfi(t_1,\ldots,t_{s-1})}\cdots \gkk{t_1}{\bfi}$ be a bound path from non-injective vertex $\bfi$ to  non-projective vertex $\bfj$.
Then we have that $j_{n+1} > 1$ and $|\bfi| < m+n$.
We prove that $p$ is shiftable by using induction on $s$.

If $s=1$, then either $t_1=1$, then $i_{n+1}=j_{n+1}$, both $\bfi$ and $\bfj$ are non-projective and we take  $p'=p$, $p''=e_{\bfi}$, so $p=p'p''$ is shiftable, or $t_1\neq 1$ and $|\bfj|=|\bfi|<m+n$, thus both $\bfi$ and $\bfj$ are non-injective and we take  $p'=e_{\bfj}$, $p''=p$, so $p=p'p''$ is shiftable $p$.

Assume that $l \ge 1$ is an integer and any bound path of length $l$ from a non-injective vertex to a non-projective vertex is shiftable for $s=l$.

Now let $s=l+1$.
Assume that $p$ passes through both a projective and an injective vertex.
Since for any arrow $\alpha: \bfi' \to \bfj'$,  $\bfj'$ is injective if $\bfi'$ is so, and $\bfi'$ is projective if $\bfj'$ is so, thus $\bfi$ is projective and $\bfj$ is injective and we have $i_{n+1} = 1$ and $|\bfj| = m+n$.
This implies that there is one arrow of type $1$ one arrow of type $n+1$ in $p$ in the path, and we have $j_{n+1} =2$ and $|\bfi| =m+n-1 $.

If $t_1\neq 1$, $p$ does not start with an arrow of type $1$, then $\bfi(t_1)$ is not injective, then $p' =\gkk{t_s}{\bfi(t_1, \ldots, t_{s-1})} \cdots \gkk{t_2}{\bfi(t_1)} $ is a bound path from a non-injective vertex $\bfi(t_1)$ to a non-projective vertex $\bfj$ and thus it is shiftable by inductive assumption.
So $p$ is also shiftable by definition.

If $t_{l}\neq n+1$, that is, $p$ does not end with an arrow of type $n+1$, then $(t_l)\bfj$ is not projective, then $p' =\gkk{t_{l}}{\bfi(t_1,\ldots,t_{l-1})}\cdots \gkk{t_1}{\bfi} $ is a bound path from a non-injective vertex $\bfi$ to a non-projective vertex $(t_{l})\bfj$ and thus it is shiftable by inductive assumption.
So $p$ is also shiftable by definition.

Now assume that $t_1= 1$, $t_{l+1} = n+1$.

If $i_{t_{r}-1}=1$ for $1<r\le l+1$ and $j_{t_r}=1$ for $1\le r \le l$.
Then for $r=1,\ldots, l+1$, $\bfi(t_1,\ldots,t_r) \in Q(n+1)$ if and only if $t_v=v$ for $v=1,\ldots,r$ and similarly, for $r=1, \ldots,l$, $(t_r,\ldots,t_{l+1})\bfj \in Q(n+1)$ if and only if $t_{l+1-v}=n+1-v$ for $v=1, \ldots,l+1-r$.
This implies $l=n$ and $j_{n+1} = i_{n+1}+1= m$, since $\bfj$ is injective and $|\bfj|=m+n$.
So $ i_{n+1}=m-1\ge 2$, this contradicts the fact that $\bfi$ is projective.

So there is $1< r \le l+1$ such that $i_{t_r-1}>1$, or there is $1\le r \le l$, such that $j_{t_r}>1$.
In the first case we have that $$p = d \gkk{t_{l+1}}{\bfi(t_1,\ldots,t_{l})}\cdots \gkk{t_{r+1}}{\bfi(t_1,\ldots,t_{r})} \gkk{t_{r-1}}{\bfi(t_1,\ldots,t_{r-2},t_r)} \cdots \gkk{t_1}{\bfi(t_r)}\gkk{t_r}{\bfi}$$ by Lemma ~\ref{commpath}, and $p$ is a multiple of a path which does not start with an arrow of type $1$.
Similarly, in the second we have that $$p = d \gkk{t_r}{\bfi(t_1,\ldots, t_{r-1}, t_{r+1}, \ldots, t_{l+1})} \gkk{t_{l+1}}{\bfi(t_1, \ldots, t_{r-1}, t_{r+1}, \ldots, t_{l})} \cdots \gkk{t_{r+1}}{\bfi(t_1,\ldots,t_{r-1})} \gkk{t_{r-1}}{\bfi(t_1, \ldots, t_{r-2})} \cdots \gkk{t_1}{\bfi(t_r)}.$$
Thus $p$ is linearly dependent to shiftable paths by above argument and (ii) of the admissible condition hold.

It follows from Lemma~\ref{pathlengthl} that for any pair $\bfi, \bfj$ of vertices of $Q(n+1)$, all the bound paths from $i$ to $j$ are linearly dependent if there is one, it is easy to see that (iii) of the admissible condition holds.

This proves that a pyramid $n$-cubic algebra is $(n-1)$-translation algebra with admissible $(n-1)$-translation quiver, by induction.
By Proposition 4.2 of \cite{gu14}, there is $\bbfg $ such that  $\tL^{\bbfg}(n+1)$ is the trivial extension of $\LL(n+1)$ and by Theorem ~\ref{ncAkszul},  $\LL(n+1)$ is extendible.
\end{proof}

\section{$n$-almost Split Sequences and Absolutely $n$-complete Algebras}
We omit $\bfg, \bfg' $ and $\bbfg$ whenever it is possible in this section.
Let $\GG(n+1)={\LL(n+1)^!}^{op}$ be the Koszul dual of $\LL(n+1)$.
They have the same quiver with quadratic dual relations.
The Koszul complexes gives a correspondence between the radical layers of the projective cover of a simple and terms in the projective resolution of the corresponding simple of these two algebras \cite{bbk}.
The radical layers of the projective cover and the terms of projective resolution of a simple $\LL(n)$-module are described by the $n$-cubic cells $H^{\bfi}$ and $n$-hammocks $H^{\bfi}_C$ (see Lemma ~\ref{projrefine} and \eqref{bhprtmlnpo}).

Now we consider the $n$-almost split sequence in the category of the finite generated projective modules.
We first study the $n$-cubic cells and $n$-hammocks, we have the following lemma.

\begin{lemma}\label{cuboidcube}
Let $Q(n)$ be a pyramid shaped $n$-cubic quiver and let $\bfi$ be a vertex of $Q(n)$.

If $H^{\bfi}$ is complete, then $H_C^{\bbvi{\bfe}}$ is not complete.

If $H_C^{\bfi}$ is complete, then $H^{\bbvi{\bfb(\bfi)}}$ is not complete.
\end{lemma}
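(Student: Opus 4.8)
The plan is to reduce both assertions to explicit formulas for the two end vertices together with clean completeness criteria for cells and hammocks. First I would record when a cell is complete. Since $\bfe\sk{n}=(1,\ldots,1)$ is a vertex of $U\sk{n}$, the cell $H^{\bfi}$ is complete precisely when its end vertex $\bbvi{\bfe}$ lies in $Q(n)_0$. A direct substitution in the definition of $\bbvi{\cdot}$ gives $\bbvi{\bfe}=(i_1,\ldots,i_{n-1},i_n+1)=\bfi+\bfe_n$. Comparing with the defining inequalities of $Q(n)_0$, every partial sum $\sum_{t=1}^{s}i_t$ with $s<n$ is unchanged and only the top one increases by $1$, so $H^{\bfi}$ is complete if and only if $|\bfi|\le m+n-2$, i.e. $\bfi$ is non-injective.

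Next I would do the analogous computation for the hammock. Here $\bfb(\bfi)$ is the far corner of $C\sk{n-1}(\bfi)$, so $H_C^{\bfi}$ is complete exactly when $\bbvi{\bfb(\bfi)}\in Q(n)_0$. Substituting $b_1=m+n-2-\sum_{t=1}^{n-1}i_t$ and $b_t=i_{t-1}-1$ for $2\le t\le n$ into $\bbvi{\cdot}$ yields
$$\bbvi{\bfb(\bfi)}=\Bigl(m+n-1-\sum_{t=1}^{n-1}i_t,\ i_1,\ldots,i_{n-2},\ i_n+i_{n-1}-1\Bigr),$$
and a telescoping sum gives the key identity $|\bbvi{\bfb(\bfi)}|=|\bfi|+b_1=m+n-2+i_n$. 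I would then test membership in $Q(n)_0$ inequality by inequality: positivity of the first coordinate is exactly the $s=n-1$ inequality for $\bfi$, the remaining coordinates $i_{t-1}$ and $i_n+i_{n-1}-1$ are plainly $\ge 1$, each partial sum $\sum_{t=1}^{s}$ with $s\le n-1$ collapses to $\sum_{t=s}^{n-1}i_t\ge n-s$ which holds because every $i_t\ge 1$, and the $s=n$ inequality reads $m+n-2+i_n\le m+n-1$, i.e. $i_n\le 1$. Hence $H_C^{\bfi}$ is complete if and only if $i_n=1$, i.e. $\bfi$ is projective.

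With these two criteria in hand the lemma is immediate. For the first assertion, if $H^{\bfi}$ is complete then $\bbvi{\bfe}\in Q(n)_0$ and its $n$th coordinate is $i_n+1\ge 2$; thus $\bbvi{\bfe}$ is not projective, and by the hammock criterion $H_C^{\bbvi{\bfe}}$ is not complete. (Note the hypothesis is used only to guarantee $\bbvi{\bfe}\in Q(n)_0$ so that $H_C^{\bbvi{\bfe}}$ is defined; the end vertex of a cell is never projective.) For the second assertion, if $H_C^{\bfi}$ is complete then $i_n=1$, so the identity above gives $|\bbvi{\bfb(\bfi)}|=m+n-1$; thus $\bbvi{\bfb(\bfi)}$ is injective, and by the cell criterion $H^{\bbvi{\bfb(\bfi)}}$ is not complete.

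The only genuinely computational step, and the place where an error would most likely hide, is the verification that $\bbvi{\bfb(\bfi)}\in Q(n)_0$ for the hammock: one must carry out the telescoping of the partial sums correctly and confirm that every inequality except the top one is forced by $i_t\ge 1$ together with the membership $\bfi\in Q(n)_0$, so that $i_n=1$ is the single binding condition. Everything else follows formally from the two completeness criteria once the end-vertex formulas $\bbvi{\bfe}=\bfi+\bfe_n$ and the identity $|\bbvi{\bfb(\bfi)}|=m+n-2+i_n$ are established.
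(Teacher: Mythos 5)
Your proof is correct; every formula checks out against the definitions: $\bbvi{\bfe}=\bfi+\bfe_n$, your expression for $\bbvi{\bfb(\bfi)}$, the telescoping identity $|\bbvi{\bfa}|=|\bfi|+a_1$ giving $|\bbvi{\bfb(\bfi)}|=m+n-2+i_n$, and the verification that all defining inequalities of $Q(n)_0$ except the top partial sum are automatic. The paper proceeds by the same kind of coordinate computation but organizes it differently: for the first assertion it computes in one shot the composite end vertex $\bar{\bfv}^{\bbvi{\bfe}}(\bfb(\bbvi{\bfe}))$ of the hammock at $\bbvi{\bfe}$ and observes that its coordinate sum exceeds $m+n-1$, hence it is not in $Q(n)_0$; the second assertion is left as ``proven similarly.'' You instead factor the lemma through two reusable iff-criteria --- $H^{\bfi}$ is complete iff $\bfi$ is non-injective ($|\bfi|\le m+n-2$), and $H_C^{\bfi}$ is complete iff $\bfi$ is projective ($i_n=1$) --- after which both assertions are immediate: a cell end vertex has last coordinate $i_n+1\ge 2$, hence is never projective, and a complete hammock's end vertex has coordinate sum $m+n-1$, hence is injective. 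Your factorization makes the duality between the two assertions transparent (it is the combinatorial counterpart of Lemmas \ref{prohammockL} and \ref{prohammockG}) and treats the second assertion explicitly rather than by analogy, at the modest cost of checking all inequalities instead of only the top one. It also quietly corrects the paper's write-up: the displayed formulas there for $\bfb(\bbvi{\bfe})$ and for the resulting end vertex contain indexing slips, and the stated sum $m+n$ is what one obtains only when $i_n=1$ (in general it is $m+n-1+i_n$); the paper's conclusion survives because only the inequality $>m+n-1$ is needed, but your computation is the clean one.
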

\begin{proof}
For $\bfi= (i_1,\ldots, i_n) \in G(n)_0$, then $\bbvi{\bfe} =(i_1, \ldots,i_{n-1}, i_n+1 )$.
Thus $\bfb(\bbvi{\bfe}) = (m+n-1-\sum_{t=1}^n i_t,i_2-1,\ldots,i_{n-1}-1,i_n)$
and $$\bar{\bfv}^{\bbvi{\bfe}}(\bfb(\bbvi{\bfe})) =(i_1+m+n-1-\sum_{t=1}^n i_t-i_2+1, 2i_2-i_3,\ldots,2i_{n-1}-i_{n}-1,2i_n+1 ),$$
and we have $$|\bar{\bfv}^{\bbvi{\bfe}}(\bfb({\bbvi{\bfe}}))| = m+n-1+1= m+n.$$
This implies that $\bar{\bfv}^{\bbvi{\bfe}}(\bfb({\bbvi{\bfe}})) \not \in Q(n)_0$ and $H_C^{\bbvi{\bfe}}$ is not complete.

The second statement is proven similarly.
\end{proof}

We  have the following lemma.

\begin{lemma}\label{prohammockL}
The following are equivalent for $\LL(n+1)$.
\begin{itemize}
\item[(i)] $\LL(n+1)e_{\bfi}$ is projective injective.
\item[(ii)] The Loewy length of $\LL(n+1)e_{\bfi}$ is $n+2$.
\item[(iii)] $H^{\bfi}$ is complete.
\end{itemize}
\end{lemma}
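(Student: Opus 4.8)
The plan is to prove the implications (ii)$\Leftrightarrow$(iii) combinatorially and then (i)$\Leftrightarrow$(ii) homologically, using that $\LL(n+1)$ is an $n$-translation algebra with admissible $n$-translation quiver (Theorem~\ref{main}) and that its indecomposable projective $P(\bfi)=\LL(n+1)e_{\bfi}$ has its radical layers described by the $(n+1)$-cubic cell $H^{\bfi}$.

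First I would settle (ii)$\Leftrightarrow$(iii). By the description of the radical layers of $P(\bfi)$ through $H^{\bfi}$ (the $(n+1)$-dimensional analogue of Lemma~\ref{projrefine}),
$$\bfr^t P(\bfi)/\bfr^{t+1}P(\bfi)\simeq\bigoplus_{\bfa\in U_t\sk{n+1}(\bfi)}S(\bbvi{\bfa}),$$
so the Loewy length of $P(\bfi)$ equals $1+\max\{t:U_t\sk{n+1}(\bfi)\neq\emptyset\}$. Since $U_{n+1}\sk{n+1}=\{\bfe\}$ is a single point and $\bbvi{\bfe}$ is the end vertex of $H^{\bfi}$, the bottom layer $t=n+1$ is nonzero exactly when $\bbvi{\bfe}\in Q(n+1)_0$, that is, exactly when $H^{\bfi}$ is complete. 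Hence the Loewy length equals $n+2$ iff $H^{\bfi}$ is complete.

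Next, (iii)$\Rightarrow$(i). Assume $H^{\bfi}$ complete, so $\bbvi{\bfe}\in Q(n+1)_0$; since the $n$-translation is $\tau\bfi=\bfi-\bfe_{n+1}$ we have $\bbvi{\bfe}=\tau^{-1}\bfi$, and in particular $\bfi$ is non-injective. By condition~1 of the $n$-translation quiver every maximal bound path starting at $\bfi$ has length $n+1$ and terminates at $\tau^{-1}\bfi=\bbvi{\bfe}$, and by condition~2 any two such paths are linearly dependent; therefore $\soc P(\bfi)=S(\bbvi{\bfe})$ is simple and $P(\bfi)\hookrightarrow I(\bbvi{\bfe})$. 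To promote this embedding to an isomorphism I would compare dimensions: by Lemma~\ref{pathlengthl} each of $\dmn_k P(\bfi)$ and $\dmn_k I(\bbvi{\bfe})$ counts bound paths (out of $\bfi$, resp.\ into $\bbvi{\bfe}$), each determined by its set of arrow types, and the complementation $\bfa\mapsto\bfe-\bfa$ on $U\sk{n+1}$ together with the admissibility conditions~3--4 gives a length-reversing bijection between the two sets. Thus $\dmn_k P(\bfi)=\dmn_k I(\bbvi{\bfe})$ and $P(\bfi)\cong I(\bbvi{\bfe})$ is injective. Alternatively, one may identify $P(\bfi)$ with $\olP(\bfi)$ inside the self-injective cover $\olL(n)=\tL^{\bbfg}(n)\#\mathbb Z^{*}$: completeness means the socle $\olS(\bbvi{\bfe})=\olS(\otau^{-1}\bfi)$ of the projective-injective $\olP(\bfi)$ is not cut off by the truncation $\olL(n)\to\LL(n+1)$ of Theorem~\ref{cuboidtruncation}, so that $P(\bfi)=\olP(\bfi)$ stays injective.

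Finally, (i)$\Rightarrow$(iii), by contraposition. Suppose $H^{\bfi}$ is not complete; by the first step $P(\bfi)$ has Loewy length $L\le n+1$. If $P(\bfi)$ were injective then, being indecomposable with simple top $S(\bfi)$, it would equal $I(\bfj)$ for $\bfj=\soc P(\bfi)$, and there would be a bound path $p$ from $\bfi$ to $\bfj$ of length $L-1\le n$ that is the longest bound path ending at $\bfj$. By admissibility condition~(i) there are paths $q',q''$ with $q'pq''$ a bound path of length $n+1$; maximality of $p$ at its target forces $q'$ to be trivial, so $q''$ is nontrivial and $pq''$ is a bound path ending at $\bfj$ of length $n+1>L-1$, contradicting the maximality of $p$. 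Hence $P(\bfi)$ is not injective, completing the cycle. I expect the main obstacle to be the isomorphism $P(\bfi)\cong I(\bbvi{\bfe})$ in the step (iii)$\Rightarrow$(i): upgrading the socle embedding to an equality requires the length-reversing bijection between outgoing and incoming bound paths, for which the cube-complementation $\bfa\mapsto\bfe-\bfa$ and the admissibility of $Q(n+1)$ (Theorem~\ref{main}) must be combined carefully; the truncation argument sidesteps this but then needs the routine verification that the whole support of $\olP(\bfi)$ already lies in $Q(n+1)_0$ once its socle does.
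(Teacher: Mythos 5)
The paper offers no proof of this lemma at all: it is stated with the remark that it ``follows directly,'' the intended justification being the preceding observation that radical layers of the indecomposable projectives are described by the $(n+1)$-cubic cells $H^{\bfi}$ (the $\LL(n+1)$-analogue of Lemma~\ref{projrefine}, via the truncation $\LL(n+1)\simeq\olL(n)/I$ of Theorem~\ref{cuboidtruncation}) together with the graded self-injectivity of the covering algebra $\olL(n)$. Your proposal is therefore necessarily a different route, and in substance it is correct. Your (ii)$\Leftrightarrow$(iii) is exactly the paper's implicit reasoning. For (iii)$\Rightarrow$(i), your main argument --- simple socle $S(\bbvi{\bfe})=S(\tau^{-1}\bfi)$ from conditions~1--2 of the $n$-translation quiver, then $\dmn_k P(\bfi)=\dmn_k I(\bbvi{\bfe})$ via the complementation $\bfa\mapsto\bfe-\bfa$ and conditions~3--4 --- does work: condition~4 applied to a bound path $\bfi\to\bbvi{\bfa}$ produces a bound path $\bbvi{\bfa}\to\bbvi{\bfe}$ of complementary length, and condition~3 gives the converse, so the two counts agree. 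Your alternative truncation argument is closer to what the paper intends, and the ``routine verification'' you defer is genuinely one line: if $\bbvi{\bfe}\in Q(n+1)_0$ then $|\bbvi{\bfa}|=|\bfi|+a_1\le|\bfi|+1\le m+n$ for every $\bfa\in U\sk{n+1}$, so the whole support of $\olP(\bfi)$ lies in $Q(n+1)_0$, $I\olP(\bfi)=0$, and $P(\bfi)=\olP(\bfi)$ is injective over the quotient algebra.

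There is one local misstep, in (i)$\Rightarrow$(iii). You assert that ``maximality of $p$ at its target forces $q'$ to be trivial.'' That is not what forces it: in the paper's right-to-left composition $q'pq''$, the path $q'$ is appended at the \emph{target} $\bfj$ of $p$, so a nontrivial $q'$ yields a longer bound path \emph{starting at} $\bfi$, which contradicts the Loewy-length bound $L\le n+1$ at the source, not the maximality of $p$ among paths ending at $\bfj$. The repair is immediate and uses both maximalities you already have: since $l(q')+l(q'')=n+2-L\ge 1$, at least one of $q',q''$ is nontrivial; a nontrivial $q'$ gives a bound path of length $>L-1$ starting at $\bfi$, contradicting the Loewy length of $P(\bfi)$, while a nontrivial $q''$ gives a bound path of length $>L-1$ ending at $\bfj$, contradicting the Loewy length of $I(\bfj)=P(\bfi)$. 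Either case completes the contradiction, so your overall scheme stands.
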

\begin{proof}
$(i) \Leftrightarrow (ii)$. Since $\LL(n+1)$ is $(n+1)$-translation algebra, an projective module is  projective injective module if and only if its Lowey length is $n+2$.

$(ii) \Leftrightarrow (iii)$. The Loewy length of $\LL(n+1)e_{\bfi}$ is $n+2$ if  and only if there is a bound path of length $n+1$ starting from $\bfi$ in  $(Q(n+1),\rho(n+1))$.
The only bound path of length $n+1$ in starting from $\bfi$ in  $(Q(n+1) ,\rho(n+1))$ must end at $\bbvi{\bfe}$.
So the Loewy length of $\LL(n+1)e_{\bfi}$ is $n+2$  if  and only if $\bbvi{\bfe}$ in  $(Q(n+1), \rho(n+1))$, that is, if  and only if  $H^{\bfi}$ is complete.
\end{proof}

It follows from Theorem 6.2 of \cite{gu14}, that ${\tL(n)^!}^{op}$ and ${\olL(n)^!}^{op}$ are self-injective of Loewy length $m$, so they are $(m-2)$-translation algebras.
Via the Koszul complexes, $(n+1)$-hammocks can be used to describe the radical layers of the projective covers and $(n+1)$-cubic cells can be used to describe the projective resolutions of the simple ${\olL(n)^!}^{op}$-modules.
Note that  $\GG(n+1)={\LL(n+1)^!}^{op}$ have the same quiver $Q(n+1)$ as $\LL(n+1)$, the arrows are the opposites of the dual basis of the original arrows of $k Q(n+1)_1$, which we denote by the same notations.
The relations can be chosen as a basis of the orthogonal space of the subspace spanned by $\rho^{\bbfg}(n+1)$, say  \eqqc{perprelation}{ \arr{lll}{\rho^{\bbfg, \perp}(n+1) &=& \{ d_{s,t,\bfi}^{-1} \gkk{t}{\bfi({s})} \gkk{s}{\bfi} + \gkk{s}{\bfi({t})} \gkk{t}{\bfi} \mid  \bfi,\bfi(t), \bfi({s}), \bfi(t)({s})\in Q(n+1)_0, \\ && \qquad \qquad  1\le t < s \le n+1\} \\ &&  \quad \cup \{ \gkk{s+1}{\bfi{s}}\gkk{s}{\bfi}\mid s=1 \mbox{ or } i_{s-1}>0, i_{s}=1\}.}}
As a truncation of ${\olG(n)^!}^{op}$,  we also have the following version of Lemma ~\ref{prohammockL} for $\GG(n+1)$.
\begin{lemma}\label{prohammockG}
$\GG(n+1)$ is an $(m-2)$-translation algebra, and the following are equivalent for $\GG(n+1)$.
\begin{itemize}
\item[(i)] $\GG(n+1)e_{\bfi}$ is projective injective.
\item[(ii)] The Loewy length of $\GG(n+1)e_{\bfi}$ is $m$.
\item[(iii)] $H_C^{\bfi}$ is complete.
\end{itemize}
\end{lemma}
\begin{proof}
It follows from Lemma ~\ref{boundsubq} that $(Q(n+1), \rho^{\bbfg, \perp}(n+1))$ is a full bound subquiver of $(\olQ(n), \olrho^{\bbfg, \perp}(n))$, the bound quiver of $\olG(n)= {\olL(n)^!}^{op}$, and it is an $(m-2)$-translation quiver.
Thus $\GG(n+1) \simeq \olG(n)/I'(t)$ where $i'(t)$ is the ideal of $\olG(n)$ generated by the set $\{e_{\bfi} \mid \bfi \in \olQ(n)_0\setminus Q(n+1)(t)_0 \}$.
Since it is the quadratic dual of $(n+1,m-1)$-Koszul algebra, it is an $(m-1,n+1)$-Koszul algebra, hence an $(m-2)$-translation algebra.

Thus $\GG(n+1)e_{\bfi}$ is projective injective if and only if its Loewy length is $m$, and (i) is equivalent to (ii).

Note that all the bound path from $\bfi$ are inside $H_C^{\bfi}$, and an bound path of length $m-1$ from $\bfi$ ends at $\bbvi{\bfb(\bfi)}$, thus the Loewy length of $\GG(n+1)e_{\bfi}$ is $m$ if and only if $\bbvi{\bfb(\bfi)}$ is in $H_C^{\bfi}$, that is $H_C^{\bfi}$ is complete.
This proves that (ii) is  is equivalent to (iii).
\end{proof}

Note that the $n$-translation of $\LL(n+1)$ is defined by $\tau_{n}^{-1} \bfi = \bfi+\bfe$ and the $(m-2)$-translation of $\GG(n+1)$ is defined by $\tau_{[m-2]}^{-1}\bfi =\bbvi{\bfb(\bfi)}$.
Then we have the following result:
\begin{theorem}\label{nalmost}
$\GG(n+1)$ is an $(m-2)$-translation algebra.

$\add\GG(n+1)$ has $n$-almost split sequence.

$\add\LL(n+1)$ has $(m-3)$-almost split sequence.
\end{theorem}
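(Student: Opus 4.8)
The first statement I would obtain exactly as in Theorem~\ref{cuboidtruncation}(c). By Theorem~6.2 of \cite{gu14} the algebra ${\olL(n+1)^!}^{op}$ is self-injective of Loewy length $m$, hence an $(m-2)$-translation algebra. As recorded just before the statement, the Koszul complexes present $\GG(n+1)$ as a full bound subquiver truncation of ${\olL(n+1)^!}^{op}$, so Lemma~\ref{subbdquiver} yields $\GG(n+1)\simeq{\olL(n+1)^!}^{op}/J$ with $J$ generated by the idempotents at the deleted vertices. Tensoring the minimal projective resolutions of the simple ${\olL(n+1)^!}^{op}$-modules with ${\olL(n+1)^!}^{op}/J$, as in the proof of Theorem~\ref{cuboidtruncation}(c), shows that $\GG(n+1)$ is $(m-1,q)$-Koszul and that $Q(n+1)$ with the inherited translation is an $(m-2)$-translation quiver.

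For the second statement the plan is to display $\add\GG(n+1)$ as a higher almost split category and to read the $n$-almost split sequences off the minimal projective resolutions of the simple $\GG(n+1)$-modules. Via the Koszul complexes these resolutions are controlled by the $(n+1)$-cubic cells $H^{\bfi}$, which by Lemma~\ref{projrefine} record the radical filtration of $\LL(n+1)e_{\bfi}$: for each $\bfi$ the resolution of $S(\bfi)$ has length $n+1$, its $t$-th term being $\bigoplus_{\bfa\in U_t\sk{n+1}(\bfi)}\GG(n+1)e_{\bvi{\bfa}}$. Lemma~\ref{prohammockG} identifies, as the objects carrying no almost split sequence, exactly the $\GG(n+1)$-modules with $H_C^{\bfi}$ complete (the projective-injective ones), and Lemma~\ref{cuboidcube} provides the decisive boundary input: when $H^{\bfi}$ is complete the end vertex of $H_C^{\bbvi{\bfe}}$ is absent, so the length-$(n+1)$ resolution closes into an $(n+2)$-term complex of projectives. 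Deleting its simple top gives the candidate $n$-almost split sequence ending at $\GG(n+1)e_{\bfi}$, with higher translate induced by the translation of $Q(n+1)$. As a consistency check, $\GG(n+1)^{op}=\LL(n+1)^!$ is, for the choice $\tilde{\bfg}$, the $(n+1)$-Auslander absolutely $(n+1)$-complete algebra $T\sk{n+1}_m(k)$ by Proposition~\ref{abs}, for which such sequences are expected.

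For the third statement I would run the same argument with the roles of $H^{\bfi}$ and $H_C^{\bfi}$ (equivalently, of the two Loewy directions) interchanged, using that $\LL(n+1)$ and $\GG(n+1)$ are Koszul duals sharing the self-injective cover $\olL(n+1)$. Here the minimal projective resolutions of the simple $\LL(n+1)$-modules are encoded by the hammocks $H_C^{\bfi}$, see~\eqref{bhprtmlnpo}, while the projective-injective $\LL(n+1)$-modules are those with $H^{\bfi}$ complete (Lemma~\ref{prohammockL}); the two completeness conditions are again matched by Lemma~\ref{cuboidcube}. The $(m-3)$-almost split sequences are then the pieces of these resolutions cut out between the two boundaries, the boundary bookkeeping trimming the naive hammock depth $m-1$ down to the effective width $m-3$.

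The hard part will be verifying the universal property: that the complexes constructed above are genuinely $n$- (respectively $(m-3)$-) almost split, and not merely exact in $\add$, so that they are minimal and satisfy the higher lifting and factorisation conditions. This is where the defining axioms of the admissible translation quiver must be converted into the category-theoretic almost split conditions, and where Lemma~\ref{cuboidcube} has to be used to pin down precisely which indecomposable projectives carry a sequence and to fix the correct dimension — the asymmetry between the trim $n=(n+1)-1$ for $\GG(n+1)$ and $m-3=(m-1)-2$ for $\LL(n+1)$ reflecting the finite global dimension of the Auslander-side algebra against the periodicity of the pyramid-side algebra.
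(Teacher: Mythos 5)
Your first assertion is handled essentially the same way the paper does it: $\GG(n+1)$ is a truncation of the self-injective algebra $\olG(n)={\olL(n)^!}^{op}$ by an ideal generated by idempotents at the deleted vertices, and Theorem 6.2 of \cite{gu14} plus the truncation argument of Theorem \ref{cuboidtruncation}(c) give the $(m-2)$-translation structure. That part is fine.

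The gap is in the second and third assertions, which are the substance of the theorem. Your plan is to build candidate sequences out of the Koszul-dual projective resolutions (cubic cells $H^{\bfi}$ for $\GG(n+1)$, hammocks $H_C^{\bfi}$ for $\LL(n+1)$), use Lemmas \ref{prohammockL}, \ref{prohammockG} and \ref{cuboidcube} to decide which indecomposable projectives carry a sequence, and then verify the almost-split universal property directly from the admissibility axioms. But you explicitly defer that verification (``the hard part will be verifying the universal property'') and never carry it out, so the proposal does not actually prove either assertion. This is precisely the step the paper does not redo by hand: it invokes Theorem 7.2 of \cite{gu14}, the general theorem converting a translation-algebra structure into higher almost split sequences in $\add$ of the corresponding algebra, applied with the identifications $\tau_{n}^{-1}\bfi=\bfi+\bfe$ for $\LL(n+1)$ and $\tau_{[m-2]}^{-1}\bfi=\bbvi{\bfb(\bfi)}$ for $\GG(n+1)$, together with Lemma \ref{prohammockL} to locate the projective-injective vertices. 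Without citing that theorem, your route amounts to reproving it from conditions (i)--(iii) of admissibility, a substantial argument that your sketch only gestures at. In particular, your account of why the indices are $n$ and $m-3$ --- ``boundary bookkeeping trimming the naive hammock depth $m-1$ down to the effective width $m-3$'' --- is not an argument; in the paper these indices are forced by the cited theorem applied to the $n$-translation structure of $\LL(n+1)$ and the $(m-2)$-translation structure of $\GG(n+1)$ respectively.
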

\begin{proof}
The first assertion follows from Lemma ~\ref{prohammockG}.

Since $\GG(n+1)$ is the quadratic dual of $\LL(n+1)$ and $\LL(n+1)$ is an $n$-translation algebra, $\GG(n+1)$ is a partial Artin-Schelter $n$-regular
algebra, by Theorem 6.1 of \cite{gu14}.
Similarly, $\LL(n+1)$ is a partial Artin-Schelter $(m-2)$-regular algebra.
The last two assertions follow from Theorem 7.2 of \cite{gu14}.
\end{proof}

The following proposition gives the relationship between the quadratic dual $\GG(n+1)$ of $\LL(n+1)$ and Iyama's  absolutely $(n + 1 )$-complete algebra.
\begin{proposition}\label{compabsnalg}$\GG^{\bbfg}(n+1) = T_m\sk{n+1}(k)$ for some $\bbfg$.
\end{proposition}
\begin{proof}
Starting with the  quiver \eqref{quiver0}, 
we have that $\GG(1)=kQ(1)^{op}$.
Consider the quiver $Q\sk{n+1}$ defined in Definition 6.11 of \cite{i4}.
For $x \in Q(1)_0$, we  have   $l_x+x=m+1$ for $x\in Q(0)_0$, since $\tau^{m+1-x} I(x)=0$ for the injective $kQ$-module $I(x)$ corresponding to the vertex $x$.
One sees easily, $\iota: \bfi \to \bfi - \bfe_2- \cdots - \bfe_{n+1}$ defines a bijection from $Q(n+1)_0$ to $Q\sk{n+1}_0$,  and the arrows are reversed under this bijection.
So $Q(n+1)$ is the opposite quiver of $Q\sk{n+1}$.

Choosing $\bbfg$ in \eqref{perprelation} such that $d_{s,t,\bfi} =-1$ for all $s,t, \bfi $ with  $ \bfi,\bfi(t), \bfi({s}), \bfi(t)({s})\in Q(n+1)_0$, that is for each square with vertex set $\bfi,\bfi(t), \bfi({s}), \bfi(t)({s})$, we get a commutative relation for each square in $Q(n+1)$ and a zero relation for each half square in $Q(n+1)$.
\eqref{perprelation} is the set of the opposites of relations given in  Theorem 6.12 of \cite{i4} for the above quiver $Q\sk{n+1}$.
Thus $(Q(n+1), \rho^{\bbfg,\perp})$ is exactly the quiver and relations defining $T_m\sk{n+1}(k)$ and $\GG^{\bbfg}(n+1) \simeq T_m\sk{n+1}(k)$.
\end{proof}

By the above proposition, we have the following realization of Iyama's cone construction of absolutely $(n + 1 )$-complete algebras from an absolutely $n$-complete algebra.

\begin{theorem}\label{abscone}
There exist $\bfg $ such that $\LL^{\bfg}(n) = {T_m\sk{n}}^{!op}(k)$ is the pyramid $n$-cubic algebra defined in Proposition ~\ref{abs}, and there is an $(n+1)$-cuboid completion $\LL^{\bbfg}(n+1)$ of $\LL^{\bfg}(n) $ such that $ {T_m\sk{n+1}}(k) \simeq {\LL^{\bbfg}}^{!op}(n+1)$.
\end{theorem}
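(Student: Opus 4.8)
The plan is to combine Proposition \ref{abs}, read at level $n+1$, with the $(n+1)$-cuboid completion of Section 4. Applied at level $n+1$, Proposition \ref{abs} identifies $T_m\sk{n+1}(k)$ with the quadratic dual of the pyramid $(n+1)$-cubic algebra $\LL^{\bbfg}(n+1)$ attached to the \emph{special sign datum}, namely the one whose structure constants $d_{s,t,\bfi}$ in \eqref{therelation} are all equal to $-1$ (equivalently $\tilde g_s(\gkk{t}{\bfi}) = -\gkk{t}{\tilde g_s(\bfi)}$ for $t<s\le n+1$). On the other hand, applying the quadratic-dual involution to Proposition \ref{abs} at level $n$ shows that the hypothesis $\LL^{\bfg}(n) = {T_m\sk{n}}^!(k)$ forces $\bfg = \tilde\bfg$ to be the level-$n$ special datum. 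Hence it suffices to produce an $(n+1)$-cuboid completion $\LL^{\bbfg}(n+1)$ of $\LL^{\tilde\bfg}(n)$ whose defining bound quiver is exactly $(Q(n+1),\rho^{\bbfg}(n+1))$ with $\bbfg$ of special sign type; then ${\LL^{\bbfg}}^!(n+1)\simeq T_m\sk{n+1}(k)$ by Proposition \ref{abs}.

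First I would start from $\bfg=\tilde\bfg$ and form, as in Section 2, the twisted trivial extension $\tL^{\bbfg}(n)$ and the smash product $\olL^{\bbfg}(n)=\tL^{\bbfg}(n)\#\mathbb Z^*$, where $\bbfg=(\tilde\bfg,g')$ and $g'$ is the graded endomorphism induced by the $(n-1)$-translation of Theorem \ref{traa}. By Theorem \ref{cuboidtruncation} the $(n+1)$-cuboid truncation $\LL(n+1)=\LL^{\bbfg}(n+1)$ is a well-defined $n$-translation algebra, and by Lemma \ref{boundsubq} together with Corollary \ref{cboundsubq} its bound quiver is the full bound subquiver $(Q(n+1),\olrho(n)[Q(n+1)])$ of $(\olQ(n),\olrho(n))$, so the underlying quiver is already the pyramid $(n+1)$-cubic quiver.

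Next I would match the restricted relation set $\olrho(n)[Q(n+1)]$ with $\rho^{\bbfg}(n+1)$ from \eqref{therelation} at level $n+1$. Reading off \eqref{barrelation} under the coordinate identification $\bfe_{n+1}=(\mathbf 0\sk{n},1)$, the relations split into three families: the level-$n$ relations $\rho^{\tilde\bfg}\times\mathbb Z$ supply all commutative relations $d_{s,t,\bfi(s)}\gkk{t}{\bfi(s)}\gkk{s}{\bfi}-\gkk{s}{\bfi(t)}\gkk{t}{\bfi}$ and all zero relations of types $t<s\le n$; the second family is the type-$(n+1)$ zero relation $\gkk{n+1}{\bfi(n+1)}\gkk{n+1}{\bfi}$; and the third family, with $\bfi(n+1)=\bfi-\bfe_n+\bfe_{n+1}$ and $\bfi(s)=\bfi-\bfe_{s-1}+\bfe_s$, is precisely the commutative relation $d_{n+1,s,\bfi(n+1)}\gkk{s}{\bfi(n+1)}\gkk{n+1}{\bfi}-\gkk{n+1}{\bfi(s)}\gkk{s}{\bfi}$ pairing type $n+1$ with each $s\le n$. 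These are exactly the two families listed in $\rho^{\bbfg}(n+1)$, so $\olrho(n)[Q(n+1)]=\rho^{\bbfg}(n+1)$, the only structure constants not yet pinned to $-1$ being the $d_{n+1,s,\cdot}$ coming from $g'$.

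The remaining and, I expect, hardest step is to arrange that $\bbfg$ is of special sign type, i.e.\ that the $d_{n+1,s,\cdot}$ can all be taken equal to $-1$. Since $g'$ is fixed only up to the Nakayama-type twist of Theorem \ref{traa}, I would rescale the type-$(n+1)$ arrows $\gkk{n+1}{\bfi}\mapsto c_{\bfi}\gkk{n+1}{\bfi}$ by nonzero scalars $c_{\bfi}$; this is an algebra isomorphism preserving the zero relations and sending the constant $d_{n+1,s,\bfi(n+1)}$ to $d_{n+1,s,\bfi(n+1)}\,c_{\bfi}/c_{\bfi(s)}$. Solving $c_{\bfi(s)}/c_{\bfi}=-d_{n+1,s,\bfi(n+1)}$ amounts to a cocycle/holonomy condition on the $(n+1)$-cubic cells: the products of these ratios around each cell must close. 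This closes because, by Lemma \ref{pathlengthl}, any two bound paths between fixed vertices of length $\le n+1$ are linearly dependent, so each cell carries a single independent commutativity relation whose scalar can be absorbed, and the $d$'s coming from a genuine endomorphism $g'$ are automatically compatible around the already-commuting faces of types $\le n$. Once the $c_{\bfi}$ are chosen, $\LL^{\bbfg}(n+1)$ is identified with the special pyramid $(n+1)$-cubic algebra, and Proposition \ref{abs} at level $n+1$ yields ${\LL^{\bbfg}}^!(n+1)\simeq T_m\sk{n+1}(k)$, as required.
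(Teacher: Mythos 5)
Your overall strategy is the one the paper intends: the paper states Theorem \ref{abscone} without any written proof, presenting it as a corollary of the Section 4 machinery (Lemma \ref{boundsubq}, Corollary \ref{cboundsubq}, Theorem \ref{cuboidtruncation}) combined with Proposition \ref{abs} read at level $n+1$. Your reduction of the hypothesis to $\bfg=\tilde{\bfg}$, the identification of the completion's quiver with $Q(n+1)$, and the bookkeeping that matches $\olrho(n)[Q(n+1)]$ with the three families of relations in $\rho^{\bbfg}(n+1)$ are all correct and are exactly the unwritten content of the paper's assertion.

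The gap is in the step you yourself flag as hardest. Your rescaling argument needs two facts: (a) the cocycle condition $d_{n+1,s,\bfi}\,d_{n+1,t,\bfi(s)} = d_{n+1,t,\bfi}\,d_{n+1,s,\bfi(t)}$ on each square face of the constraint graph (vertices $=$ sources of type-$(n+1)$ arrows, edges $=$ the $(s,n+1)$ commutation relations), and (b) that every cycle of this graph lies in the span of such complete square faces. You essentially get (a) from the observation that $g'$ is a genuine endomorphism, since preserving the level-$n$ commutation relation at $\bfi$ is literally that identity --- but this is only forced at vertices where both the $(s,t)$-relation and the relevant translated data exist, and near the boundary of the pyramid, where cells are incomplete, it is not. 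Fact (b) is never argued at all: Lemma \ref{pathlengthl} gives linear dependence of parallel bound paths, which is not the same as the combinatorial simple-connectivity of the constraint graph that holonomy-triviality requires, and incomplete cells are precisely where a cycle could fail to decompose into squares carrying relations. Note, moreover, that the whole detour is avoidable: since the hypothesis forces all level-$n$ constants $d_{s,t,\bfi}$ to equal $-1$, the specific choice of twist with $d_{n+1,s,\bfi}\equiv -1$ preserves every relation of $\rho^{\tilde{\bfg}}(n)$ (the signs are constant, hence invariant under $\bfi\mapsto\bfi-\bfe_n$), so it is itself an admissible $g'$; with this choice the restricted relation set of the cuboid completion is on the nose $\rho^{\tilde{\bfg}}(n+1)$ at level $n+1$, and Proposition \ref{abs} concludes without any normalization argument. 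Replacing your third step by this direct choice would make the proof complete.
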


{}
\end{document}